\theoremstyle{plain}
\newtheorem{theo}{Theorem}[section]
\newtheorem{lemm}[theo]{Lemma}
\newtheorem{prop}[theo]{Proposition}
\newtheorem{coro}[theo]{Corollary}
\theoremstyle{definition}
\newtheorem{defi}[theo]{Definition}
\newtheorem{nota}[theo]{Notation}
\theoremstyle{remark}
\newtheorem{rem}[theo]{Remark}
\numberwithin{equation}{section}
\newcommand{\op}{^{\mathrm{op}}}
\newcommand{\cat}{\mathbf}
\newcommand{\oper}{\mathcal}
\newcommand{\on}{\mathrm}
\newcommand{\Emb}{\on{Emb}}
\newcommand{\Map}{\on{Map}}
\newcommand{\Mod}{\cat{Mod}}
\newcommand{\oMod}{\oper{M}od}
\newcommand{\oMfld}{\oper{M}an}
\newcommand{\Mfld}{\cat{Man}}
\newcommand{\un}{\mathbb{I}}
\newcommand{\id}{\mathrm{id}}
\renewcommand{\S}{\cat{S}}
\newcommand{\Spec}{\cat{Spec}}
\newcommand{\goto}[1]{\stackrel{#1}{\longrightarrow}}
\newcommand{\HC}[1]{\mathrm{HH}_{#1}}
\newcommand{\HH}[1]{\mathrm{HH}^{#1}}
\renewcommand{\L}{\mathbb{L}}
\title[Factorization homology and calculus]{Factorization homology and calculus à la Kontsevich Soibelman}
\author{Geoffroy Horel}
\begin{document}

\address{Mathematisches Institut \\
Einsteinstrasse 62\\
D-48149 Münster\\
Deutschland}
\email{geoffroy.horel@gmail.com}
\thanks{The author was partially supported by an NSF grant.}
\keywords{factorization homology, non-commutative calculus, Hoschschild (co)homology, little disk operad, swiss-cheese operad}
\subjclass[2010]{18G55, 16E40, 18D50, 14A22, 55P48}

\begin{abstract}
We use factorization homology over manifolds with boundaries in order to construct operations on Hochschild cohomology and Hochschild homology. These operations are parametrized by a colored operad involving disks on the surface of a cylinder defined by Kontsevich and Soibleman. The formalism of the proof extends without difficulties to a higher dimensional situation. More precisely, we can replace associative algebras by algebras over the little disks operad of any dimensions, Hochschild homology by factorization (also called topological chiral) homology and Hochschild cohomology by higher Hochschild cohomology. Note that our result works in categories of chain complexes but also in categories of modules over a commutative ring spectrum giving interesting operations on topological Hochschild homology and cohomology.
\end{abstract}

\maketitle

\tableofcontents

Let $A$ be an associative algebra over a field $k$. A famous theorem by Hochschild Kostant and Rosenberg (see \cite{hochschilddifferential}) suggests that the Hochschild homology of $A$ should be interpreted as the graded vector space of differential forms on the non commutative space ``$\on{Spec} A$''. Similarly, the Hochschild cohomology of $A$ should be interpreted as the space of polyvector fields on $\on{Spec} A$.

If $M$ is a smooth manifold, let $\Omega_*(M)$ be the (homologically graded) vector space of de Rham differential forms and $V^*(M)$ be the vector space of polyvector fields (i.e. global sections of the exterior algebra on $TM$). This pair of graded vector spaces supports the following structure:

\begin{itemize}
\item The de Rham differential : $d:\Omega_*(M)\to\Omega_{*-1}(M)$.

\item The cup product of vector fields : $-.-:V^i(M)\otimes V^j(M)\to V^{i+j}(M)$.

\item The Schouten-Nijenhuis bracket : $[-,-]:V^i\otimes V^j\to V^{i+j-1}$.

\item The cap product : $\Omega_i\otimes V^j\to \Omega_{i-j}$ denoted by $\omega\otimes X\mapsto i_X\omega$.

\item The Lie derivative : $\Omega_i\otimes V^j\to\Omega_{i-j+1}$ denoted by $\omega\otimes X\mapsto L_X\omega$.
\end{itemize}

This structure satisfies some properties:

\begin{itemize}
\item The de Rham differential is indeed a differential, i.e. $d\circ d=0$.

\item The cup product and the Schouten-Nijenhuis bracket make $V^*(M)$ into a Gerstenhaber algebra. More precisely, the cup product is graded commutative and the bracket satisfies the Jacobi identity and is a derivation in each variable with respect to the cup product.

\item The cap product and the Lie derivative make $\Omega_*(M)$ into a Gerstenhaber $V^*(M)$-module.

\end{itemize}

The Gerstenhaber module structure means that the following formulas are satisfied
\begin{align*}
L_{[X,Y]}&=[L_X,L_Y]\\
i_{[X,Y]}&=[i_X,L_Y]\\
i_{X.Y}&=i_Xi_Y\\
L_{X.Y}&=L_Xi_Y+(-1)^{|X|}i_XL_Y
\end{align*}
where we denote by $[-,-]$, the (graded) commutator of operators on $\Omega_*(M)$.

Finally we have the following formula called Cartan's formula relating the Lie derivative, the exterior product and the de Rham differential:
\[L_X=[d,i_X]\]

Note that there is even more structure available in this situation. For example, the de Rham differential forms are equipped with a commutative differential graded algebra structure. However we will ignore this additional structure since it is not available in the non commutative case.

There is an operad $\oper{C}alc$ in graded vector spaces such that a $\oper{C}alc$-algebra is a pair $(V^*,\Omega_*)$ together with all the structure we have just mentioned.

It turns out that any associative algebra gives rise to a $\oper{C}alc$-algebra pair:

\begin{theo}
Let $A$ be an associative algebra over a field $k$, let $\HH{}_*(A)$ (resp. $\HC{}^*(A)$) denote the Hochschild homology (resp. cohomology) of $A$, then the pair $(\HC{}^*(A),\HH{}_*(A))$ is an algebra over $\oper{C}alc$.
\end{theo}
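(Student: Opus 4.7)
The plan is to exhibit the pair $(\HC{}^*(A), \HH{}_*(A))$ as an algebra over a two-colored topological operad $\oper{KS}$ of Kontsevich--Soibelman type, whose operations are parametrized by configurations of disks on a cylinder, and then to invoke the known quasi-isomorphism between the chain operad of $\oper{KS}$ and the algebraic calculus operad $\oper{C}alc$.

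First, I would identify both factors of the pair with factorization-homology invariants of $A$. Factorization homology of $A$ over the circle $S^1$ computes Hochschild homology $\HH{}_*(A)$, while Hochschild cohomology $\HC{}^*(A)$ is the derived center of $A$, carrying a framed $E_2$-algebra structure by Deligne's conjecture. Both arise simultaneously when one considers factorization homology over the half-open cylinder $Y = S^1 \times [0,1)$: the interior of $Y$ is decorated by $\HC{}^*(A)$ as an $E_2$-algebra (equivalently, by $A$ through its $E_1$-structure), and the boundary circle $S^1 \times \{0\}$ by $A$ viewed as an $A$-bimodule, yielding $\int_Y A \simeq \HH{}_*(A)$.

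Second, using the functoriality of factorization homology with respect to embeddings of bordered manifolds, one obtains an action on $(\HC{}^*(A), \HH{}_*(A))$ of the two-colored topological operad whose space of operations with $n$ interior and $m$ boundary inputs is the space of embeddings of $n$ disks into the cylinder together with $m$ bordered half-disks on the boundary circle. Up to homotopy, this is precisely the Kontsevich--Soibelman cylinder operad $\oper{KS}$. The two outputs of $\oper{KS}$ (closed vs.\ boundary) recover respectively the Deligne-type $E_2$-action on $\HC{}^*(A)$ and the mixed action of $\HC{}^*(A)$ on $\HH{}_*(A)$.

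Third, and this is the principal obstacle, one must identify the chain operad $C_*(\oper{KS})$ with $\oper{C}alc$. This formality statement --- conjectured by Kontsevich and Soibelman and proved in detail by Dolgushev, Tamarkin, and Tsygan --- allows one to pull back the $\oper{KS}$-action of the previous step to a $\oper{C}alc$-algebra structure on $(\HC{}^*(A), \HH{}_*(A))$. The geometric incarnation of the various calculus operations is transparent on the cylinder: the de Rham differential corresponds to rotation of the boundary circle, the cup product and Gerstenhaber bracket to the standard $E_2$-structure on the interior, and the cap product and Lie derivative to the two ways an interior disk can be collided with a boundary half-disk, with Cartan's formula falling out of the standard homotopy between them.
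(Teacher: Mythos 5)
The paper does not prove this theorem: it appears in the introduction as classical background (due to Gerstenhaber, Rinehart, Daletski--Gelfand--Tsygan, Getzler, Nest--Tsygan and others), and the standard proof is purely algebraic, writing down explicit formulas on the Hochschild (co)chain complexes for the cup product, Gerstenhaber bracket, Connes operator $B$, cap product and Lie derivative, and checking the $\oper{C}alc$-relations directly. Your proposal instead derives the statement as a corollary of the paper's main chain-level theorem: build an action of the topological Kontsevich--Soibelman operad on a model of the pair via factorization homology over $S^1\times[0,1)$ and the swiss-cheese Deligne conjecture, and then recover $\oper{C}alc$ in homology. This is correct in outline --- it is essentially the trajectory of the rest of the paper --- but it is vastly heavier machinery than the classical algebraic argument requires. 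A small slip along the way: Deligne's conjecture for a plain associative algebra produces a non-framed $\oper{E}_2$-structure on $\HC{}^*(A)$, not a framed one; the circle rotation in $\oper{KS}$ lives entirely on the $m$-colored ($\HH{}_*$) sector, and indeed $\oper{KS}(a^{\boxplus n};a)$ is just $\oper{D}_2(n)$.

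There is one concrete misstep in the final reduction. To pass from a $\oper{KS}$-algebra in chain complexes to a $\oper{C}alc$-algebra in homology, all you need is the isomorphism of homology operads $H_*(\oper{KS})\cong\oper{C}alc$: taking homology of an algebra over a topological (or chain) operad always yields an algebra over the homology operad, with no further hypotheses. Instead you invoke \emph{formality} of $C_*(\oper{KS})$, a chain-level zig-zag of quasi-isomorphisms of operads with $\oper{C}alc$. That is a far deeper theorem, it rests on a choice of Drinfeld associator, and it only holds in characteristic zero --- whereas the statement you are asked to prove is over an arbitrary field $k$. Invoking formality here simultaneously proves more than is claimed (a chain-level $\oper{C}alc$-action) and silently imposes a restriction on the ground field that the theorem does not carry. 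Replace ``formality of $C_*(\oper{KS})$'' by ``identification of $H_*(\oper{KS})$ with $\oper{C}alc$, followed by taking homology'' and the argument goes through.
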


It is a natural question to try to lift this action to an action at the level of chains inducing the $\oper{C}alc$-action in homology. This is similar to Deligne conjecture which states that there is an action of the operad of little $2$-disks on Hochschild cochains of an associative algebra inducing the Gerstenhaber structure after taking homology.

Kontsevich and Soibelman in \cite{kontsevichnotes} have constructed a topological colored operad denoted $\oper{KS}$ whose homology is $\oper{C}alc$. The purpose of this paper is to construct an action of $\oper{KS}$ on the pair consisting of topological Hochschild cohomology and topological Hochschild homology. 

More precisely, we prove the following theorem:

\begin{theo}
Let $A$ be an associative algebra in the category of chain complexes over a commutative ring or in the category of modules over a commutative symmetric ring spectrum. Then there is an algebra $(C,H)$ over $\oper{KS}$ such that $C$ is weakly equivalent to the (topological) Hochshchild cohomology of $A$ and $H$ is weakly equivalent to the (topological) Hochschild homology of $A$.
\end{theo}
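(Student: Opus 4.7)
The plan is to realize the pair $(C,H)$ via factorization homology over a cylinder with boundary, and to identify $\oper{KS}$ with (a suboperad of) the endomorphism operad of this construction.

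First, I would define $H := \int_{S^1} A$, the factorization homology of the circle with coefficients in $A$ viewed as an $E_1$-algebra via its associative algebra structure. The standard comparison between factorization homology over the circle and Hochschild homology then yields $H\simeq \HH{}_*(A)$. For $C$, I would take the derived endomorphism algebra of $A$ as an $A$-bimodule, which computes topological Hochschild cohomology $\HC{}^*(A)$ and, by Deligne's theorem, carries a canonical $E_2$-action; equivalently, $C$ may be presented as a factorization-homology-type construction on a disk with boundary data encoding the $A$-bimodule structure.

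The main construction of the $\oper{KS}$-action proceeds by interpreting operations in $\oper{KS}$ geometrically as configurations on the surface of a cylinder $S^1\times[0,1]$. Concretely, an operation with $p$ inputs of the ``open'' color and $q$ inputs of the ``closed'' color corresponds to a configuration of $p$ small disks embedded in the interior together with $q$ disjoint thin sub-cylinders isotopic to $S^1\times\{*\}$, all compatible with framings. Each such configuration defines an embedding of a disjoint union of disks and cylinders into the cylinder, and the functoriality of factorization homology for embeddings of framed manifolds with boundary turns this into a morphism $C^{\otimes p}\otimes H^{\otimes q}\to H$. Compatibility with composition of configurations is then compatibility with operadic composition in $\oper{KS}$, so these morphisms assemble into the desired action.

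The principal obstacle is to identify Kontsevich and Soibelman's combinatorial model of $\oper{KS}$ with the geometric operad of such configurations of disks and sub-cylinders in the cylinder, up to operadic weak equivalence rather than merely on homology. This requires a scanning or configuration-space argument to match spaces of operations, together with careful handling of framings along the vertical direction and along the boundary circle; the established computation $H_*(\oper{KS})\cong\oper{C}alc$ serves as a consistency check. Once the operad comparison is in place, passage from chain complexes to modules over a commutative symmetric ring spectrum is formal: the entire argument is carried out inside a symmetric monoidal $(\infty,1)$-category using the model-independent formulation of factorization homology established earlier in the paper, and therefore applies uniformly in both target categories.
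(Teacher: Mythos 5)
Your overall strategy matches the paper's: define $H=\int_{S^1}A$, take $C$ to be derived bimodule endomorphisms of $A$, replace $\oper{KS}$ by a geometrically-defined equivalent operad of framed embeddings of disks and a sub-cylinder into $S^1\times[0,1)$, and read the action off of the symmetric monoidal functoriality of factorization homology on framed manifolds with boundary. This is indeed what the paper does, and your recognition that the key comparison step is a weak equivalence of operads (not merely an isomorphism on homology) is correct; the paper establishes this by fibering over configuration spaces.

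However, there is a genuine gap in how you initialize the construction. To evaluate factorization homology on a manifold \emph{with boundary} such as $S^1\times[0,1)$, the coefficient datum must be an algebra over the two-colored swiss-cheese operad $\oper{E}_2^\partial$, i.e.\ a pair $(B,A)$ in which $B$ is an $\oper{E}_2$-algebra acting on the $\oper{E}_1$-algebra $A$. Your input is just the associative algebra $A$ together with the one-colored $E_2$-structure on $C=\HC{}^*(A)$ furnished by the classical Deligne conjecture. That is not enough: the construction $M\mapsto\int_M(B,A)$ is a symmetric monoidal functor out of $f\Mfld_2^\partial$, and the identification $\int_D(B,A)\cong B$ forces $B$ to be part of a swiss-cheese pair from the outset. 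The missing ingredient, which the paper explicitly flags as crucial, is the swiss-cheese version of Deligne's conjecture (Thomas, \cite{thomaskontsevich}): for any $\oper{E}_1$-algebra $A$ there is an $\oper{E}_2^\partial$-algebra $(B',A')$ with $B'\simeq\HC{}^*(A)$ and $A'\simeq A$. Without this upgrade from a one-colored $E_2$-action to a two-colored $\oper{E}_2^\partial$-action, the phrase ``functoriality of factorization homology for embeddings of framed manifolds with boundary'' does not produce any maps $C^{\otimes p}\otimes H\to H$, because there is no functor to be functorial about. You gesture at this with ``boundary data encoding the $A$-bimodule structure,'' but that is precisely the nontrivial theorem that needs to be invoked.

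Two smaller points. First, the operad $\oper{KS}$ has operation spaces $\oper{KS}(a^{\boxplus n}\boxplus m;m)$ with \emph{exactly one} module-colored input; your picture of ``$q$ disjoint thin sub-cylinders'' with arbitrary $q$ is not an operation in $\oper{KS}$, since only one embedded $S^1\times[0,1)$ can carry the boundary of the ambient half-open cylinder, and in any case the colored-operad structure permits only a single $m$ input. Second, the paper's formulation is genuinely model-categorical (operadic left Kan extensions via coends over the PROP, with the Berger--Moerdijk/Pavlov admissibility results), not a model-independent $(\infty,1)$-categorical argument as you suggest, so the claim that passage between chain complexes and ring spectra is ``formal'' rests on the specific results \ref{theo-operads in ring spectra}, \ref{theo-Quillen equivalence}, and \ref{invariance of operadic coend} rather than on abstract $\infty$-categorical invariance.
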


We also prove a generalization of the above theorem to $\oper{E}_d$-algebras. Hochschild cohomology should be replaced by the derived endomorphisms of $A$ seen as an $\oper{E}_d$-module over itself and Hochschild homology should be replaced by factorization homology (also called chiral homology). We construct obvious higher dimensional analogues of the operad $\oper{KS}$ and show that they describe the action of higher Hochschild cohomology on factorization homology. 

The crucial ingredients in the proof is the swiss-cheese version of Deligne's conjecture (see \cite{thomaskontsevich} or \cite{ginotnotes}) and a study of factorization homology on manifolds with boundaries as defined in \cite{ayalastructured}. 

Note that one could imagine fancier versions of our main theorem using manifolds with corners instead of manifolds with boundaries (the relevant background can be found in \cite{ayalastructured} and \cite{calaquearound}). 

\subsection*{Plan of the paper}

\begin{itemize}
\item The first two sections are just background material about operads and model categories. We have proved the results whenever, we could not find a proper reference, however, this material makes no claim of originality. 

\item The third section is a definition of the little $d$-disk operad and the swiss-cheese operad. Again it is not original and only included to fix notations. 

\item The fourth and fifth sections are devoted to the construction of the operads $\oper{E}_d$ and $\oper{E}_d^\partial$. These are smooth versions of the little $d$ disk operad and the swiss-cheese operad. 

\item We show in the sixth section that $\oper{E}_d$ and $\oper{E}_d^\partial$ are weakly equivalent to the little $d$ disk operad and the swiss-cheese operad. 

\item In the seventh section we construct factorization homology of $\oper{E}_d$ and $\oper{E}_d^\partial$ algebras over a manifold (with boundary in the case of $\oper{E}_d^\partial$) and prove various useful results about it. 

\item In the eighth section, we construct a smooth analogue of the operad $\oper{KS}$ as well as its higher dimensional versions. 

\item Finally in the last section we construct an action of these operads on the pair consisting of higher Hochschild cohomology and factorization homology.
\end{itemize}

\subsection*{Acknowledgements}

This paper is part of the author's Ph.D. thesis at MIT. This work benefited a lot from conversations with Haynes Miller, Clark Barwick, David Ayala, Ricardo Andrade, John Francis and Luis Alexandre Pereira.
\subsection*{Conventions}

In this paper, we denote by $\cat{S}$ the category of simplicial set with its usual model structure. All our categories are implicitely assumed to be enriched in simplicial sets and all our functors are functors of simplicially enriched categories. We use the symbol $\simeq$ to denote a weak equivalence and $\cong$ to denote an isomorphism.

\section{Colored operad}

We recall the definition of a colored operad (also called a multicategory). In this paper we will restrict ourselves to the case of operads in $\S$ but the same definitions could be made in any symmetric monoidal category. Note that we use the word ``operad'' even when the operad has several colors. When we want to specifically talk about operads with only one color, we say ``one-color operad''.

\begin{defi}
An  \emph{operad} in the category of simplicial sets consists of
\begin{itemize}
\item a set of colors $\on{Col}(\oper{M})$

\item for any finite sequence $\{a_i\}_{i\in I}$ in $\on{Col}(\oper{M})$ indexed by a finite set $I$, and any color $b$, a simplicial set:
\[\oper{M}(\{a_i\}_I;b)\]

\item a base point $*\to\oper{M}(a;a)$ for any color $a$

\item for any map of finite sets $f:I\to J$, whose fiber over $j\in J$ is denoted $I_j$, compositions operations
\[\left(\prod_{j\in J}\oper{M}(\{a_i\}_{i\in I_j};b_j)\right)\times\oper{M}(\{b_j\}_{j\in J};c)\to\oper{M}(\{a_i\}_{i\in I};c)\]
\end{itemize}

All these data are required to satisfy unitality and associativity conditions (see for instance \cite[Definition 2.1.1.1]{luriehigher}).

A map of operads $\oper{M}\to\oper{N}$ is a map $f:\on{Col}(\oper{M})\to\on{Col}(\oper{N})$ together with the data of maps
\[\oper{M}(\{a_i\}_I;b)\to\oper{N}(\{f(a_i)\}_I;f(b))\]
compatible with the compositions and units.
\end{defi}

With the above definition, it is not clear that there is a category of operads since there is no set of finite sets. However it is easy to fix this by checking that the only data needed is the value $\oper{M}(\{a_i\}_{i\in I};b)$ on sets $I$ of the form $\{1,\ldots,n\}$. The above definition has the advantage of avoiding unnecessary identification between finite sets.

\begin{rem}\label{symmetric group action}
Note that the last point of the definition can be used with an automorphism $\sigma:I\to I$. Using the unitality and associativity of the composition structure, it is not hard to see that $\oper{M}(\{a_i\}_{i\in I};b)$ supports an action of the group $\on{Aut}(I)$. Other definitions of operads include this action as part of the structure.
\end{rem}

\begin{defi}
Let $\oper{M}$ be an operad. The underlying simplicial category of $\oper{M}$ denoted $\oper{M}^{(1)}$ is the simplicial category whose objects are the colors of $\oper{M}$ and with
\[\Map_{\oper{M}^{(1)}}(m,n)=\oper{M}(\{m\};n)\]
\end{defi}

\begin{nota}
Let $\{a_i\}_{i\in I}$ and $\{b_j\}_{j\in J}$ be two sequences of colors of $\oper{M}$. We denote by $\{a_i\}_{i\in I}\boxplus\{b_j\}_{j\in J}$ the sequence indexed over $I\sqcup J$ whose restriction to $I$ (resp. to $J$) is $\{a_i\}_{i\in I}$ (resp. $\{b_j\}_{j\in J}$).

For instance if we have two colors $a$ and $b$, we can write $a^{\boxplus n}\boxplus b^{\boxplus m}$ to denote the sequence $\{a,\ldots,a,b,\ldots,b\}_{\{1,\ldots,n+m\}}$ with n $a$'s and m $b$'s.
\end{nota}

Any symmetric monoidal category can be seen as an operad:

\begin{defi}
Let $(\cat{A},\otimes,\un_\cat{A})$ be a small symmetric monoidal category enriched in $\S$. Then $\cat{A}$ has an underlying  operad $\oper{U}\cat{A}$ whose colors are the objects of $A$ and whose spaces of operations are given by
\[\oper{U}\cat{A}(\{a_i\}_{i\in I};b)=\Map_{\cat{A}}(\bigotimes_{i\in I}a_i,b)\]
\end{defi}

\begin{defi}
We denote by $\cat{Fin}$ the category whose objects are nonnegative integers $n$ and whose morphisms $n\to m$ are maps of finite sets
\[\{1,\ldots,n\}\to\{1,\ldots,m\}\]

We allow ourselves to write $i\in n$ when we mean $i\in \{1,\ldots,n\}$.
\end{defi}

The construction $\cat{A}\mapsto \oper{U}\cat{A}$ sending a symmetric monoidal category to an operad has a left adjoint that we define now. We will use the boldface letter $\cat{M}$ to denote value of this left adjoint on $\oper{M}$. We will call it the PROP associated to $\oper{M}$.

\begin{defi}
Let $\oper{M}$ be an operad, the objects of the free symmetric monoidal category $\cat{M}$ are given by
\[\on{Ob}(\cat{M})=\bigsqcup_{n\in\on{Ob}(\cat{Fin})}\on{Col}(\cat{M})^n\]

Morphisms are given by
\[\cat{M}(\{a_i\}_{i\in n},\{b_j\}_{j\in m})
=\bigsqcup_{f:n\to m}\prod_{i\in m}\oper{M}(\{a_j\}_{j\in f^{-1}(i)};b_i)\]
\end{defi}

It is easy to check that there is a functor $\cat{M}^2\to\cat{M}$ which on objects is
\[(\{a_i\}_{i\in n},\{b_j\}_{j\in m})\mapsto \{a_1\ldots,a_n,b_1,\ldots,b_m\}\]
A straightforward verification shows that this functor indeed makes $\cat{M}$ into a symmetric monoidal simplicial category.

We denote by $\cat{Fin}_S$ the PROP associated to the initial operad with set of colors $S$. The category $\cat{Fin}_S$ is the category whose objects are pairs $(n,u)$ where $n\in\cat{Fin}$ and $u:n\to S$ is a map. A morphism from $(n,u)$ to $(m,v)$ only exists when $n=m$. In that case, it is the data of an isomorphism $\sigma:n\to n$ which is such that $u=v\circ\sigma$.

\bigskip
Let $\cat{C}$ be a symmetric monoidal simplicial category. For an element $X\in\cat{C}^{S}$ and $x=(n,u)\in\cat{Fin}_S$, we write
\[X^{\otimes x}=\bigotimes_{i\in n}X_{u(i)}\]
Then $x\mapsto X^{\otimes x}$ defines a symmetric monoidal functor $\cat{Fin}_S\to\cat{C}$. 

An $\oper{M}$-algebra in $\cat{C}$ is a map of  operads $\oper{M}\to \oper{U}\cat{C}$. By definition, an algebra over $\oper{M}$ induces a (symmetric monoidal) functor $\cat{M}\to\cat{C}$. We will use the same notation for the two objects and allow oursleves to switch between them without mentioning it. We denote by $\cat{C}[\oper{M}]$ the category of $\oper{M}$-algebras in $\cat{C}$.

Alternatively, we can consider the category $\cat{C}^{\on{Col}(\oper{M})}$ of tuples of elements of $\cat{C}$ indexed by the colors of $\oper{M}$. The operad $\oper{M}$ defines a monad on that category via the formula
\[\oper{M}(X)(c)=\on{colim}_{x\in\cat{Fin}_{\on{Col}(\oper{M})}}\oper{M}(x;c)\otimes X^{\otimes x}\]

The category of $\oper{M}$-algebras in $\cat{C}$ is then the category of algebras over the monad $\oper{M}$.

\subsection*{Right modules over operads}

\begin{defi}
Let $\oper{M}$ be an operad. A \emph{right $\oper{M}$-module} is a simplicial functor
\[R:\cat{M}\op\to \S\]
We denote by $\Mod_{\oper{M}}$ the category of modules over $\oper{M}$.
\end{defi}

\begin{rem}
If $\oper{O}$ is a one-color operad, it is easy to verify that the category of right modules over $\oper{O}$ in the above sense is isomorphic to the category of right modules over $\oper{O}$ in the usual sense (i.e. a right module over the monoid $\oper{O}$ with respect to the monoidal structure on symmetric sequences given by the composition product).
\end{rem}

The category of right modules over $\oper{M}$ has a convolution tensor product. Given $P$ and $Q$ two right modules over $\oper{M}$, we first define their exterior tensor product $P\boxtimes Q$ which is a functor $\cat{M}\op\times\cat{M}\op\to\S$ sending $(m,n)$ to $P(m)\times Q(n)$. The tensor product $P\otimes Q$ is then defined to be the left Kan extension along the tensor product $\mu:\cat{M}\op\times\cat{M}\op\to\cat{M}\op$ of the exterior tensor product $P\boxtimes Q$.

\begin{prop}\label{prop-monoidality of coend}
If $A$ is an $\oper{M}$-algebra, then there is an isomorphism
\[(P\otimes_{\cat{M}}A)\otimes (Q\otimes_{\cat{M}}A)\cong (P\otimes Q)\otimes_{\cat{M}}A\]
\end{prop}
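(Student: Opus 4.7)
The plan is to reduce both sides to the same iterated coend by manipulating the defining formulas. Throughout, $P\otimes_\cat{M}A$ denotes the coend $\int^{m\in\cat{M}} P(m)\otimes A(m)$ in $\cat{C}$, and the Day convolution is given by the Kan extension formula
\[(P\otimes Q)(c) \;\cong\; \int^{(a,b)\in\cat{M}\op\times\cat{M}\op} \cat{M}(c,a\otimes b)\times P(a)\times Q(b).\]
The three ingredients I expect to use are (i) Fubini for coends, (ii) the co-Yoneda / density formula $\int^{c} \cat{M}(c,x)\otimes A(c)\cong A(x)$, which is valid because $A$ is a functor $\cat{M}\to\cat{C}$, and (iii) the symmetric monoidality of $A$, which gives $A(a\otimes b)\cong A(a)\otimes A(b)$.

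The computation of the right-hand side goes as follows. Starting with
\[(P\otimes Q)\otimes_\cat{M}A \;=\; \int^{c}(P\otimes Q)(c)\otimes A(c),\]
I would substitute the coend formula for $(P\otimes Q)(c)$ and then use Fubini to pull the $c$-integral inside, producing
\[\int^{(a,b)} P(a)\times Q(b)\otimes\left(\int^{c}\cat{M}(c,a\otimes b)\otimes A(c)\right).\]
The inner coend collapses to $A(a\otimes b)$ by co-Yoneda, and then the symmetric monoidal structure on $A$ identifies this with $A(a)\otimes A(b)$, yielding
\[\int^{(a,b)} P(a)\otimes A(a)\otimes Q(b)\otimes A(b).\]
Splitting this iterated coend and using that the tensor product of $\cat{C}$ commutes with colimits in each variable rewrites the expression as $(P\otimes_\cat{M}A)\otimes(Q\otimes_\cat{M}A)$, which is the left-hand side.

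I do not expect a serious obstacle; the argument is purely formal. The only points that require some care are (a) the fact that the co-Yoneda lemma holds in the simplicially enriched setting for $A$ viewed as an enriched functor $\cat{M}\to\cat{C}$, which needs $\cat{C}$ to be simplicially cocomplete, and (b) the legitimacy of interchanging tensor products with coends, which again is automatic when $\cat{C}$ is simplicially tensored and its tensor product preserves simplicial colimits in each variable. Both hypotheses are part of the standing running assumptions on the enriched symmetric monoidal target category. Once these are acknowledged, the string of isomorphisms above is natural, so one obtains the claimed isomorphism rather than just a weak equivalence.
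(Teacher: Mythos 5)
Your proof is essentially correct and runs parallel to the paper's argument: both start from the Kan-extension/coend description of the Day convolution, use Fubini to rearrange the integrals, invoke co-Yoneda to collapse the inner coend, and then apply the symmetric monoidality of $A$ to identify $A(a\otimes b)$ with $A(a)\otimes A(b)$. (In the paper this is packaged as $\mu_!(P\boxtimes Q)\otimes_{\cat{M}}A \cong P\boxtimes Q\otimes_{\cat{M}\times\cat{M}}\mu^*A$ followed by $\mu^*A\cong A\boxtimes A$, which is exactly your first three displays.)

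The one place where the two proofs genuinely differ is the last step, splitting the double coend $\int^{(a,b)}P(a)\otimes A(a)\otimes Q(b)\otimes A(b)$ into the tensor product of two coends. You justify this by requiring $-\otimes-$ to preserve colimits in each variable, which is fine but a bit stronger than what the paper wants to assume (Section~1 makes no closedness hypothesis on $\cat{C}$). The paper instead observes that both sides are reflexive coequalizers and cites Fresse's Proposition~1.2.1: a bifunctor that preserves reflexive coequalizers separately in each variable preserves reflexive coequalizers of diagrams in $\cat{C}\times\cat{C}$. That is precisely the 3x3-lemma-type statement needed to compare the diagonal coequalizer with the iterated one; your "split the iterated coend" step secretly uses the same interchange, so it is worth being aware that it is not a triviality. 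In all of the paper's examples ($\Mod_E$, chain complexes, simplicial sets) the tensor product is closed, so your stronger hypothesis is harmless, but the paper's phrasing is the more economical one for the level of generality of Section~1.
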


\begin{proof}
By definition, we have
\[(P\otimes Q)\otimes_{\cat{M}}A=\mu_!(P\boxtimes Q)\otimes_{\cat{M}}A\]
By associativity of coends, we have
\[(P\otimes Q)\otimes_{\cat{M}}A\cong P\boxtimes Q\otimes_{\cat{M}\times\cat{M}}\mu^*A\]
Since $A$ is a symmetric monoidal functor, we have an isomorphism $A\boxtimes A\cong \mu^*A$. Thus we have
\[(P\otimes Q)\otimes_{\cat{M}}A\cong P\boxtimes Q\otimes_{\cat{M}\times\cat{M}}A\boxtimes A\]

This last coend is the coequalizer
\[\on{colim}_{m,n,p,q\in\cat{Fin}_{\on{Col}\oper{M}}}[P(m)\times Q(n)\times\cat{M}(p,m)\times\cat{M}(q,n)]\otimes A(p)\otimes A(q)\]
\[\rightrightarrows\on{colim}_{m,n\in\cat{Fin}_{\on{Col}\oper{M}}}[P(m)\times Q(n)]\otimes A(m)\otimes A(n)\]

Each factor of the tensor product $(P\otimes_{\cat{M}}A)\otimes(Q\otimes_{\cat{M}}A)$ can be written as a similar coequalizer. 

Each of these coequalizers is a reflexive coequalizer. Since the tensor product $\cat{C}\times\cat{C}\to\cat{C}$ preserves reflexive coequalizer in both variables separately, according to \cite[Proposition 1.2.1]{fressemodules}, it sends reflexive coequalizers in $\cat{C}\times\cat{C}$ to reflexive coequalizers in $\cat{C}$ . The proposition follows immediately from this fact.
\end{proof}

\subsection{Operadic pushforward}

Let $\oper{M}$ be an operad with set of colors $S$, we have a symmetric monoidal functor $i:\cat{Fin}_S\to \cat{M}$, where $\cat{Fin}_S$ is the PROP associated to $\oper{I}$ the initial colored operad with set of colors $S$. This induce a forgetful functor
\[i^*:\Mod_{\oper{M}}\to\Mod_{\oper{I}}\]

\begin{prop}
The functor $i^*$ is symmetric monoidal.
\end{prop}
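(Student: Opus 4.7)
The plan is to construct an explicit natural isomorphism $i^*(P\otimes Q)\cong i^*P\otimes i^*Q$ by direct coend manipulation, rather than invoking an abstract base-change principle. The key observation is that $i:\cat{Fin}_S\to\cat{M}$ is bijective on objects (both categories have object set $\bigsqcup_n S^n$), and the extra morphisms in $\cat{M}$ come precisely from the operations of $\oper{M}$, as encoded in the PROP formula $\cat{M}(\{a_i\}_n,\{b_j\}_m)=\bigsqcup_{f:n\to m}\prod_j\oper{M}(\{a_i\}_{i\in f^{-1}(j)};b_j)$.

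First I would establish the auxiliary decomposition
\[
\Map_{\cat{M}}(y,x_1\otimes x_2)\ \cong\ \int^{(y_1,y_2)}\Map_{\cat{Fin}_S}(y,y_1\otimes y_2)\times\Map_{\cat{M}}(y_1,x_1)\times\Map_{\cat{M}}(y_2,x_2)
\]
for $y\in\cat{Fin}_S$ and $x_1,x_2\in\cat{M}$, with the coend running over $\cat{Fin}_S\times\cat{Fin}_S$. This is unpacked directly from the formula above: a morphism $y\to x_1\otimes x_2$ consists of a map $f:n\to n_1+n_2$ together with operations, and $f$ partitions $n$ into $f^{-1}(x_1)\sqcup f^{-1}(x_2)$; choosing a bijection $y\to y_1\otimes y_2$ in $\cat{Fin}_S$ that respects this partition exhibits the morphism as the composition of a permutation with a tensor product of two morphisms in $\cat{M}$. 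Any two such bijections differ by an automorphism of $y_1$ or $y_2$ in $\cat{Fin}_S$, and these redundancies are exactly what the coend quotients out.

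Given this, the computation proceeds formally. By definition
\[
i^*(P\otimes Q)(y)=\int^{(x_1,x_2)\in\cat{M}^2}\Map_{\cat{M}}(y,x_1\otimes x_2)\times P(x_1)\times Q(x_2);
\]
substituting the auxiliary formula and applying Fubini for coends rewrites this as
\[
\int^{(y_1,y_2)}\Map_{\cat{Fin}_S}(y,y_1\otimes y_2)\times\Bigl(\int^{x_1}\Map_{\cat{M}}(y_1,x_1)\times P(x_1)\Bigr)\times\Bigl(\int^{x_2}\Map_{\cat{M}}(y_2,x_2)\times Q(x_2)\Bigr).
\]
The inner coends collapse by the co-Yoneda lemma to $P(y_1)$ and $Q(y_2)$, which is exactly $(i^*P\otimes i^*Q)(y)$. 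The compatibility of this isomorphism with the associativity and symmetry constraints of the convolution product follows from naturality and iterated Fubini, and the preservation of units reduces to the fact that the empty sequence receives no morphism from a nonempty sequence in either $\cat{M}$ or $\cat{Fin}_S$.

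The main obstacle I anticipate is bookkeeping rather than genuine difficulty: one must be careful that the auxiliary decomposition really is an isomorphism (i.e.\ that the coend over $\cat{Fin}_S\times\cat{Fin}_S$ enforces precisely the symmetries of ordering the partition $f^{-1}(x_1)\sqcup f^{-1}(x_2)$), and that the $\on{Aut}(I)$-actions from Remark~\ref{symmetric group action} match on the two sides. None of this is deep, but the many indices require a clean organisation to avoid getting lost.
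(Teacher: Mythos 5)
Your proof is correct and hinges on the same key fact as the paper's: a morphism in $\cat{M}$ from a $\cat{Fin}_S$-object into a tensor product decomposes into a choice of splitting in $\cat{Fin}_S$ together with a pair of morphisms in $\cat{M}$, which is exactly your auxiliary coend identity and exactly the computation the paper carries out on representables. The only difference is organizational—the paper first reduces to representables via cocontinuity of $i^*$ and the universal property of Day convolution, whereas you substitute the decomposition directly into the coend formula and finish with Fubini and co-Yoneda—but these are interchangeable packagings of the same argument.
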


\begin{proof}
First, it is obvious that this functor is lax monoidal.

Since colimits in $\Mod_{\oper{M}}$ and $\Mod_{\oper{I}}$ are computed objectwise, the functor $i^*$ commutes with colimits. By the universal property of the Day convolution product (see for instance \cite[Proposition 2.1.]{isaacsonsymmetric}), $i^*$ is symmetric monoidal if and only if its restriction to representables is symmetric monoidal. 

Thus, let $x$ and $y$ be two objects of $\cat{M}$, we want to prove that the canonical map
\[\cat{M}(i(-),x)\otimes\cat{M}(i(-),y)\to\cat{M}(i(-),x\boxplus y)\]
is an isomorphism.

By definition, we have
\[\cat{M}(i(z),x\boxplus y)=\bigsqcup_{f:z\to x\boxplus y}\prod_{i\in x\boxplus y}\oper{M}(f^{-1}(i);i)\]

A map $z\to x\boxplus y$ in $\cat{Fin}_S$ is entirely determined by a choice of splitting $z\cong u\boxplus v$ and the data of a map $u\to x$ and a map $v\to y$. Thus we have
\begin{align*}
\cat{M}(i(z),x\boxplus y)&\cong\bigsqcup_{z\cong u\boxplus v}\bigsqcup_{f:u\to x,g:v\to y}\prod_{i\in x, j\in y}\oper{M}(f^{-1}(i);i)\times\oper{M}(g^{-1}(j);j)\\
                         &\cong\bigsqcup_{z\cong u\boxplus v}\cat{M}(i(u),x)\times\cat{M}(i(v),y)\\
                         &\cong(\cat{M}(i(-),x)\otimes\cat{M}(i(-),y))(z)
\end{align*}
\end{proof}

Let us consider more generally a map $u:\oper{M}\to\oper{N}$ between colored operad. It induces a functor
\[u^*:\Mod_{\oper{N}}\to\Mod_{\oper{M}}\]

\begin{prop}\label{prop-monoidality of pullback}
The functor $u^*$ is symmetric monoidal.
\end{prop}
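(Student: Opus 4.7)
The plan is to imitate the proof of the preceding proposition. The functor $u^*$ is tautologically lax monoidal because the induced functor $u\colon \cat{M} \to \cat{N}$ on PROPs is strong symmetric monoidal, and it preserves all colimits since these are computed objectwise in $\Mod_{\oper{M}}$ and $\Mod_{\oper{N}}$. Hence by the same instance of the universal property of Day convolution cited in the preceding proof, $u^*$ is strong monoidal if and only if the canonical comparison
\[u^*\bigl(\cat{N}(-,x)\bigr) \otimes u^*\bigl(\cat{N}(-,y)\bigr) \longrightarrow u^*\bigl(\cat{N}(-,x \boxplus y)\bigr)\]
is an isomorphism in $\Mod_{\oper{M}}$ for every pair $x, y \in \cat{N}$.

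The key combinatorial input is a decomposition identity, valid in the PROP associated to any operad and immediate from the explicit formula for morphisms: for every $p \in \cat{N}$,
\[\cat{N}(p,\, x \boxplus y) \;\cong\; \bigsqcup_{p = p_1 \boxplus p_2} \cat{N}(p_1, x) \times \cat{N}(p_2, y).\]
The same identity for $\cat{M}$ was already used implicitly in the proof of the preceding proposition. Specializing to $p = u(m)$ and using strong monoidality of $u$ to identify decompositions of $u(m)$ in $\cat{N}$ with decompositions of $m$ in $\cat{M}$, the right hand side of the comparison map evaluated at $m$ becomes
\[\bigsqcup_{m = m_1 \boxplus m_2} \cat{N}(u(m_1), x) \times \cat{N}(u(m_2), y).\]

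For the left hand side, I would unfold the coend formula for Day convolution in $\Mod_{\oper{M}}$, apply the decomposition identity to $\cat{M}(m, m_1 \boxplus m_2)$, pull the resulting disjoint union out of the coend (coproducts commute with colimits), and then collapse each of the two remaining coends by the co-Yoneda lemma. The outcome agrees with the expression above term by term, which establishes the isomorphism.

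The one point that deserves care is the claimed bijection between decompositions of $u(m)$ in $\cat{N}$ and decompositions of $m$ in $\cat{M}$; this might look suspicious when the color map $\on{Col}(\oper{M}) \to \on{Col}(\oper{N})$ is not injective. However, every object of a PROP is canonically a coproduct of its singleton colors and $u$ sends the $i$-th singleton factor of $m$ to the $i$-th singleton factor of $u(m)$, so decompositions on either side correspond to splittings of the common underlying index set and the bijection is tautological. Apart from this bookkeeping the argument is entirely parallel to the proof of the preceding proposition.
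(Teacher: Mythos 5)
Your proof is correct, but it takes a genuinely different route from the paper's. The paper factors $u$ through the commutative square formed with the initial operads $\oper{I}_S \to \oper{M}$ and $\oper{I}_T \to \oper{N}$, uses the fact (from the preceding proposition) that $i^*$ and $j^*$ are conservative and strong symmetric monoidal to reduce to the map $v^*$ induced by $v\colon \oper{I}_S \to \oper{I}_T$, and then settles that case by exhibiting an equivalence of slice categories $(\cat{Fin}_S\times\cat{Fin}_S)_{/p} \simeq (\cat{Fin}_T\times\cat{Fin}_T)_{/vp}$. You instead run the representable check for a general $u$ head-on: unfold the Day convolution in $\Mod_{\oper{M}}$, split $\cat{M}(m, m_1\boxplus m_2)$ via the decomposition identity, commute the coproduct past the coend, and collapse each factor by co-Yoneda. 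Both arguments ultimately rest on the same combinatorial fact --- morphisms into a coproduct in the PROP split according to a partition of the source index set, and $u$ respects these partitions --- but they package it differently. The paper's reduction confines the nontrivial Day-convolution manipulation to the groupoids $\cat{Fin}_S$ and $\cat{Fin}_T$, where the coend becomes a colimit over a slice; your route gives a one-shot, reduction-free argument that in fact subsumes the preceding proposition as the special case $u = i$. Your closing paragraph correctly identifies and handles the one subtlety, namely that a ``decomposition of $u(m)$'' should be read as a partition of the underlying index set rather than an arbitrary coproduct decomposition of the object in $\cat{N}$.
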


\begin{proof}
Let $S$ be the set of colors of $\oper{M}$ and $T$ be the set of colors of $\oper{N}$. We have a commutative diagram of operads
\[
\xymatrix{
\oper{I}_S\ar[r]^i\ar[d]_v& \oper{M}\ar[d]^u\\
\oper{I}_T\ar[r]_j& \oper{N}
}
\]
where $\oper{I}_S$ (resp. $\oper{I}_T$) is the initial operads with set of colors $S$ (resp. $T$). The functor $u^*$ and $v^*$ are obviously lax monoidal. Since $i^*$ and $j^*$ are conservative and symmetric monoidal by the previous proposition, it suffices to prove that $v^*$ is symmetric monoidal. 

Let $X$ and $Y$ be two objects of $\Mod_{\oper{I}_T}$, we want to prove that the map
\[X(v-)\otimes Y(v-)\to X\otimes Y(v-)\]
is an isomorphism. Let $p\in \on{Ob}(\cat{Fin}_S)$, the value of the left hand side at $p$ can be written as
\[\on{colim}_{(q,r)\in(\cat{Fin}_S\times\cat{Fin}_S)_{/p}} X(vq)\times Y(vr)\]

On the other hand, the value of the right hand side at $p$ can be written as
\[\on{colim}_{(x,y)\in(\cat{Fin}_T\times\cat{Fin}_T)_{/vp}} X(x)\times Y(y)\]

The map $v$ induces a functor $(\cat{Fin}_S\times\cat{Fin}_S)_{/p}\to (\cat{Fin}_T\times\cat{Fin}_T)_{/vp}$ that is easily checked to be an equivalence of categories. This concludes the proof.
\end{proof}

\begin{coro}
Assume that $\cat{C}$ is cocomplete. Let $\alpha:\oper{M}\to\oper{N}$ be a morphism of operads. Then, the left Kan extension functor
\[\alpha_!:\on{Fun}(\cat{M},\cat{C})\to \on{Fun}(\cat{N},\cat{C})\]
restricts to a functor
\[\alpha_!:\cat{C}[\oper{M}]\to\cat{C}[\oper{N}]\]
\end{coro}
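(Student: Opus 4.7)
The plan is to express the left Kan extension $\alpha_!A$ via its coend formula and then chain together the two previous propositions with the fact that the Yoneda embedding is strong symmetric monoidal for Day convolution. For $n\in\cat{N}$, the standard coend description of left Kan extension reads
\[(\alpha_!A)(n)\;\cong\;\alpha^*\cat{N}(-,n)\otimes_{\cat{M}}A,\]
where $\cat{N}(-,n)$ denotes the representable right $\oper{N}$-module.

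To upgrade $\alpha_!A$ to a symmetric monoidal functor $\cat{N}\to\cat{C}$ (equivalently, an $\oper{N}$-algebra), I would produce natural isomorphisms
\[(\alpha_!A)(n_1)\otimes(\alpha_!A)(n_2)\;\stackrel{\cong}{\longrightarrow}\;(\alpha_!A)(n_1\boxplus n_2)\]
via the following three-step chain. First, apply Proposition \ref{prop-monoidality of coend} to the $\oper{M}$-algebra $A$ and the right modules $\alpha^*\cat{N}(-,n_i)$ to rewrite the left-hand side as $\bigl[\alpha^*\cat{N}(-,n_1)\otimes\alpha^*\cat{N}(-,n_2)\bigr]\otimes_{\cat{M}}A$. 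Second, apply Proposition \ref{prop-monoidality of pullback} to pull $\alpha^*$ outside the Day convolution, obtaining $\alpha^*[\cat{N}(-,n_1)\otimes\cat{N}(-,n_2)]\otimes_{\cat{M}}A$. Third, invoke the fact that the enriched Yoneda embedding $\cat{N}\hookrightarrow\Mod_{\oper{N}}$ is strong symmetric monoidal with respect to Day convolution, so that $\cat{N}(-,n_1)\otimes\cat{N}(-,n_2)\cong\cat{N}(-,n_1\boxplus n_2)$; this is a direct unwinding of the coend defining Day convolution on representables. Substituting back produces $\alpha^*\cat{N}(-,n_1\boxplus n_2)\otimes_{\cat{M}}A=(\alpha_!A)(n_1\boxplus n_2)$.

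The main obstacle is checking coherence: one must verify that the resulting isomorphisms fit into the associativity, unit, and symmetry diagrams required for a symmetric monoidal functor, and that they are natural in $(n_1,n_2)$. Since each arrow in the chain is a canonical isomorphism (coming from the universal properties of coends and Day convolution, from Yoneda, and from the naturally constructed isomorphism of Proposition \ref{prop-monoidality of coend}), these coherences should follow formally by essentially the same verifications that are implicit in the two preceding propositions. The unit isomorphism $(\alpha_!A)(\un_{\cat{N}})\cong\un_{\cat{C}}$ is the degenerate case of the above combined with the unitality of $A$ as an $\oper{M}$-algebra.
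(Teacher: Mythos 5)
Your proof is correct and follows essentially the same route as the paper: both rely on the coend formula $(\alpha_!A)(n)\cong\cat{N}(\alpha(-),n)\otimes_{\cat{M}}A$, Proposition \ref{prop-monoidality of coend}, Proposition \ref{prop-monoidality of pullback}, and the strong monoidality of the Yoneda embedding. You merely unpack the paper's one-line argument into the explicit chain of natural isomorphisms.
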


\begin{proof}
According to proposition \ref{prop-monoidality of coend}, it suffices for the functor from $\cat{N}$ to $\Mod_{\oper{M}}$ sending $n$ to $\cat{N}(\alpha(-),n)$ to be symmetric monoidal functor. This is precisely implied by proposition \ref{prop-monoidality of pullback} and the fact that the Yoenda's embedding is symmetric monoidal.
\end{proof}

\begin{defi}
We keep the notations of the previous proposition. The $\oper{N}$-algebra $\alpha_!(A)$ is called the \emph{operadic left Kan extension} of $A$ along $\alpha$.
\end{defi}

\begin{prop}\label{prop-operadic vs categorical left Kan extension}
If $\alpha:\oper{M}\to\oper{N}$ is a map between colored operads, then the forgetful functor $\alpha^*:\cat{C}[\oper{N}]\to\cat{C}[\oper{M}]$ is right adjoint to the functor $\alpha_!$.
\end{prop}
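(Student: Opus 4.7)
The plan is to promote the standard Kan extension adjunction at the level of plain functor categories to the subcategories of symmetric monoidal functors, that is, of algebras. Since $\cat{C}$ is assumed cocomplete, the pointwise left Kan extension $\alpha_!\colon \on{Fun}(\cat{M},\cat{C}) \to \on{Fun}(\cat{N},\cat{C})$ is left adjoint to precomposition $\alpha^*$. By the preceding corollary, $\alpha_!$ carries $\cat{C}[\oper{M}]$ into $\cat{C}[\oper{N}]$, and $\alpha^*$ obviously carries $\cat{C}[\oper{N}]$ into $\cat{C}[\oper{M}]$, since the composition of two symmetric monoidal functors is symmetric monoidal. What remains is to verify that the hom-set bijection of the underlying adjunction restricts to morphisms of algebras, i.e., to monoidal natural transformations.

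The key point is to check that the unit $\eta_A\colon A \to \alpha^*\alpha_!A$ is itself a morphism in $\cat{C}[\oper{M}]$. Its component at $m \in \cat{M}$ is the canonical map $A(m) \to \int^{m' \in \cat{M}} \cat{N}(\alpha(m'),\alpha(m)) \otimes A(m')$ picking out the summand indexed by the pair $(m, \id_{\alpha(m)})$. Checking monoidality at a pair $(m_1, m_2)$ amounts to showing that the canonical map into the coend for $m_1 \boxplus m_2$ agrees with the tensor product of the canonical maps for $m_1$ and $m_2$; this is precisely what Proposition \ref{prop-monoidality of coend} asserts when applied to the representables $P = \cat{N}(\alpha(-),\alpha(m_1))$ and $Q = \cat{N}(\alpha(-),\alpha(m_2))$, combined with the monoidality of the Yoneda embedding already used in the preceding corollary.

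Once $\eta_A$ is known to be monoidal, the adjunction descends. Given $f\colon \alpha_!A \to B$ in $\cat{C}[\oper{N}]$, the transpose $\alpha^*f \circ \eta_A$ is a composite of monoidal natural transformations, hence lies in $\cat{C}[\oper{M}]$. Conversely, for $g\colon A \to \alpha^*B$ in $\cat{C}[\oper{M}]$, the transpose $\tilde g\colon \alpha_!A \to B$ is the unique natural transformation with $\alpha^*\tilde g \circ \eta_A = g$; I expect the main obstacle to be showing that this $\tilde g$ is monoidal. Both $\tilde g(n_1) \otimes \tilde g(n_2)$ and $\tilde g(n_1 \boxplus n_2)$ must be rewritten as morphisms out of the reflexive coequalizers appearing in the proof of Proposition \ref{prop-monoidality of coend}, and their identification under the monoidality isomorphism for $\alpha_!A$ will follow from the fact that $g$ is a monoidal natural transformation and that $B$ is a symmetric monoidal functor.
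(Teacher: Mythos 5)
Your proof takes a genuinely different route from the paper. The paper's argument proceeds by monadicity: it first factors $\alpha$ as $\oper{M}\to\oper{M}'\to\oper{N}$ with $\oper{M}'=\oper{M}\sqcup^{\oper{I}_S}\oper{I}_T$, so that the second map is the identity on colors (the first map being trivial). In that reduced case it observes that every $\oper{M}$-algebra is a reflexive coequalizer of free algebras, that the forgetful functors and the two candidate left adjoints all preserve such coequalizers, and hence one only needs to identify $\alpha_!$ with the abstract left adjoint on free algebras. That identification is then immediate from writing $\oper{M}X$ as the categorical left Kan extension of $X^{\otimes-}$ along $\cat{Fin}_S\to\cat{M}$ and composing Kan extensions: $\alpha_!(\oper{M}X)\cong\alpha_!\beta_!X\cong(\alpha\beta)_!X=\oper{N}X$. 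Your strategy is instead to show directly that the adjunction $\alpha_!\dashv\alpha^*$ on plain functor categories restricts along the (non-full!) inclusions $\cat{C}[\oper{M}]\hookrightarrow\on{Fun}(\cat{M},\cat{C})$ and $\cat{C}[\oper{N}]\hookrightarrow\on{Fun}(\cat{N},\cat{C})$. This buys you an argument that avoids the color-factoring step and the monadicity machinery, at the cost of having to verify monoidality of transposes by hand.

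That verification is exactly where you leave a genuine gap, and you are honest about it: the assertion that the transpose $\tilde{g}\colon\alpha_!A\to B$ of a monoidal natural transformation $g\colon A\to\alpha^*B$ is again monoidal is only gestured at. Because the inclusion $\cat{C}[\oper{N}]\subset\on{Fun}(\cat{N},\cat{C})$ is not full, the fact that the hom-set bijection is defined on the ambient categories does not by itself imply it restricts; knowing $\eta_A$ is monoidal only gives you one direction. To close the gap you could observe that $\tilde{g}=\epsilon_B\circ\alpha_!g$ where $\epsilon_B\colon\alpha_!\alpha^*B\to B$ is the counit; the corollary preceding the proposition already tells you $\alpha_!g$ is a morphism of $\oper{N}$-algebras, so the one remaining thing to check is that $\epsilon_B$ is a monoidal natural transformation whenever $B$ is an algebra. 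This is a computation dual to your check on $\eta_A$, carried out on the coend presentation of $\alpha_!\alpha^*B$ and using the symmetric monoidality of $B$ itself. As written, though, the proof stops short of this and therefore does not yet establish the proposition; the paper's reduction to free algebras is precisely what lets it avoid these hand checks altogether.
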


\begin{proof}
First, we observe that $\alpha^*:\cat{C}[\oper{N}]\to\cat{C}[\oper{M}]$ is the restriction of $\alpha^*:\on{Fun}(\cat{N},\cat{C})\to\on{Fun}(\cat{M},\cat{C})$ which explains the apparent conflict of notations.

Let $S$ (resp. $T$) be the set of colors of $\oper{M}$ (resp. $\oper{N}$). Let $\oper{I}_S$ and $\oper{I}_T$ be the initial object in the category of operads with set of colors $S$ (resp. $T$). We define $\oper{M}'=\oper{M}\sqcup^{\oper{I}_S}\oper{I}_T$. The map $\alpha$ can be factored as the obvious map $\oper{M}\to\oper{M}'$ followed by the map $\oper{M}'\to\oper{N}$ which induces the identity map on colors. It suffices to prove the proposition for each of these two maps. The case of the first map is trivial, thus we can assume that $\oper{M}\to\oper{N}$ is the identity map on colors.

The forgetful functor $\cat{C}[\oper{M}]\to\on{Fun}(\cat{M},\cat{C})$ preserves reflexive coequalizers and similarly for $\cat{C}[\oper{N}]\to\on{Fun}(\cat{N},\cat{C})$. On the other hand, any $\oper{M}$-algebra $A$ can be expressed as the following reflexive coequalizer
\[\oper{M}\oper{M}A\rightrightarrows \oper{M}A\to A\]
where the top map $\oper{M}\oper{M}A\to \oper{M}A$ is induced by the monad structure on $\oper{M}$ and the second map is induced by the algebra structure $\oper{M}A\to A$.

Let us write temporarily $L$ for the left adjoint of $\alpha^*:\cat{C}[\oper{N}]\to\cat{C}[\oper{M}]$. According to the previous paragraph, it suffices to prove the proposition for $A=\oper{M}X$ a free $\oper{M}$-algebra on $X\in\cat{C}^{\on{Col}\oper{M}}$. In that case, $LA=\oper{N}X$. On the other hand for $c\in\on{Col}(\oper{M})$, we have
\[\alpha_!A(c)=\oper{N}(\alpha-,c)\otimes_{\cat{M}}A\]

A trivial computation shows that $A=\oper{M}X$ is the left Kan extension of $X^{\otimes-}$ along the obvious map $\beta:\cat{Fin}_{S}\to\cat{M}$ and similarly $\oper{N}X$ is the left Kan extension of $X^{\otimes-}$ along $\alpha\circ\beta$. Thus, we have
\[\alpha_!A\cong\alpha_!\beta_!X\cong(\alpha\circ\beta)_!X=\oper{N}X\]
\end{proof}

\section{Homotopy theory of operads and modules}

In this section we collect a few facts about the homotopy theory in categories of algebras in a reasonable symmetric monoidal simplicial model category.

\begin{defi}
Let $\oper{M}$ be an operad with set of colors $S$. A right module $X:\cat{M}\op\to\S$ is said to be $\Sigma$-cofibrant if its restriction along the map $\cat{Fin}_S\to\S$ is a projectively cofibrant object of $\on{Fun}(\cat{Fin}_S\op,\S)$. 

An operad $\oper{M}$ is said to be $\Sigma$-cofibrant if for each $m\in\on{Col}(\oper{M})$, the right module $\oper{M}(-;m)$ is $\Sigma$-cofibrant over $\oper{M}$.
\end{defi}

\begin{rem}
Note that $\cat{Fin}_S$ is a groupoid. Thus a functor $X$ in $\on{Fun}(\cat{Fin}_S\op,\S)$ is projectively cofibrant if and only if $X(c)$ is an $\on{Aut}(c)$-cofibrant space for each $c$ in $\cat{Fin}_S$. This happens in particular, if $\on{Aut}(c)$ acts freely on $X(c)$.

In particular, if $\oper{O}$ is a single-color operad, it is $\Sigma$-cofibrant if and only if for each $n$, $\oper{O}(n)$ is cofibrant as a $\Sigma_n$-space.
\end{rem}

\begin{defi}
A \emph{weak equivalence} between operads is a morphism of operad $f:\oper{M}\to\oper{N}$ which is a bijection on objects and such that for each $\{m_i\}_{i\in I}$ a finite set of colors of $\oper{M}$ and each $m$ a color of $\oper{M}$, the map
\[\oper{M}(\{m_i\};m)\to\oper{N}(\{f(m_i)\};f(m))\]
is a weak equivalence.
\end{defi}

\begin{rem}
This is not the most general form of weak equivalences of operads but we will not need a fancier definition in this paper.
\end{rem}

\subsection*{Algebras in categories of modules over a ring spectrum}

If $E$ is a commutative monoid in the category $\Spec$ of symmetric spectra, we define $\Mod_E$ to be the category of right modules over $E$ equipped with the positive model structure (see \cite{schwedeuntitled}). This category is a closed symmetric monoidal left proper simplicial model category. There is another model structure $\Mod_E^a$ on the same category with the same weak equivalences but more cofibrations. In particular, the unit $E$ is cofibrant in $\Mod_E^a$ but not in $\Mod_E$. The model category $\Mod_E^a$ is also a symmetric monoidal left proper simplicial model category.

\begin{theo}\label{theo-operads in ring spectra}
Let $E$ be a commutative symmetric ring spectrum. Then the positive model structure on $\Mod_E$ is such that for any operad $\oper{M}$, the category $\Mod_E[\oper{M}]$ has a model structure in which the weak equivalences and fibrations are colorwise. Moreover if $A$ is a cofibrant algebra over an operad $\oper{M}$, then $A$ is cofibrant for the absolute model structure.
\end{theo}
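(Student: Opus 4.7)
The plan is to apply Kan's transfer theorem to the free--forgetful adjunction
\[ F : (\Mod_E)^{\on{Col}(\oper{M})} \rightleftarrows \Mod_E[\oper{M}] : U, \]
where $F$ sends a tuple to the free $\oper{M}$-algebra on it and the source carries the projective model structure in which each factor is the positive model structure on $\Mod_E$. Fibrations and weak equivalences in $\Mod_E[\oper{M}]$ will then be those that are colorwise in $\Mod_E$. Since $\Mod_E$ is combinatorial, smallness of the domains of generating (acyclic) cofibrations is automatic, and the whole transfer argument reduces to the acyclicity hypothesis: for any generating acyclic cofibration $X \to Y$ and any map of algebras $\oper{M}(X) \to A$, the pushout $A \to B := A \sqcup_{\oper{M}(X)} \oper{M}(Y)$ is a colorwise acyclic cofibration in $\Mod_E$.

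To establish acyclicity, I would describe $B$ explicitly via the standard arity filtration $A = B_0 \to B_1 \to B_2 \to \cdots$, in which $B_n \to B_{n+1}$ is a pushout in $\Mod_E$ built from the symmetric collection $\oper{M}(n{+}1;-)$ smashed over $\Sigma_{n+1}$ with the iterated pushout-product $Q^{n+1}(X \to Y)$ (tensored with appropriate powers of $A$). The crucial feature of the positive model structure is that, for a positive cofibration $X \to Y$, the object $Q^{n+1}(X \to Y)$ is cofibrant as a $\Sigma_{n+1}$-object in a sense strong enough that tensoring with the generally non-$\Sigma_{n+1}$-cofibrant object $\oper{M}(n{+}1;-)$ still preserves acyclic cofibrations. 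Assembling the stages of the filtration yields the desired acyclicity of $A \to B$, and hence the transferred model structure exists with the stated colorwise description of fibrations and weak equivalences.

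For the second statement, I would use the same arity filtration to track cofibrancy in the absolute model structure $\Mod_E^a$. The starting point is the initial $\oper{M}$-algebra, whose underlying objects at each color are of the form $\oper{M}(\emptyset;c) \otimes \un$; since $\un$ is cofibrant in $\Mod_E^a$ and the simplicial enrichment is Quillen, these are cofibrant there. For a cell attachment along $\oper{M}(X) \to \oper{M}(Y)$ with $X \to Y$ a positive cofibration, the same filtration of the resulting pushout exhibits each $B_n \to B_{n+1}$ as a pushout along a map that, by the same $\Sigma_{n+1}$-freeness of $Q^{n+1}(X \to Y)$, is an absolute cofibration provided $A$ was absolutely cofibrant. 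Induction over the cellular construction of a cofibrant $\oper{M}$-algebra then yields absolute cofibrancy of its underlying object.

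The main obstacle is the $\Sigma_n$-equivariant cofibrancy analysis of the iterated pushout-products underlying the arity filtration. This is the extension to arbitrary (non-$\Sigma$-cofibrant) operads of the fundamental property of the positive stable model structure used by Shipley and collaborators to put a model structure on commutative symmetric ring spectra, and it is precisely what allows one to dispense with any cofibrancy hypothesis on $\oper{M}$. Once this technical heart is in hand, both assertions of the theorem fall out of the same filtration--essentially the same calculation bookkeeps acyclicity for the first statement and absolute cofibrancy for the second.
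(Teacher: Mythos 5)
Your proposal is correct and outlines precisely the strategy of the reference the paper itself cites (Pavlov--Scholbach); the paper's own ``proof'' is nothing more than this citation. The $\Sigma_n$-equivariance property of iterated pushout-products in the positive stable model structure that you flag as the ``main obstacle'' is indeed the technical heart supplied by that reference (there formalized as symmetric $h$-monoidality / symmetroidality), and with that in hand your Kan-transfer plus arity-filtration argument is exactly how the theorem is established.
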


\begin{proof}
This is done in \cite{pavlovsymmetric}. 
\end{proof}

Moreover, this model structure is homotopy invariant:

\begin{theo}\label{theo-Quillen equivalence}
Let $\alpha:\oper{M}\to\oper{N}$ be a weak equivalence of operads. Then the adjunction
\[\alpha_!:\Mod_E[\oper{M}]\leftrightarrows\Mod_E[\oper{N}]:\alpha^*\]
is a Quillen equivalence.
\end{theo}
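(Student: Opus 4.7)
The plan is first to verify that the given adjunction is a Quillen adjunction, and then to establish that for every cofibrant $\oper{M}$-algebra $A$, the unit map $A \to \alpha^*\alpha_! A$ is a weak equivalence; together these imply a Quillen equivalence since weak equivalences in $\Mod_E[\oper{N}]$ are detected colorwise from $\Mod_E$. The right adjoint $\alpha^*$ preserves colorwise fibrations and trivial fibrations on the nose, so $(\alpha_!, \alpha^*)$ is Quillen.

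For the rest of the argument I would mimic the factorization used in the proof of Proposition \ref{prop-operadic vs categorical left Kan extension}: write $\alpha$ as the composite $\oper{M} \to \oper{M}' \to \oper{N}$ where $\oper{M}' = \oper{M}\sqcup^{\oper{I}_S}\oper{I}_T$ and the second map is the identity on colors. The first map is a change of color sets along the bijection that $\alpha$ induces by hypothesis, and the induced functor on algebras is directly an equivalence of categories (hence a Quillen equivalence). This reduces the problem to the case where $\alpha$ is the identity on the color set $S$ and a levelwise weak equivalence on each space of operations.

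In that reduced case I would run a cell induction on cofibrant $\oper{M}$-algebras using Theorem \ref{theo-operads in ring spectra}: every cofibrant algebra is a retract of a transfinite composite of pushouts of the form $A_{\beta-1} \to A_{\beta-1} \sqcup_{\oper{M} K_\beta} \oper{M} L_\beta$ where $K_\beta \to L_\beta$ is a generating cofibration in $\Mod_E^{S}$. For the base case of free algebras, the formula
\[\oper{M} X(c) \cong \coprod_{x\in\cat{Fin}_S} \oper{M}(x;c) \otimes_{\on{Aut}(x)} X^{\otimes x}\]
combined with the fact that the positive model structure on $\Mod_E$ is tailored so that coproducts indexed by symmetric orbits compute the correct homotopy type on cofibrant $X$, together with the levelwise weak equivalence $\oper{M}(x;c) \to \oper{N}(x;c)$, yields the desired equivalence $\oper{M} X \to \alpha^*\oper{N} X$. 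Sequential colimits are handled by the second part of Theorem \ref{theo-operads in ring spectra}: cofibrant algebras are cofibrant in the absolute model structure, so transfinite composites of cofibrations of algebras have the expected underlying homotopy type.

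The main obstacle is the pushout step. I would produce the standard enveloping-operad filtration of $A' = A\sqcup_{\oper{M} K}\oper{M} L$ over $A$, whose $n$-th layer is built from $\Sigma_n$-coinvariants of a tensor product involving $A$, the operation spaces of $\oper{M}$, and the cofibre of $K\to L$, and then compare it level by level with the analogous filtration of $\alpha_! A'$. The comparison map is a weak equivalence on each layer precisely because $\alpha$ is a levelwise weak equivalence of operads and the positive model structure, together with the absolute cofibrancy of $A$ supplied by Theorem \ref{theo-operads in ring spectra}, guarantees that each layer represents an honest homotopy coinvariant. Passing to the colimit and invoking left properness of $\Mod_E$ colorwise then finishes the induction.
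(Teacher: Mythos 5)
The paper does not prove this theorem at all: the entire proof is a citation to \cite{pavlovsymmetric}, so there is no ``paper's own proof'' to compare against. Your proposal is a genuine attempt at an argument, and its overall skeleton is the standard one found in the literature you would eventually be led to (Pavlov--Scholbach, and closely related arguments of Harper, Fresse, and Elmendorf--Mandell): reduce to the identity-on-colors case via the factorization $\oper{M}\to\oper{M}'\to\oper{N}$ (mirroring the proof of Proposition \ref{prop-operadic vs categorical left Kan extension}), observe that $\alpha^*$ detects and preserves all weak equivalences so that the Quillen equivalence criterion reduces to the underived unit on cofibrant algebras, and then run a cell induction over the cofibrant generation of the algebra model structure, with a base case for free algebras, a layer-by-layer comparison for pushouts along free maps using the enveloping-operad filtration, and a sequential-colimit step using absolute cofibrancy of cofibrant algebras.

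Two places in your outline deserve more scrutiny, though neither is a fatal flaw. First, in the pushout step the filtration layers are not literally built from the operation spaces of $\oper{M}$ but from the enveloping operad $U_{\oper{M}}(A)$, which mixes $\oper{M}$ with the algebra $A$ built so far; one therefore needs an auxiliary lemma that $U_{\oper{M}}(A)\to U_{\oper{N}}(\alpha_! A)$ is itself a levelwise weak equivalence, and this is proved by an interleaved induction rather than being immediate from the hypothesis on $\alpha$. Second, the base-case claim that $\oper{M}(x;c)\otimes_{\on{Aut}(x)} X^{\otimes x}$ computes the homotopy coinvariants when $X$ is positively cofibrant is exactly the load-bearing technical input of the positive model structure; it is correct, but it is the heart of why the positive structure is used at all and not a formality. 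With those two caveats made explicit, your argument is a faithful reconstruction of the proof the paper is silently relying on, and in that sense it adds genuine content to a line the paper leaves as a pure reference.
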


\begin{proof}
This is also done in \cite{pavlovsymmetric}.
\end{proof}

If $R$ is a commutative $\mathbb{Q}$-algebra, the category $\cat{Ch_*}(R)$ of chain complexes over $R$ with its projective model structure satisfies a similar theorem. Note that the category $\cat{Ch}_*(R)$ is not simplicial. Nevertheless, the functor $C_*$ which assigns to a simplicial set its normalized $R$-chain complex is lax monoidal. Therefore, it makes sense to speak about an algebra over a simplicial operad $\oper{M}$, this is just an algebra over the operad $C_*(\oper{M})$. In that case the cofibrant algebras are colorwise cofibrant. Proofs can be found in \cite{hinichrectification}.

This result remains true for symmetric spectra in more general modal categories. A case of great interest is the case of motivic spectra. That is symmetric spectra with respect to $\mathbf{P}^1_k$ in the category of based simplicial presheaves over the site of smooth schemes over a field $k$. More details about this can be found in \cite{pavlovsymmetric}.

\subsection*{Berger-Moerdijk model structure}

\begin{theo}
Let $\cat{C}$ be a left proper simplicial symmetric monoidal cofibrantly generated model category. Assume that $\cat{C}$ has a monoidal fibrant replacement functor and a cofibrant unit. Then all $\Sigma$-cofibrant operads are admissible in $\cat{C}$. Moreover, if $A$ is a cofibrant algebra over a $\Sigma$-cofibrant operad $\oper{M}$, then $A$ is colorwise cofibrant in $\cat{C}$.
\end{theo}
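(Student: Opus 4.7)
The plan is to construct the model structure on $\cat{C}[\oper{M}]$ by transfer along the free–forgetful adjunction $\cat{C}^{\on{Col}(\oper{M})}\leftrightarrows\cat{C}[\oper{M}]$, where the left hand side carries the product model structure and the left adjoint sends $X$ to the free $\oper{M}$-algebra $\oper{M}(X)$ defined at the end of Section 1. Declaring a morphism in $\cat{C}[\oper{M}]$ to be a weak equivalence or fibration when it is one after forgetting, Kan's transfer criterion reduces the admissibility statement to two points: (i) the smallness conditions needed to run the small object argument, which follow from cofibrant generation of $\cat{C}$ and the fact that the forgetful functor preserves filtered colimits; and (ii) that every transfinite composition of pushouts of free maps $\oper{M}(j):\oper{M}(K)\to\oper{M}(L)$ along arbitrary algebra maps $\oper{M}(K)\to A$, where $j:K\to L$ is a generating trivial cofibration in $\cat{C}^{\on{Col}(\oper{M})}$, is a weak equivalence after forgetting.

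The real content is (ii), and I would treat it via the Berger–Moerdijk filtration. Given a pushout
\[
\xymatrix{
\oper{M}(K)\ar[r]\ar[d]_{\oper{M}(j)}& A\ar[d]\\
\oper{M}(L)\ar[r]& B
}
\]
one presents $B$ as the colimit in $\cat{C}^{\on{Col}(\oper{M})}$ of a sequence $A=B_0\to B_1\to\ldots$, in which each $B_{n-1}\to B_n$ is a pushout of a map assembled from the iterated pushout–product $Q^n(j)\to L^{\otimes n}$, tensored with the arity-$n$ operations of $\oper{M}$ and with $A$, and taken $\Sigma_n$-coinvariantly. Because $j$ is a trivial cofibration and the pushout–product axiom holds in $\cat{C}$, the map $Q^n(j)\to L^{\otimes n}$ is a $\Sigma_n$-equivariant trivial cofibration between projectively cofibrant $\Sigma_n$-objects, and the $\Sigma$-cofibrancy hypothesis on $\oper{M}(n)$ then makes the coinvariants a trivial cofibration in $\cat{C}$. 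Left properness and the monoidal fibrant replacement functor are what allow one to run a path-object argument and thus to keep the argument correct even when $A$ is not itself cofibrant; this is the technical heart of Berger–Moerdijk and the step I expect to be the main obstacle. The transfinite composition of the $B_{n-1}\to B_n$ is then a trivial cofibration, establishing admissibility.

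For the second assertion the same filtration is decisive. If $A$ is colorwise cofibrant and $j:K\to L$ is a cofibration between cofibrant objects of $\cat{C}^{\on{Col}(\oper{M})}$, then each $B_{n-1}\to B_n$ is a pushout of a cofibration in $\cat{C}$, so $B$ remains colorwise cofibrant. The initial $\oper{M}$-algebra is colorwise cofibrant thanks to the cofibrant unit together with $\Sigma$-cofibrancy of $\oper{M}$, so by induction every cell algebra is colorwise cofibrant; a cofibrant algebra is a retract of such a cell algebra, hence also colorwise cofibrant. Both halves of the theorem thus reduce to the analysis of the filtration quotients, and the entire argument rests on the $\Sigma$-cofibrancy of $\oper{M}$ combined with the monoidal fibrant replacement that guarantees homotopy invariance without assuming the source algebra $A$ is cofibrant.
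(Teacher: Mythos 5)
Your sketch follows the transfer-plus-filtration strategy of Berger--Moerdijk and Fresse, which is precisely what the paper's citations point to; but be aware that the paper itself does not prove the theorem. It cites Berger--Moerdijk (Theorem 4.1 of their Resolution paper) for admissibility, and for the colorwise-cofibrancy claim it cites Fresse's book only for one-colour operads, explicitly stating that the author knows of no reference in the coloured case. Your cell/retract induction for the second claim is therefore filling an acknowledged gap rather than reproducing an argument from the paper.

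Two points in the filtration step deserve more care. First, the iterated pushout-product $Q^n(j)\to L^{\otimes n}$ is a $\Sigma_n$-equivariant map that is a cofibration of underlying objects, but it is generally \emph{not} a cofibration in the projective model structure on $\Sigma_n$-objects (the permutation action on the complement typically has stabilizers), so ``between projectively cofibrant $\Sigma_n$-objects'' is not the right assertion. What saves the argument is precisely the $\Sigma$-cofibrancy of $\oper{M}(n)$: tensoring a projectively cofibrant $\Sigma_n$-object against a $\Sigma_n$-equivariant underlying cofibration and passing to coinvariants yields a genuine cofibration in $\cat{C}$. Second, the cofibrancy of the initial $\oper{M}$-algebra needs only the cofibrant unit, since arity-zero operations carry a trivial symmetric group action; and for the inductive step you are implicitly taking generating cofibrations with cofibrant source, which is not automatic from the stated hypotheses and should be arranged or argued around. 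With those adjustments your plan is the intended one, and it supplies the missing reasoning for the coloured colorwise-cofibrancy claim.
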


\begin{proof} The proof of the last axiom is done in \cite[Theorem 4.1.]{bergerresolution}. The second claim is proved in \cite{fressemodules} in the case of single-color operads. Unfortunately, we do not know a reference in the case of colored operads.
\end{proof}

For instance $\S$ and $\cat{Top}$ satisfy the conditions of the theorem. Every object is fibrant in $\cat{Top}$ and the functor $X\mapsto \on{Sing}(|X|)$ is a symmetric monoidal fibrant replacement functor in $\S$. If $R$ is a commutative ring, the category $s\Mod_R$ of simplicial $R$-modules satisfies the conditions. 

If $\cat{T}$ is a small site, the category of simplicial sheaves over $\cat{T}$ with its injective model structure (in which cofibrations are monomorphisms and weak equivalences are local weak equivalences) satisfies the conditions of the theorem.

\subsection*{Homotopy invariance of operadic coend}

From now on, we let $(\cat{C},\otimes,\un)$ be the category $\Mod_E$ with its positive model structure. We write $\cat{C}$ instead of $\Mod_E$ to emphasize that the argument work in greater generality modulo some small modifications. In particular, the results we give extend to the model category of chain complexes over $R$ a $\mathbb{Q}$-algebra. They also extend to a category that satisfies the Berger-Moerdijk assumptions if one restricts to $\Sigma$-cofibrant operads and modules.

We want to study the homotopy invariance of coends of the form $P\otimes_{\cat{M}}A$ for $A$ an $\oper{M}$-algebra and $P$ a right module over $\oper{M}$.

\begin{prop}\label{invariance of operadic coend}
Let $\oper{M}$ be an operad and let $\cat{M}$ be the PROP associated to $\oper{M}$. Let $A:\cat{M}\to\cat{C}$ be an algebra. Then
\begin{enumerate}
\item Let $P:\cat{M}\op\to \S$ be a right module. Then $P\otimes_{\cat{M}}-$ preserves weak equivalences between cofibrant $\oper{M}$-algebras.
\item If $A$ is a cofibrant algebra, the functor $-\otimes_{\cat{M}}A$ is a left Quillen functor from right modules over $\oper{M}$ to $\cat{C}$ with the absolute model structure.
\item Moreover the functor $-\otimes_{\cat{M}}A$ preserves all weak equivalences between right modules.
\end{enumerate}
\end{prop}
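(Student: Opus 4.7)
The plan is to prove (2) first as a direct check on the generators of the projective model structure on right modules, then derive (3) from a two-sided bar construction argument, and finally obtain (1) by combining (3) with a cell induction on a projectively cofibrant replacement of $P$.

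For (2), the functor $-\otimes_{\cat{M}} A$ preserves all colimits, with right adjoint sending $X\in\cat{C}$ to the right $\oper{M}$-module $m\mapsto \Map_{\cat{C}}(A(m), X)$, so it suffices to check that it preserves generating (trivial) cofibrations. These have the form $\cat{M}(-, m)\otimes i$ with $i$ a generating (trivial) cofibration in $\S$, and the coend Yoneda lemma gives
\[\cat{M}(-, m)\otimes i\otimes_{\cat{M}} A \cong A(m)\otimes i.\]
Theorem \ref{theo-operads in ring spectra} tells us that the cofibrant $\oper{M}$-algebra $A$ is colorwise cofibrant in the absolute model structure $\cat{C}^a$, so $A(m)=A_{m_1}\otimes\cdots\otimes A_{m_k}$ is cofibrant in $\cat{C}^a$ by the pushout--product axiom together with the cofibrancy of the unit of $\cat{C}^a$. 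Simplicial enrichment then promotes $A(m)\otimes i$ to a (trivial) cofibration in $\cat{C}^a$.

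For (3), I would introduce the two-sided simplicial bar construction $B_\bullet(P, \cat{M}, A)$, whose geometric realization $B(P, \cat{M}, A)$ carries a canonical augmentation to $P\otimes_{\cat{M}} A$. Colorwise cofibrancy of $A$ in $\cat{C}^a$ makes the simplicial object Reedy cofibrant; together with an extra-degeneracy argument on $B(\cat{M}, \cat{M}, A)$, this identifies the augmentation as a weak equivalence. Now for a weak equivalence $P\to P'$ of right modules, each level $B_n(P, \cat{M}, A)$ depends on $P$ only through the tensor factor $P(x_n)\in\S$, so the level-wise map is the tensor of a weak equivalence of simplicial sets with a cofibrant object of $\cat{C}^a$, hence a weak equivalence; realization of a level-wise weak equivalence between Reedy cofibrant simplicial objects is a weak equivalence, and comparing with the augmentations yields (3).

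For (1), choose a projectively cofibrant replacement $QP\to P$. The trivial fibration $QP\to P$ is a weak equivalence of right modules, so by (3) both vertical arrows in
\[\xymatrix{QP\otimes_{\cat{M}} A \ar[r]\ar[d] & QP\otimes_{\cat{M}} B \ar[d]\\ P\otimes_{\cat{M}} A \ar[r] & P\otimes_{\cat{M}} B}\]
are weak equivalences, reducing the problem to the top row. The cofibrant module $QP$ is a retract of a transfinite cellular extension built from representables $\cat{M}(-, m)\otimes i$, and for each cell the functor $-\otimes_{\cat{M}} A$ produces $A(m)\otimes i$; the map $f\colon A\to B$ induces a weak equivalence $A(m)\otimes i\to B(m)\otimes i$ between cofibrant objects of $\cat{C}^a$, using Theorem \ref{theo-operads in ring spectra} together with the fact that tensoring a weak equivalence between cofibrants with a cofibrant object yields a weak equivalence. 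Left properness of $\cat{C}^a$ then propagates the weak equivalence through pushouts and transfinite composition. The principal technical obstacle lies in (3): the verification that the bar-construction augmentation is a weak equivalence and that realization is homotopically well-behaved, both of which depend crucially on the colorwise cofibrancy of $A$ supplied by Theorem \ref{theo-operads in ring spectra}.
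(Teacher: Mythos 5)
Your proof of (2) matches the paper's: identify the generating (trivial) cofibrations of the right-module category as $\cat{M}(-,m)\otimes i$, compute their coend against $A$ by Yoneda as $A(m)\otimes i$, and use colorwise cofibrancy of a cofibrant algebra in the absolute model structure together with the simplicial model axiom.

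For (3), however, there is a gap. Your justification that the augmentation $B(P,\cat{M},A)\to P\otimes_{\cat{M}}A$ is a weak equivalence rests on ``an extra-degeneracy argument on $B(\cat{M},\cat{M},A)$'', but the extra degeneracy there inserts an identity in the far-left $\cat{M}$-factor, and this is \emph{not} natural in the covariant $\cat{M}$-variable (the left $\cat{M}$-module structure acts through that very same factor). It therefore gives a simplicial contraction of $B_\bullet(\cat{M},\cat{M},A)(m)\to A(m)$ for each fixed $m$, i.e.\ a pointwise weak equivalence $B(\cat{M},\cat{M},A)\to A$ of functors $\cat{M}\to\cat{C}$, but it does not descend along $P\otimes_{\cat{M}}-$ to a contraction of $B_\bullet(P,\cat{M},A)\to P\otimes_{\cat{M}}A$. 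To pass from the pointwise equivalence $B(\cat{M},\cat{M},A)\to A$ to a weak equivalence after applying $P\otimes_{\cat{M}}-$ is exactly the invariance statement you are trying to prove, so the argument is circular as written. (The paper's Proposition \ref{Operadic vs categorical}, which identifies the bar construction with the derived coend, is in fact deduced from the present proposition.) A non-circular bar-construction argument would need, e.g., a cell induction on $A$ so that the extra degeneracy can be taken on the $A$-side for free algebras and then propagated through cell attachments. The paper sidesteps this entirely: it forms the auxiliary operad $\oper{M}_P$ with one extra color $\infty$ and $\oper{M}_P(\{m_1,\ldots,m_k\};\infty)=P(\{m_1,\ldots,m_k\})$, identifies $P\otimes_{\cat{M}}A$ with $\on{ev}_\infty(\alpha_P)_!A$, and then gets (3) from the fact that a weak equivalence $P\to Q$ induces a weak equivalence of operads $\oper{M}_P\to\oper{M}_Q$ to which Theorem \ref{theo-Quillen equivalence} applies. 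With this device, (1) also comes for free and more directly than your route through (3) plus cell induction: $(\alpha_P)_!$ is left Quillen, hence preserves weak equivalences between cofibrant algebras, and $\on{ev}_\infty$ preserves all weak equivalences.
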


\begin{proof}
For $P$ a functor $\cat{M}\op\to\S$, we denote by $\oper{M}_{P}$ the  operad whose colors are $\on{Col}(\oper{M})\sqcup\infty$ and whose operations are as follows:
\begin{align*}
\oper{M}_{P}(\{m_1,\ldots,m_k\},n)&=\oper{M}(\{m_1,\ldots,m_k\},n)\;\;\textrm{if }\infty\notin\{m_1,\ldots,m_k,n\}\\
\oper{M}_{P}(\{m_1,\ldots,m_k\};\infty)&=P(\{m_1,\ldots,m_k\})\;\;\textrm{if }\infty\notin\{m_1,\ldots,m_k\}\\
\oper{M}_{P}(\{\infty\};\infty)&=*\\
\oper{M}_{P}(\{m_1,\ldots,m_k\},n)&=\varnothing\;\;\textrm{in any other case}
\end{align*}

There is an obvious operad map $\alpha_P:\oper{M}\to\oper{M}_P$. Moreover by \ref{prop-operadic vs categorical left Kan extension} we have
\[\on{ev}_{\infty}(\alpha_P)_!A\cong\oper{M}_P(-,\infty)\otimes_{\cat{M}}A\cong P\otimes_{\cat{M}}A\]
where $\on{ev}_\infty$ denotes the functor that evaluate an $\oper{M}_P$-algebra at the color $\infty$.

\emph{Proof of the first claim}. If $A\to B$ is a weak equivalence between cofibrant $\oper{M}$-algebras, then $(\alpha_P)_!A$ is weakly equivalent to $(\alpha_P)_!B$ since $(\alpha_P)_!$ is a left Quillen functor. To conclude the proof, we observe that the functor $\on{ev}_{\infty}$ preserves all weak equivalences.

\emph{Proof of the second claim}. In order to show that $P\mapsto P\otimes_{\cat{M}}A$ is left Quillen it suffices to check that it sends generating (trivial) cofibrations to (trivial) cofibrations. 

For $m\in\on{Ob}(\cat{M})$, denote by $\iota_m$ the functor $\S\to\on{Fun}(\on{Ob}(\cat{M}),\S)$ sending $X$ to the functor sending $m$ to $X$ and everything else to $\varnothing$. Denote by $F_{\cat{M}}$ the left Kan extension functor
\[F_{\cat{M}}:\on{Fun}(\on{Ob}(\cat{M})\op,\S)\to\on{Fun}(\cat{M}\op,\cat{C})\]

We can take as generating (trivial) cofibrations the maps of the form $F_{\cat{M}} \iota_m I$ ($F_{\cat{M}} \iota_m J$) for $I$ (resp. $J$), the generating cofibrations (resp. trivial cofibrations) of $\S$. We have:
\[F_{\cat{M}}\iota_m I\otimes_{\cat{M}}A\cong I\otimes A(m)\]
Since $A$ is cofibrant as an algebra its value at each object of $\cat{M}$ is cofibrant in the absolute model structure. Since the absolute model structure is a simplicial model category we are done.

\emph{Proof of the third claim}. Let $P\to Q$ be a weak equivalence between functors $\cat{M}\op\to \S$. This induces a weak equivalence between  operads $\beta:\oper{M}_P\to\oper{M}_Q$. It is clear that $\alpha_Q=\beta\circ\alpha_P$, therefore $(\alpha_Q)_!A=\beta_!(\alpha_P)_!A$. We apply $\beta^*$ to both side and get
\[\beta^*\beta_!(\alpha_P)_!A=\beta^*(\alpha_Q)_!A\]

Since $(\alpha_P)_!A$ is cofibrant and $\beta^*$ preserves all weak equivalences, the adjunction map \[(\alpha_P)_!A\to\beta^*\beta_!(\alpha_P)_!A\]
is a weak equivalence by \ref{theo-Quillen equivalence}. Therefore the obvious map
\[(\alpha_P)_!A\to\beta^*(\alpha_Q)_!A\]
is a weak equivalence. 

If we evaluate this at the color $\infty$, we find a weak equivalence
\[P\otimes_{\cat{M}}A\to Q\otimes_{\cat{M}}A\]
\end{proof}

\subsection*{Operadic vs categorical homotopy left Kan extension}

As we have seen in \ref{prop-operadic vs categorical left Kan extension}, given a map of operad $\alpha:\oper{M}\to\oper{N}$, the operadic left Kan extension $\alpha_!$ applied to an algebra $A$ over $\oper{M}$ coincides with the left Kan extension of the functor $A:\cat{M}\to\cat{C}$. We call the latter the categorical left Kan extension of $A$. 

It is not clear that the derived functors of these two different left Kan extension coincide. Indeed, in the case of the derived operadic left Kan extension, we take a cofibrant replacement of the $\oper{M}$-algebra $A$ as an algebra and in the case of the categorical left Kan extension we take a cofibrant replacement of the functor $A:\cat{M}\to\cat{C}$ in the category of functors with the projective model structure. However, it turns out that the two constructions coincide.

\begin{prop}\label{Operadic vs categorical}
Let $\alpha:\oper{M}\to\oper{N}$ be a morphism of operads. Let $A$ be an algebra over $\oper{M}$. The derived operadic left Kan extension $\L\alpha_!(A)$ is weakly equivalent to the homotopy left Kan extension of $A:\cat{M}\to\cat{C}$ along the induced map $\cat{M}\to\cat{N}$.
\end{prop}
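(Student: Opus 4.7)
The plan is to express both derived functors as operadic coends against the representable right module and to use the invariance results of Proposition \ref{invariance of operadic coend} to identify them. Fix a color $n \in \on{Col}(\oper{N})$ and set $P := \cat{N}(\alpha(-), n)$, viewed as a right $\oper{M}$-module. Choose a cofibrant replacement $QA \to A$ in $\cat{C}[\oper{M}]$, so that $QA$ is colorwise cofibrant by Theorem \ref{theo-operads in ring spectra}. Choose also a projective cofibrant replacement $Q'A \to QA$ in $\on{Fun}(\cat{M}, \cat{C})$; by the two-out-of-three property the composite $Q'A \to A$ is still a projective cofibrant replacement. Unwinding the definitions, $(\L\alpha_! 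A)(n) = P \otimes_{\cat{M}} QA$, while the homotopy left Kan extension of $A$ evaluated at $n$ is $P \otimes_{\cat{M}} Q'A$, so the claim reduces to exhibiting a natural weak equivalence between these two coends.

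I would do this through an auxiliary projective cofibrant replacement $\tilde P \to P$ of right $\oper{M}$-modules, which yields the commutative square
\[
\xymatrix{
\tilde P \otimes_{\cat{M}} Q'A \ar[r]\ar[d] & \tilde P \otimes_{\cat{M}} QA\ar[d]\\
P \otimes_{\cat{M}} Q'A\ar[r] & P \otimes_{\cat{M}} QA.
}
\]
The right-hand vertical arrow is a weak equivalence by Proposition \ref{invariance of operadic coend}(3), since $QA$ is a cofibrant $\oper{M}$-algebra. A cell-complex induction on $\tilde P$, whose generating cells $F_{\cat{M}}\iota_m I$ satisfy $F_{\cat{M}}\iota_m I \otimes_{\cat{M}} F \cong I \otimes F(m)$, shows that $\tilde P \otimes_{\cat{M}} -$ preserves weak equivalences between colorwise cofibrant functors; applied to $Q'A \to QA$ this makes the top horizontal arrow a weak equivalence. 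Symmetrically, a cell-complex induction on the projectively cofibrant $Q'A$ shows that $- \otimes_{\cat{M}} Q'A$ preserves all weak equivalences between right modules, so the left-hand vertical arrow is a weak equivalence. Concatenating the resulting zigzag produces $P \otimes_{\cat{M}} QA \simeq P \otimes_{\cat{M}} Q'A$, which is exactly the conclusion.

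The principal obstacle is the step that $\tilde P \otimes_{\cat{M}} -$ preserves weak equivalences between colorwise cofibrant diagrams, which is subtle because $QA$ is only guaranteed to be colorwise cofibrant, not projectively cofibrant as a functor on $\cat{M}$. The required cell induction reduces, via the pushout-product axiom for the symmetric monoidal model structure on $\cat{C}$, to the observation that each generating cell contributes a tensor of the form $I \otimes F(m)$ with $I$ a cofibrant simplicial set and $F(m)$ cofibrant in $\cat{C}$; the standard filtered-colimit and cobase-change arguments then propagate weak equivalences through the cell structure, with Theorem \ref{theo-operads in ring spectra} as the only substantive input.
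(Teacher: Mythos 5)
Your argument is correct, but it follows a genuinely different route from the paper. The paper's proof uses a single gadget: the two-sided bar construction $\on{B}_\bullet(\cat{N}(\alpha-,n),\cat{M},QA)$. Because $QA$ is objectwise cofibrant in the absolute model structure, the realization of this Reedy-cofibrant simplicial object computes the homotopy left Kan extension of $A$; but rewriting the same simplicial object as $\on{B}_\bullet(\cat{N}(\alpha-,n),\cat{M},\cat{M})\otimes_{\cat{M}}QA$, realizing, and invoking the classical equivalence $|\on{B}_\bullet(\cat{N}(\alpha-,n),\cat{M},\cat{M})|\to\cat{N}(\alpha-,n)$ together with Proposition~\ref{invariance of operadic coend}(3) identifies it with the derived operadic Kan extension in one stroke. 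You instead take three separate cofibrant replacements ($QA$ as algebra, $Q'A$ as functor, $\tilde P$ as right module) and stitch them together through a $2\times 2$ commutative square, proving each arrow a weak equivalence by hand. Both approaches are valid, and yours is more elementary in that it avoids simplicial objects and realizations entirely; the cost is that you must carry out two independent cell-induction arguments (on $\tilde P$ and on $Q'A$), each of which needs the pushout-product axiom, left properness of the positive model structure, and the cube lemma to propagate through transfinite cell attachments. The bar construction packages this bookkeeping once and for all, which is what makes the paper's proof shorter. One small caution in your write-up: in the cell induction on $\tilde P$, you should be explicit that the comparison takes place in the absolute model structure on $\cat{C}$, since $QA$ is only guaranteed cofibrant there and not in the positive model structure; as written this is implicit, and it is the reason your zigzag through $Q'A$ (which is positively cofibrant) is genuinely needed.
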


\begin{proof}
Let $QA\to A$ be a cofibrant replacement of $A$ as an $\oper{M}$-algebra. We can consider the bar construction of the functor $QA:\cat{M}\to\cat{C}$:
\[\on{B}_\bullet(\cat{N}(\alpha-,n),\cat{M},QA)\]

We know that $QA$ is objectwise cofibrant in the absolute model structure. Therefore, the bar construction is Reedy-cofibrant in the absolute model structure and computes the categorical left Kan extension of $A$.

We can rewrite this simplicial object as
\[\on{B}_\bullet(\cat{N}(\alpha-,n),\cat{M},\cat{M})\otimes_{ \cat{M}}QA\]

The geometric realization is
\[|\on{B}_\bullet(\cat{N}(\alpha-,n),\cat{M},\cat{M})|\otimes_{ \cat{M}}QA\]

It is a classical fact that the map
\[|\on{B}_\bullet(\cat{N}(\alpha-,n),\cat{M},\cat{M})|\to\cat{N}(\alpha-,n)\]
is a weak equivalence of functors on $\cat{M}$. Therefore by \ref{invariance of operadic coend}, the Bar construction is weakly equivalent to $\alpha_!QA$ which is exactly the derived operadic left Kan extension of $A$.
\end{proof}

\section{The little $d$-disk operad}

In this section, we give a traditional definition of the \emph{little $d$-disk operad} $\oper{D}_d$ as well as a definition of the \emph{swiss-cheese operad} $\oper{SC}_d$ which we denote $\oper{D}_d^\partial$. The swiss-cheese operad, originally defined by Voronov (see \cite{voronovswiss} for a definition when $d=2$ and \cite{thomaskontsevich} for a definition in all dimensions), is a variant of the little $d$-disk operad which describes the action of an $\oper{D}_d$-algebra on an $\oper{D}_{d-1}$-algebra.

\subsection*{Space of rectilinear embeddings}
Let $D$ denote the open disk of dimension $d$, $D=\{x\in\mathbb{R}^d, \|x\|<1\}$.

\begin{defi}
Let $U$ and $V$ be connected subsets of $\mathbb{R}^d$, let $i_U$ and $i_V$ denote the inclusion into $\mathbb{R}$. We say that $f:U\to V$ is a \emph{rectilinear embedding} if there is an element $L$ in the subgroup of $\on{Aut}(\mathbb{R}^d)$ generated by translation and dilations with positive factor such that
\[i_V\circ f=L\circ i_U\]
\end{defi}

We extend this definition to disjoint unions of open subsets of $\mathbb{R}^d$:

\begin{defi}
Let $U_1, \ldots, U_n$ and $V_1,\ldots,V_m$ be finite families of connected subsets of $\mathbb{R}^d$. The notation $U_1\sqcup\ldots\sqcup U_n$ denotes the coproduct of $U_1,\ldots U_n$ in the category of topological spaces. We say that a map from $U_1\sqcup\ldots\sqcup U_n$ to $V_1\sqcup\ldots\sqcup V_m$ is a \emph{rectilinear embedding} if it satisfies the following properties:
\begin{enumerate}

\item Its restriction to each component can be factored as $U_i\to V_j\to V_1\sqcup\ldots\sqcup V_m$ where the second map is the obvious inclusion and the first map is a rectilinear embedding $U_i\to V_j$.

\item The underlying map of sets is injective.
\end{enumerate}

We denote by $\Emb_{lin}(U_1\sqcup\ldots\sqcup U_n, V_1\sqcup\ldots\sqcup V_m)$ the subspace of $\Map(U_1\sqcup\ldots\sqcup U_n, V_1\sqcup\ldots\sqcup V_m)$ whose points are rectilinear embeddings.
\end{defi}

Observe that rectilinear embeddings are stable under composition.

\subsection*{The $d$-disk operad}
\begin{defi}
The \emph{linear $d$-disk operad}, denoted $\oper{D}_d$, is the operad in topological spaces whose $n$-th space is $\Emb_{lin}(D^{\sqcup n},D)$ with the composition induced from the composition of rectilinear embeddings.
\end{defi}

There are variants of this definition but they are all equivalent to this one. In the above definition $\oper{D}_d$ is an operad in topological spaces. By applying the functor $\on{Sing}$, we get an operad in $\S$. We use the same notation for the topological and the simplicial operad.

\subsection*{The swiss-cheese operad}

As before, we denote by $D$, the $d$-dimensional disk and by $H$ the $d$-dimensional half-disk
\[H=\{x=(x_1,\ldots,x_d\}),\|x\|<1, x_{d}\geq 0\}\]

\begin{defi}
The \emph{linear $d$-dimensional swiss-cheese operad}, denoted $\oper{D}_{d}^\partial$, has two colors $z$ and $h$ and its mapping spaces are
\begin{align*}
\oper{D}_{d}^\partial(z^{\boxplus n},z)&=\Emb_{lin}(D^{\sqcup n},D)\\
\oper{D}_{d}^\partial(z^{\boxplus n}\boxplus h^{\boxplus m},h)&=\Emb_{lin}^\partial(D^{\sqcup n}\sqcup H^{\sqcup m},H)
\end{align*}
where the $\partial$ superscript means that we restrict to embeddings preserving the boundary.
\end{defi}

\begin{prop}
The full suboperad of $\oper{D}_{d}^\partial$ on the color $z$ is isomorphic to $\oper{D}_{d}$ and the full suboperad on the color $h$ is isomorphic to $\oper{D}_{d-1}$.
\end{prop}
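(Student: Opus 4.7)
The first half of the statement is a definition chase: unfolding the formulas in the definition one reads off $\oper{D}_d^\partial(z^{\boxplus n}, z) = \Emb_{lin}(D^{\sqcup n}, D) = \oper{D}_d(n)$, and operadic composition in $\oper{D}_d^\partial$ is induced by composition of rectilinear embeddings, so it restricts to the defining composition of $\oper{D}_d$ on the $z$-colored operations. Nothing further is needed for this direction.

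For the second half, the plan is to construct an isomorphism of operads by restricting a boundary-preserving rectilinear embedding of copies of $H$ into $H$ to the flat boundary hyperplane $\{x_d = 0\}$, which intersects $H$ in an open $(d-1)$-disk that I denote $D'$. First, I would record that any ambient rectilinear map $L(x) = \lambda x + t$ with $\lambda > 0$ sending $H$ into $H$ and preserving the boundary must have $t_d = 0$, so $L$ is determined by $(\lambda, t_1, \dots, t_{d-1})$ and its restriction to $\{x_d = 0\}$ is the rectilinear self-map $y \mapsto \lambda y + (t_1, \dots, t_{d-1})$ of $D'$. Conversely, given a rectilinear embedding $g(y) = \lambda y + s$ of $D'$ into itself, I would define its lift $\tilde g : H \to H$ by $\tilde g(x) = (\lambda x_1 + s_1, \dots, \lambda x_{d-1} + s_{d-1}, \lambda x_d)$.

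The main step to verify — really the only substantive one — is that the condition $\tilde g(H) \subset H$ is equivalent to $g(D') \subset D'$. I expect this to reduce to the identity
\[\sup_{x \in H} \|\tilde g(x)\| = \sup_{y \in D'} \|g(y)\| = \lambda + \|s\|,\]
which holds because $\|\tilde g(x)\|^2 = \lambda^2 \|x\|^2 + 2\lambda \langle x', s \rangle + \|s\|^2$ (with $x' = (x_1, \dots, x_{d-1})$) attains its supremum over the closed half-disk on the equator $\{x_d = 0\}$, reducing the computation to the $(d-1)$-dimensional one. Once the bijection is in place, extension to a source with $m$ components is componentwise; the injectivity condition on the resulting embedding of the disjoint union is preserved since disjointness of the $\tilde g_i(H)$ is equivalent, via restriction to and extension from the boundary hyperplane, to disjointness of the $g_i(D')$. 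Continuity of the map and its inverse, and compatibility with operadic composition (which on both sides is literally composition of embeddings), are then immediate, yielding the desired isomorphism with $\oper{D}_{d-1}$.
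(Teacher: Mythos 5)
Your argument is correct. For reference, the paper's own ``proof'' of this proposition is literally the single word ``Easy,'' so there is no argument in the source to compare against; your write-up supplies the verification the author chose to elide. The $z$-colored half is indeed a definition chase. For the $h$-colored half, your restriction/lift bijection along the flat boundary hyperplane is the natural construction, and the one substantive point --- that the lift $\tilde g(x)=(\lambda x'+s,\lambda x_d)$ sends $H$ into $H$ if and only if $g(y)=\lambda y+s$ sends $D'$ into $D'$, both being equivalent to $\lambda+\|s\|\le 1$ --- you handle correctly via the observation that $\|\tilde g(x)\|^2=\lambda^2\|x\|^2+2\lambda\langle x',s\rangle+\|s\|^2$ is maximized on the equator $\{x_d=0\}$ of the closed half-disk. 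Your remarks on disjointness passing back and forth and on composition being literal composition of embeddings close the argument.
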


\begin{proof}
Easy.
\end{proof}

\begin{prop}
The evaluation at the center of the disks induces weak equivalences
\begin{align*}
\oper{D}_d(n)&\goto{\simeq}\on{Conf}(n,D)\\
\oper{D}_d^\partial(z^{\boxplus n}\boxplus h^{\boxplus m},h)&\goto{\simeq}\on{Conf}(m,\partial H)\times\on{Conf}(n,H-\partial H)
\end{align*}

\end{prop}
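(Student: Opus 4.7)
The plan is to identify $\oper{D}_d(n)$ with an explicit open subset of $\on{Conf}(n,D)\times\mathbb{R}_{>0}^n$ and to exhibit $\on{Conf}(n,D)$ as a strong deformation retract via the projection.

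A rectilinear embedding $f\colon D^{\sqcup n}\to D$ is uniquely determined by the tuple of centers $c=(c_1,\ldots,c_n)$ of the image disks together with their radii $(r_1,\ldots,r_n)$. This identifies $\oper{D}_d(n)$ with the open subset
\[U=\{(c,r)\in D^n\times\mathbb{R}_{>0}^n\ :\ \|c_i\|+r_i<1\ \text{for all } i,\ \|c_i-c_j\|>r_i+r_j\ \text{for all } i\neq j\},\]
and evaluation at centers is the projection $U\to\on{Conf}(n,D)$; injectivity of the centers is automatic from the disjointness of the images.

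The key observation is that the fiber $U_c$ of this projection over a fixed $c\in\on{Conf}(n,D)$ is \emph{convex}: both families of defining inequalities are linear in the radii, so $U_c$ is an intersection of open half-spaces in $\mathbb{R}_{>0}^n$, and it is nonempty since any sufficiently small radius vector belongs to it. Choosing a continuous positive function $\epsilon\colon\on{Conf}(n,D)\to\mathbb{R}_{>0}$ with $\epsilon(c)<\tfrac{1}{2}(1-\|c_i\|)$ and $\epsilon(c)<\tfrac{1}{2}\|c_i-c_j\|$ for all $i$ and all $i\neq j$, the straight-line homotopy
\[H_t(c,r)=\bigl(c,\,(1-t)r+t(\epsilon(c),\ldots,\epsilon(c))\bigr)\]
lands in $U$ by convexity of the fibers and retracts $U$ onto the image of the section $c\mapsto(c;\epsilon(c),\ldots,\epsilon(c))$, which in turn maps homeomorphically to $\on{Conf}(n,D)$. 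This produces a deformation retract and in particular a weak equivalence.

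The half-disk version is entirely analogous. A boundary-preserving rectilinear embedding $D^{\sqcup n}\sqcup H^{\sqcup m}\to H$ is parametrized by centers and radii, and the boundary-preservation condition forces the centers of the $H$-components to lie in $\partial H$ and those of the $D$-components to lie in $H\setminus\partial H$ (explaining the product factorization of the target). The fibers of the center-evaluation map are again cut out of $\mathbb{R}_{>0}^{n+m}$ by a finite collection of linear inequalities (``each (half-)disk fits in $H$'' and ``images are pairwise disjoint''), hence convex; the same deformation retract argument applies verbatim. The main (and essentially only) point is this convexity of the fibers; once it is in hand, the section and the linear homotopy are completely explicit and no subtle fibration or extension argument is required.
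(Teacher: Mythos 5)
Your proof is correct, and it takes a genuinely different route from the paper's. The paper simply asserts that the evaluation maps are Hurewicz fibrations with contractible fibers (relying implicitly on a fibration criterion such as Cerf's, and on the observation that a fiber is a convex polytope in the radii). You instead bypass the fibration machinery entirely: by parametrizing $\oper{D}_d(n)$ by centers and radii, observing that the radius-fibers of the center-evaluation map are cut out by finitely many inequalities \emph{linear} in $r$, hence convex, and then building an explicit global section $c\mapsto(c,\epsilon(c),\ldots,\epsilon(c))$ with a straight-line homotopy, you produce a strong deformation retraction of $\oper{D}_d(n)$ onto the image of the section. This yields a homotopy equivalence outright, with no appeal to fibration theory or local triviality. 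The trade-off is that the paper's one-liner is more robust (it generalizes to the framed setting used elsewhere in the paper, where no explicit radius coordinates exist), while yours is more elementary and self-contained for the rectilinear model.

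One small imprecision worth flagging: with the paper's definition of $\Emb_{lin}$ (injective underlying map of sets, open disks), the correct defining relations for $U$ are the \emph{non-strict} inequalities $\|c_i\|+r_i\leq 1$ and $\|c_i-c_j\|\geq r_i+r_j$; an open ball $B(c,r)$ is contained in $B(0,1)$ iff $\|c\|+r\leq 1$, and two open balls are disjoint iff the distance between centers is $\geq$ the sum of radii. Consequently $U$ is not open in $D^n\times\mathbb{R}^n_{>0}$ as you claim. This does not affect the argument at all: the fibers are still convex subsets of $\mathbb{R}^n_{>0}$, your $\epsilon$-section lands in them, and the straight-line homotopy still stays inside $U$ by convexity. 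The boundary-preservation analysis for the half-disk case, forcing the centers of $H$-components onto $\partial H$ and those of $D$-components into $H\setminus\partial H$, is correct and the disjointness conditions remain linear in radii for the same reason, so the convexity argument goes through there as well.
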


\begin{proof}
These maps are Hurewicz fibration whose fibers are contractible.
\end{proof}

\section{Homotopy pullback in $\cat{Top}_W$}

The material of this section can be found in \cite{andrademanifolds}. We have included it mainly for the reader's convenience and also to give a proof of \ref{hopbW vs hopb} which is mentioned without proof in \cite{andrademanifolds}.

\subsection*{Homotopy pullback in $\cat{Top}$}
Let us start by recalling the following well-known proposition:

\begin{prop}
Let
\[\xymatrix{
& X\ar[d]^f\\
Y\ar[r]_g&Z}\]
be a diagram in $\cat{Top}$. The homotopy pullback of that diagram can be constructed as the space of triples $(x,p,y)$ where $x$ is a point in $X$, $y$ is a point in $Y$ and $p$ is a path from $f(x)$ to $g(y)$ in $Z$.\hfill$\square$
\end{prop}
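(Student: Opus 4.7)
The plan is to verify this by the standard path-space replacement. Recall that the homotopy pullback of $X \goto{f} Z \gets{g} Y$ can be computed by first replacing $f$ (or $g$) by a Hurewicz fibration and then taking the ordinary pullback; the result is independent up to weak equivalence of which leg we replace. I would use the path-space construction to produce an explicit fibrant replacement.

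First, form the space $Z^I$ of continuous paths $I \to Z$, together with evaluation maps $\mathrm{ev}_0,\mathrm{ev}_1 : Z^I \to Z$. Define
\[W_f = X \times_{f,Z,\mathrm{ev}_0} Z^I,\]
so a point of $W_f$ is a pair $(x,p)$ with $p$ a path starting at $f(x)$. The canonical map $X \to W_f$ sending $x$ to the constant path at $f(x)$ is a homotopy equivalence (with deformation retraction obtained by contracting each path), and the composition $W_f \to Z$ given by $(x,p) \mapsto p(1)$ is a Hurewicz fibration. This is the classical Strøm factorization of $f$.

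Next I would form the strict pullback of $Y \goto{g} Z \gets W_f$. Unraveling the definitions, this pullback consists of triples $(x,p,y)$ with $x \in X$, $y \in Y$, and $p$ a path in $Z$ from $f(x)$ to $g(y)$, which is precisely the space described in the statement. Since we have replaced $f$ by a fibration through a weak equivalence, this strict pullback represents the homotopy pullback.

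The only delicate point is confirming that the Strøm factorization indeed produces a Hurewicz fibration; this is well known and one could cite it rather than reprove it. No step is really a substantive obstacle here — this is a routine verification whose content is packaging of the standard path-space trick.
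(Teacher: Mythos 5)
Your proof is correct and is precisely the standard path-space factorization argument; the paper itself omits a proof of this proposition, labeling it well known, and its subsequent construction of homotopy pullbacks in $\cat{Top}_W$ (via the space $Nf$ of pairs $(x,p)$ with $p$ a path starting at $f(x)$) uses exactly the same device. So you have reconstructed the intended argument.
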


\subsection*{Homotopy pullback in $\cat{Top}_W$}
Let $W$ be a topological space. There is a model structure on $\cat{Top}_W$ the category of topological spaces over $W$ in which cofibrations, fibrations and weak equivalences are reflected by the forgetful functor $\cat{Top}_W\to\cat{Top}$. We want to study homotopy pullbacks in $\cat{Top}_W$

We denote a space over $W$ by a single capital letter like $X$ and we write $p_X$ for the structure map $X\to W$.

Let $I=[0,1]$, for $Y$ an object of $\cat{Top}_W$, we denote by $Y^I$ the cotensor in the category $\cat{Top}_W$. Concretely, $Y^I$ is the space of paths in $Y$ whose image in $W$ is a constant path.

\begin{defi}
Let $f:X\to Y$ be a map in $\cat{Top}_W$. We denote by $Nf$ the following pullback in $\cat{Top}_W$:
\[\xymatrix{
Nf\ar[r]\ar[d]& Y^I\ar[d]\\
X\ar[r]_f&Y}\]
\end{defi}
Concretely, $Nf$ is the space of pairs $(x,p)$ where $x$ is a point in $X$ and $p$ is  a path in $Y$ whose value at $0$ is $f(x)$ and lying over a constant path in $W$. 

We denote by $p_f$, the map $Nf\to Y$ sending a path to its value at $1$.

\begin{prop}
Let
\[\xymatrix{
& X\ar[d]^f\\
Y\ar[r]&Z}\]
be a diagram in $\cat{Top}_W$ in which $X$ and $Z$ are fibrant (i.e. the structure maps $p_X$ and $p_Z$ are fibrations) then the pullback of the following diagram in $\cat{Top}_W$ is a model for the homotopy pullback:
\[\xymatrix{
& Nf\ar[d]_{p_f}\\
Y\ar[r]&Z}\]
\end{prop}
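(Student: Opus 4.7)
The plan is to exhibit $p_f: Nf \to Z$ as a fibrant replacement of $f$ in the model category $\cat{Top}_W$, and then invoke right properness. Concretely, I will show (i) that the section $s_f: X \to Nf$ sending $x$ to the constant path at $f(x)$ is a fibered homotopy equivalence, and (ii) that $p_f$ is a Serre fibration. Since $\cat{Top}_W$ inherits right properness from $\cat{Top}$ (as fibrations, weak equivalences, and pullbacks are reflected by the forgetful functor), the strict pullback of $Y \to Z \leftarrow Nf$ along the fibration $p_f$ computes the homotopy pullback of this cospan, which agrees with the homotopy pullback of the original diagram via the weak equivalence of cospans induced by $s_f$.

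Step (i) is routine: the projection $\pi: Nf \to X$ is a left inverse to $s_f$, and $s_f \circ \pi$ is homotopic to $\mathrm{id}_{Nf}$ through the path-rescaling $(x, \gamma, u) \mapsto (x, \gamma(u \cdot -))$. Each rescaled path still projects to the same constant path in $W$ as $\gamma$, so this homotopy is a morphism in $\cat{Top}_W$.

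The main point is step (ii), and it requires a two-stage lift. Given a Serre lifting problem consisting of $\alpha = (x, \gamma): A \to Nf$ and $H: A \times I \to Z$ with $H(-, 0) = p_f \circ \alpha$, the composite $p_Z \circ H: A \times I \to W$ is a homotopy of $p_X \circ x$. Since $p_X$ is a Serre fibration, we can lift it to $\tilde{x}: A \times I \to X$ extending $x$. What remains is to produce $\tilde{\gamma}: A \times I \times I \to Z$ extending the prescribed boundary data on the three-sided region $\Lambda = (\{0\} \times I) \cup (I \times \{0\}) \cup (I \times \{1\})$ of the square (carrying $\gamma$, $f \circ \tilde{x}$, and $H$ respectively) and projecting in $W$ to $p_Z \circ H$. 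This is a lifting problem for the Serre fibration $p_Z$ against the trivial cofibration $A \times \Lambda \hookrightarrow A \times I \times I$, which admits a solution. The subtlety that distinguishes this argument from its absolute counterpart is exactly the need to combine the classical path-extension trick with fiberwise control over $W$.

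Finally, $Nf$ is itself fibrant in $\cat{Top}_W$, its structure map factoring as $p_f$ followed by $p_Z$, both Serre fibrations; right properness then yields the claim.
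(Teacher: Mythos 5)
Your argument is correct and follows the paper's proof verbatim in strategy: exhibit $s_f\colon X\to Nf$ as a weak equivalence over $Z$, prove $p_f$ is a fibration in $\cat{Top}_W$, and conclude by right properness. The paper leaves the two verifications to the reader as routine analogues of the absolute case, whereas you spell out the two-stage lift (first through $p_X$, then through $p_Z$ against $A\times\Lambda\hookrightarrow A\times I\times I$); that is exactly the content the paper is suppressing, and it uses the fibrancy of $X$ and $Z$ in precisely the two places the hypotheses are needed.
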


Concretely, this proposition is saying that the homotopy pullback is the space of triple $(x,p,y)$ where $x$ is a point in $X$, $y$ is a point in $Y$ and $p$ is a path in $Z$ between $f(x)$ and $g(y)$ lying over a constant path in $W$.

\begin{proof}[Proof of the proposition]
The proof is similar to the analogous result in $\cat{Top}$, it suffices to check that the map $p_f:Nf\to Z$ is a fibration in $\cat{Top}_W$ which is weakly equivalent to $X\to Z$. Since the category $\cat{Top}_W$ is right proper, a pullback along a fibration is always a homotopy pullback.
\end{proof}

From now on when we talk about a homotopy pullback in the category $\cat{Top}_W$, we mean the above specific model.  

\begin{rem}\label{fibration}
The map from the homotopy pullback to $Y$ is a fibration. If $X$, $Y$, $Z$ are fibrants, the homotopy pullback can be computed in two different ways but they are clearly isomorphic. In particular, the map from the homotopy pullback to $X$ is also a fibration.
\end{rem}

\subsection*{Comparison of homotopy pullbacks in $\cat{Top}$ and in $\cat{Top}_W$}

For a diagram
\[\xymatrix{
& X\ar[d]^f\\
Y\ar[r]&Z}\]
in $\cat{Top}$ (resp. $\cat{Top}_W$), we denote by $\on{hpb}(X\rightarrow Z\leftarrow Y)$ (resp. $\on{hpb}_W(X\rightarrow Z\leftarrow Y)$) the above model  of homotopy pullback in $\cat{Top}$ (resp. $\cat{Top}_W$).

Note that there is an obvious inclusion
\[\on{hpb}_W(X\rightarrow X\leftarrow Y)\to\on{hpb}(X\rightarrow Z\leftarrow Y)\]
which sends a path (which happens to be constant in $W$) to itself.

\begin{prop}\label{hopbW vs hopb}
Let $W$ be a topological space and $X\rightarrow Y\leftarrow Z$ be a diagram in $\cat{Top}_W$ in which the structure maps $Z\to W$ and $Y\to W$ are fibrations, then the inclusion
\[\on{hpb}_W(X\rightarrow Y\leftarrow Z)\to\on{hpb}(X\rightarrow Y\leftarrow Z)\]
is a weak equivalence.
\end{prop}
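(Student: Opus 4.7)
The plan is to show that the projection $(x, p, z) \mapsto x$ makes both $\on{hpb}_W$ and $\on{hpb}$ into Serre fibrations over $X$, and that the natural inclusion induces a weak equivalence on every fiber; the conclusion then follows by the five lemma applied to the long exact sequences of homotopy groups.

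Write $\on{hpb} = X \times_Y (Z \times_Y Y^I)$, where the inner pullback uses the evaluation-at-$0$ map $Y^I \to Y$ and everything is taken in $\cat{Top}$. The evaluation-at-$1$ map $Z \times_Y Y^I \to Y$ is a Serre fibration, so its pullback $\on{hpb} \to X$ along $f$ is a Serre fibration. Similarly, $\on{hpb}_W = X \times_Y (Z \times_Y (Y^I)_W)$, where $(Y^I)_W \subset Y^I$ is the subspace of paths whose projection to $W$ is constant. To see that $Z \times_Y (Y^I)_W \to Y$ is a Serre fibration, I would solve the lifting problem in two steps: first use the fibration $p_Z \colon Z \to W$ to lift a compatible path in $Z$ over the given path in $W$, then use the fibration $p_Y \colon Y \to W$ to fill in a homotopy of paths in $Y$ so that each path stays within a single fiber of $p_Y$. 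The second step reduces to a lifting of $p_Y$ against the acyclic cofibration given by the inclusion of three sides of $I \times I$ into the full square. This verification, coordinating path-liftings in $Z$ with homotopy-liftings in $Y$ that remain fiberwise, is the main technical obstacle, and it is precisely where both hypotheses on $p_Y$ and $p_Z$ enter essentially.

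Finally, fix $x \in X$ and set $w := p_X(x)$. The fiber of $\on{hpb} \to X$ over $x$ is the homotopy fiber of $g$ over $f(x)$, while the fiber of $\on{hpb}_W \to X$ over $x$ is the homotopy fiber of $g_w \colon Z_w \to Y_w$ over $f(x)$, where $g_w$ is the restriction of $g$ to the fibers of $p_Z$ and $p_Y$ over $w$. The weak equivalence on fibers follows by applying the pasting lemma for homotopy pullbacks to the vertically stacked diagram
\[
\xymatrix{
Z_w \ar[r] \ar[d]_{g_w} & Z \ar[d]^g \\
Y_w \ar[r] \ar[d] & Y \ar[d]^{p_Y} \\
\{w\} \ar[r] & W
}
\]
The bottom square is a homotopy pullback since $p_Y$ is a fibration and $Y_w = p_Y^{-1}(w)$, and the outer rectangle is a homotopy pullback since $p_Z$ is a fibration and $Z_w = p_Z^{-1}(w)$; hence the top square is a homotopy pullback, and taking homotopy fibers along its horizontal arrows yields the required weak equivalence between the two fibers of the projections to $X$.
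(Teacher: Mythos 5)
Your proof is correct, and it takes a genuinely different route from the paper's. The paper assembles a $3\times 2$ commutative diagram whose left column relates $\on{hopb}_W$ to $W$, whose middle column relates $\on{hopb}$ to $W^I$, and whose right column is $X\to Y$; it then uses right properness of $\cat{Top}_W$ and the pasting lemma for homotopy cartesian squares to propagate the weak equivalence $W\to W^I$ up the left column. You instead exhibit both homotopy pullbacks as Serre fibrations over $X$ (via the projection $(x,p,z)\mapsto x$), identify the fiber of $\on{hpb}\to X$ over $x$ as the homotopy fiber of $g\colon Z\to Y$ over $f(x)$ and the fiber of $\on{hpb}_W\to X$ over $x$ as the homotopy fiber of $g_w\colon Z_w\to Y_w$ over $f(x)$, and show these are weakly equivalent by a pasting argument in $\cat{Top}$; the five lemma then finishes. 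The fiber-identification step is the conceptual heart of your version and it makes the role of the two hypotheses transparent: the fibration hypothesis on $p_Z$ makes the outer rectangle of your stacked diagram homotopy cartesian, the one on $p_Y$ makes the bottom square homotopy cartesian, and cancellation gives that restriction to the fiber over $w$ is a homotopy-pullback operation. Your lifting sketch for showing $Z\times_Y(Y^I)_W\to Y$ (evaluation at $1$) is a fibration is correct and complete in outline: lift the image in $W$ of the given homotopy through $p_Z$, then solve a relative lifting problem against $p_Y$ over $K\times I^2$ with the lift prescribed on $K\times(I\times\{0,1\}\cup\{0\}\times I)$; one only needs to check, as you implicitly do, that the prescribed partial lift is compatible with the projection to $W$, which uses constancy of the given paths over $W$. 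Note that the paper needs the analogous fibration statement too (Remark 4.6, ``$p_f$ is a fibration in $\cat{Top}_W$''), delegated to the preceding proposition with ``the proof is similar''; you are simply making this step explicit. One small wording point: in the final paragraph, the weak equivalence you want comes from comparing the homotopy fibers of the \emph{vertical} arrows $g_w$ and $g$ of the top square, with the comparison map induced by the horizontal arrows; saying ``taking homotopy fibers along its horizontal arrows'' is apt to be misread as taking fibers of $Z_w\to Z$ and $Y_w\to Y$, which is not what you mean.
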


\begin{proof}\footnote{The following proof is due to Ricardo Andrade}
Let us consider the following commutative diagram
\[\xymatrix{
\on{hopb}_W(X\rightarrow Y\leftarrow Z)\ar[d]\ar[r]&\on{hopb}(X\rightarrow Y\leftarrow Z)\ar[d]\ar[r]& X\ar[d]\\
\on{hopb}_W(Y\rightarrow Y\leftarrow Z)\ar[d]\ar[r]&\on{hopb}(Y\rightarrow Y\leftarrow Z)\ar[r]\ar[d]&Y\\
W\ar[r]&W^I& \\
}\]
The map $\on{hopb}(Y\rightarrow Y\leftarrow Z)\to W^I$ sends a triple $(y,p,z)$ to the image of the path $p$ in $W$. The map $W\to W^I$ sends a point in $W$ to the constant map at that point. All other maps should be clear. 

It is straightforward to check that each square is cartesian.

The category $\cat{Top}_W$ is right proper. This implies that a pullback along a fibration is always a homotopy pullback.

Now we make the following three observations: 

(1) The map $\on{hopb}(Y\rightarrow Y\leftarrow Z)\to W^I$ is a fibration. Indeed it can be identified with the obvious map $Y^I\times_Y Z\to W^I\times_W W$ and $Y^I\to W^I$, $Z\to W$ and $Y\to W$ are fibrations. This implies that the bottom square is homotopy cartesian.

(2) The middle row of the diagram $\on{hopb}_W(Y\rightarrow Y\leftarrow Z)\to Y$ is a fibration because of remark \ref{fibration}. A priori it is a fibration in $\cat{Top}_W$ but this is equivalent to being a fibration in $\cat{Top}$. This implies that the big horizontal rectangle is homotopy cartesian. 

(3) The map $\on{hopb}(Y\rightarrow Y\leftarrow Z)\to Y$ is a fibration for the same argument we used in observation (2). This implies that the right-hand side square is homotopy cartesian.

If we combine (2) and (3) we find that the top left-hand side square is homotopy cartesian. If we combine that with (1), we find that the big horizontal rectangle is homotopy cartesian. The map $W\to W^I$ is a weak equivalence. Therefore the map
\[\on{hopb}_W(X\rightarrow Y\leftarrow Z)\to\on{hopb}(X\rightarrow Y\leftarrow Z)\]
is a weak equivalence as well.
\end{proof}

\section{Embeddings between structured manifolds}

This section again owes a lot to \cite{andrademanifolds}. In particular, the definition \ref{framed embedding} can be found in that reference. We then make an analogous definitions of embedding spaces for framed manifolds with boundary.

\subsection*{Topological space of embeddings}
There is a topological category whose objects are $d$-manifolds possibly with boundary and mapping object between $M$ and $N$ is $\Emb(M,N)$, the topological space of smooth embeddings with the weak $C^1$ topology. The reader should look at \cite{hirschdifferential} for a definition of this topology. We want to emphasize that this topology is metrizable, in particular $\Emb(M,N)$ is paracompact.

\begin{rem}
If one is only interested in the homotopy type of this topological space. One could work with the $C^r$-topology for any $r$ (even $r=\infty$) instead of the $C^1$-topology. The choice of taking the weak (as opposed to strong topology) however is a serious one. The two topologies coincide when the domain is compact. However the strong topology does not have continuous composition maps
\[\Emb(M,N)\times\Emb(N,P)\to\Emb(M,P)\]
when $M$ is not compact.
\end{rem}

\subsection*{Embeddings between framed manifolds}

For a manifold $M$ possibly with boundary, we denote by $\on{Fr}(TM)\to M$ the principal $\on{GL}(d)$-bundle of frames of the tangent bundle of $M$.

\begin{defi}
A \emph{framed $d$-manifold} is a pair $(M,\sigma_M)$ where $M$ is a $d$-manifold and $\sigma_M$ is a smooth section of the principal $\on{GL}(d)$-bundle $\on{Fr}(TM)$.
\end{defi}

If $M$ and $N$ are two framed $d$-manifolds, we define a space of framed embeddings denoted by $\Emb_f(M,N)$ as in \cite{andrademanifolds}: 

\begin{defi}\label{framed embedding}
Let $M$ and $N$ be two framed $d$-dimensional manifolds. The \emph{topological space of framed embeddings from $M$ to $N$}, denoted $\Emb_f(M,N)$, is given by the following homotopy pullback in the category of topological spaces over $\Map(M,N)$:
\[\xymatrix
{\Emb_f(M,N)\ar[r]\ar[d] & \Map(M,N)\ar[d]\\
\Emb(M,N)\ar[r] & \Map_{\on{GL}(d)}(\on{Fr}(TM),\on{Fr}(TN))}
\]

The right hand side map is obtained as the composition
\[\Map(M,N)\to\Map_{\on{GL}(d)}(M\times\on{GL}(d),N\times\on{GL}(d))\cong\Map_{\on{GL}(d)}(\on{Fr}(TM),\on{Fr}(TN))\]  
where the first map is obtained by taking the product with $\on{GL}(d)$ and the second map is induced by the identification $\on{Fr}(TM)\cong M\times\on{GL}(d)$ and $\on{Fr}(TN)\cong N\times \on{GL}(d)$.
\end{defi}

It is not hard to show that there are well defined composition maps
\[\Emb_f(M,N)\times\Emb_f(N,P)\to\Emb_f(M,P)\]
allowing the construction of a topological category $f\Mfld_d$ (see \cite{andrademanifolds}).

\begin{rem}
Taking a homotopy pullback in the category of spaces over $\Map(M,N)$ is not strictly necessary. Taking the homotopy pullback of the underlying diagram of spaces would have given the same homotopy type by \ref{hopbW vs hopb}. However, this definition has the psychological advantage that any point in the space $\Emb_f(M,N)$ lies over a point in $\Map(M,N)$ in a canonical way. If we had taken the homotopy pullback in the category of spaces, the resulting object would have had two distinct maps to $\Map(M,N)$, one given by the upper horizontal arrow and the other given as the composition $\Emb_f(M,N)\to\Emb(M,N)\to\Map(M,N)$.
\end{rem}

\subsection*{Embeddings between framed manifolds with boundary}

If $N$ is a manifold with boundary, $n$ a point of the boundary, and $v$ is a vector in $TN_n-T(\partial N)_n$, we say that $v$ is pointing inward if it can be represented as the tangent vector at $0$ of a curve $\gamma:[0,1)\to N$ with $\gamma(0)=n$. 

\begin{defi}
A \emph{$d$-manifold with boundary} is a pair $(N,\phi)$ where $N$ is a $d$-manifold with boundary in the traditional sense and $\phi$ is an isomorphism of $d$-dimensional vector bundles over $\partial N$
\[\phi:T(\partial N)\oplus\mathbb{R}\to TN_{|\partial N}\]
which is required to restrict to the canonical inclusion $T(\partial N)\to TN_{|\partial N}$, and which is such that for any $n$ on the boundary, the point $1\in\mathbb{R}$ is sent to an inward pointing vector through the composition
\[\mathbb{R}\to T_n(\partial N)\oplus\mathbb{R}\goto{\phi_n}T_nN\]
\end{defi}

In other words, our manifolds with boundary are equipped with smooth family of inward pointing vector at each point of the boundary. We require maps between manifolds with boundary to preserve the direction defined by these vectors:

\begin{defi}
Let $(M,\phi)$ and $(N,\psi)$ be two $d$-manifolds with boundary, we define the space $\Emb(M,N)$ to be the topological space of smooth embeddings from $M$ into $N$ sending $\partial M$ to $\partial N$, preserving the splitting of the tangent bundles along the boundary $T(\partial M)\oplus\mathbb{R}\to T(\partial N)\oplus\mathbb{R}$. The topology on this space is the weak $C^1$-topology.
\end{defi}

In particular, if $\partial M$ is empty, $\Emb(M,N)=\Emb(M,N-\partial N)$. If $\partial N$ is empty and $\partial M$ is not empty, $\Emb(M,N)=\varnothing$.

\bigskip
We now introduce framings on manifolds with boundary. We require a framing to interact well with the boundary.

\begin{defi}
Let $(N,\phi)$ be a $d$-manifold with boundary. We say that a section $\sigma_N$ of $\on{Fr}(TN)$ is \emph{compatible with the boundary} if for each point $n$ on the boundary of $N$ there is a splitting-preserving isomorphism
\[T_n(\partial N)\oplus\mathbb{R}\goto{\phi_n}T_nN\goto{\sigma_N}\mathbb{R}^{d-1}\oplus\mathbb{R}\]
whose restriction to the $\mathbb{R}$-summand is multiplication by a positive real number.

A framed $d$-manifold with boundary is a $d$-manifold with boundary together with the datum of a compatible framing.
\end{defi}

\begin{defi}
Let $M$ and $N$ be two framed $d$-manifolds with boundary. We denote by $\Map^\partial_{\on{GL}(d)}(\on{Fr}(TM),\on{Fr}(TN))$ the topological space of $\on{GL}(d)$-equivariant maps sending $\on{Fr}(TM_{|\partial M})$ to $\on{Fr}(TN_{|\partial N})$ and preserving the framings that are compatible with the boundary.
\end{defi}

\begin{defi}\label{framed embedding with boundary}
Let $M$ and $N$ be two framed $d$-manifolds with boundary. The \emph{topological space of framed embeddings} from $M$ to $N$, denoted $\Emb_f(M,N)$, is the following homotopy pullback in the category of topological spaces over $\Map((M,\partial M),(N,\partial N))$
\[\xymatrix
{\Emb_f(M,N)\ar[r]\ar[d] & \Map((M,\partial M),(N,\partial N))\ar[d]\\
\Emb(M,N)\ar[r] & \Map^\partial_{\on{GL}(d)}(\on{Fr}(TM),\on{Fr}(TN))}
\] 
\end{defi}

Concretely, a point in $\Emb_f(M,N)$ is a pair $(\phi,p)$ where $\phi:M\to N$ is an embedding of manifolds with boundary and $p$ is the data at each point $m$ of $M$ of a path between the two trivializations of $T_mM$ (the one given by the framing of $M$ and the one given by pulling back the framing of $N$ along $\phi$). These paths are required to vary smoothly with $m$. Moreover if $m$ is  a point on the boundary, the path between the two trivializations of $T_mM$ must be such that at any time, the first $d-1$-vectors are in $T_m\partial M\subset T_mM$ and the last vector is a positive multiple of the inward pointing vector which is part of our definition of a manifold with boundary.

\section{Homotopy type of spaces of embeddings}

We want to analyse the homotopy type of spaces of embeddings described in the previous section. None of the result presented here are surprising. Some of them are proved in greater generality in \cite{cerftopologie}. However the author of \cite{cerftopologie} is working with the strong topology on spaces of embeddings and for our purposes, we needed to use the weak topology.

As usual, $D$ denotes the $d$-dimensional open disk of radius $1$ and $H$ is the upper half-disk of radius $1$

We will make use of the following two lemmas.

\begin{lemm}\label{filtered}
Let $X$ be a topological space with an increasing filtration by open subsets $X=\bigcup_{n\in\mathbb{N}}U_n$. Let $Y$ be another space and $f:X\to Y$ be a continuous map such that for all $n$, the restriction of $f$ to $U_n$ is a weak equivalence. Then $f$ is a weak equivalence.
\end{lemm}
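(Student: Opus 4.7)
The plan is to reduce the statement to the standard fact that homotopy groups commute with filtered colimits along open inclusions, using that the filtration $X = \bigcup_n U_n$ is increasing by open subsets. The crucial input is compactness: if $K$ is any compact space and $g \colon K \to X$ is continuous, then the open cover $\{g^{-1}(U_n)\}$ of $K$ admits a finite subcover, and by the monotonicity of the filtration $g$ factors through a single $U_n$. Applying this observation to $K = S^k$ and to $K = S^k \times I$ shows that both representatives and null-homotopies in $X$ can be pushed down into some $U_n$.

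To carry this out, I would fix a basepoint $x_0 \in X$; since $x_0 \in U_N$ for some $N$, I replace the filtration by its tail $\{U_{N+n}\}_{n \geq 0}$, which has the same union and contains $x_0$ at every level. The compactness argument then shows that the canonical map
\[\on{colim}_n\, \pi_k(U_n, x_0) \longrightarrow \pi_k(X, x_0)\]
is a bijection for every $k$. By hypothesis $f_* \colon \pi_k(U_n, x_0) \to \pi_k(Y, f(x_0))$ is a bijection for every $n$, and two-out-of-three applied to the commutative triangles
\[\pi_k(U_n, x_0) \longrightarrow \pi_k(U_{n+1}, x_0) \longrightarrow \pi_k(Y, f(x_0))\]
shows that the transition maps in the colimit are also bijections. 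Hence the colimit is isomorphic to any $\pi_k(U_n, x_0)$, and composing yields the desired bijection $f_* \colon \pi_k(X, x_0) \to \pi_k(Y, f(x_0))$. The statement on $\pi_0$ is handled identically, taking $K$ to be a point for surjectivity and the interval $I$ for injectivity.

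There is no real obstacle here; the lemma is essentially the observation that homotopy groups commute with filtered colimits along open inclusions, which follows formally once the compactness reduction is in place. The only minor care needed is the choice of basepoint, resolved by truncating the filtration to a cofinal tail.
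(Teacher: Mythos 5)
Your proof is correct, but it takes a genuinely different route from the paper's. The paper disposes of the lemma in one line by invoking Theorem~\ref{lurie} (Lurie's Theorem A.3.1), a general Čech-style result on open covers: applying it to the functor $n \mapsto U_n$ from the poset $\mathbb{N}$, one gets that $X \simeq \on{hocolim}_n U_n$ (the contractibility of the categories $\cat{A}_x$ is automatic because they are cofinal tails of $\mathbb{N}$), and the conclusion follows since the structure maps $U_n \to U_{n+1}$ and the maps to $Y$ are all equivalences. You instead give a direct, elementary verification at the level of homotopy groups, using compactness of $S^k$ and $S^k \times I$ to show that $\on{colim}_n \pi_k(U_n, x_0) \to \pi_k(X, x_0)$ is a bijection, then closing the argument with two-out-of-three. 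The trade-off is a familiar one: your argument is self-contained, requires no external reference, and is the kind of argument most readers have already seen, while the paper's argument is shorter and delegates the real work to a much more general theorem (which the paper already needs anyway for Lemma~\ref{hocolim of functors}, where the indexing category is not filtered and the compactness trick alone would not suffice). One small point worth making explicit in your write-up: after truncating the filtration so that $x_0 \in U_n$ for all $n$, the basepoint argument handles $\pi_k$ for $k \geq 1$ at each basepoint, but for the $\pi_0$ statement one should note that both surjectivity and injectivity already follow from the bijection $\pi_0(U_n) \to \pi_0(Y)$ for a single $n$, together with the fact that any two points of $X$ eventually lie in a common $U_n$; you gesture at this but it is worth stating cleanly since it is the one place where the ``fix a basepoint'' framing does not literally apply.
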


\begin{proof}
We can apply theorem \ref{lurie}. This theorem implies that $X$ is equivalent to the homotopy colimit of the open sets $U_n$ which immediately yields the desired result.
\end{proof}

\begin{lemm}{(Cerf)}\label{locally trivial fibration}
Let $G$ be a topological group and let $p:E\to B$ be a map of $G$-topological spaces. Assume that for any $x\in B$, there is a neighborhood of $x$ on which there is a section of the map
\begin{align*}
G&\to B\\
g&\mapsto g.x
\end{align*}
Then if we forget the action, the map $p$ is a locally trivial fibration. In particular, if $B$ is paracompact, it is a Hurewicz fibration.
\end{lemm}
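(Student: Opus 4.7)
The plan is to use the local section $\sigma$ to build an explicit local trivialization of $p$, and then to deduce the Hurewicz property from the standard theorem of Dold that numerable locally trivial fibrations are Hurewicz.

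First, I would fix $x \in B$, a neighborhood $U$ of $x$, and a section $\sigma : U \to G$ of the orbit map, so that $\sigma(u) \cdot x = u$ for every $u \in U$. Writing $F = p^{-1}(x)$, I would define
\[
\Phi : U \times F \longrightarrow p^{-1}(U), \qquad \Phi(u, e) = \sigma(u) \cdot e,
\]
and
\[
\Psi : p^{-1}(U) \longrightarrow U \times F, \qquad \Psi(e) = \bigl(p(e),\, \sigma(p(e))^{-1} \cdot e\bigr).
\]
Since $p$ is $G$-equivariant, $p(\sigma(u) \cdot e) = \sigma(u) \cdot p(e) = \sigma(u) \cdot x = u$, which simultaneously places $\Phi(u,e)$ inside $p^{-1}(U)$ and forces $\sigma(p(e))^{-1} \cdot e$ to lie in $F$. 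Continuity of $\Phi$ and $\Psi$ follows from continuity of the action $G \times E \to E$, of $p$, of $\sigma$, and of inversion in $G$. A direct computation gives $\Phi \circ \Psi = \mathrm{id}$ and $\Psi \circ \Phi = \mathrm{id}$, so $\Phi$ is a homeomorphism over $U$ and exhibits a local trivialization of $p$ near $x$.

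Varying $x$ produces a trivializing open cover of $B$, so $p$ is a locally trivial fibration, which is the first half of the conclusion. For the second half, when $B$ is paracompact the trivializing cover admits a locally finite refinement together with a subordinate partition of unity, so the cover is numerable. Dold's theorem on numerable fibrations (from \emph{Partitions of unity in the theory of fibrations}) then implies that $p$ has the homotopy lifting property against every space, i.e.\ is a Hurewicz fibration.

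The only step that really needs attention is the verification that $\Phi$ and $\Psi$ are mutually inverse and continuous; beyond that, everything reduces to standard facts about $G$-actions and numerable fibrations, so there is no serious obstacle.
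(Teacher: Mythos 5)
Your argument is correct and complete, and it fills in details that the paper omits: the paper's proof is just a pointer to Cerf's original article, whereas you give the standard explicit local-trivialization-plus-Dold argument. The key computation that makes $\Phi$ and $\Psi$ land in the right places and be mutually inverse is exactly the $G$-equivariance of $p$ together with the defining property $\sigma(u)\cdot x = u$ of the local section, and you carry this out correctly: $p(\sigma(u)\cdot e) = u$, $\sigma(p(e))^{-1}\cdot p(e) = x$, and the two composites collapse by associativity of the action. Continuity is as you say. For the Hurewicz conclusion, you correctly identify that paracompactness is used only to make the trivializing cover numerable, at which point Dold's theorem applies; it is worth noting (though it does not affect the logic) that the spaces of embeddings to which this lemma is applied in the paper are metrizable, so paracompactness is automatic. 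No gaps.
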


\begin{proof}
See \cite{cerftheoremes}.
\end{proof}

Let $\Emb^*(D,D)$  (resp. $\Emb^{\partial,*}(H,H)$) be the topological space of self embeddings of $D$ (resp. $H$) mapping $0$ to $0$.

\begin{prop}
The ``derivative at the origin'' map
\[\Emb^*(D,D)\to\on{GL}(d)\]
is a Hurewicz fibration and a weak equivalence. The analogous result for the map
\[\Emb^*(H,H)\to\on{GL}(d-1)\]
also holds.
\end{prop}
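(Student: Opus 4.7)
The plan addresses the disk case first; the half-disk case is analogous. I will prove the Hurewicz fibration and weak equivalence properties by separate arguments sharing a common auxiliary construction.

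For the Hurewicz fibration property, the plan is to invoke Cerf's lemma \ref{locally trivial fibration}. Fix once and for all a radial smooth compression $\psi : \mathbb{R}^d \to D$ of the form $\psi(x) = h(|x|) \cdot x$, where $h : [0, \infty) \to (0, 1]$ is smooth, equals $1$ on $[0, 1/2]$, and is such that $t h(t)$ is strictly increasing with limit $1$ as $t \to \infty$; then $\psi$ is a diffeomorphism from $\mathbb{R}^d$ onto $D$ that is the identity on $B_{1/2}$. Let $G = \on{GL}(d)$ act on $\on{GL}(d)$ by left multiplication and on $\Emb^*(D, D)$ by $g \cdot f := \psi \circ g \circ \psi^{-1} \circ f$; this is a group action, and since $D\psi(0) = D\psi^{-1}(0) = \on{Id}$, the derivative-at-$0$ map is equivariant. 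For every $A \in \on{GL}(d)$, the orbit map $g \mapsto g \cdot A$ admits the global continuous section $B \mapsto B A^{-1}$, so Cerf's lemma applies and gives that the derivative map is a locally trivial fibration; since $\on{GL}(d)$ is paracompact, it is then a Hurewicz fibration.

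For the weak equivalence, it suffices to show that the fiber over $\on{Id}$ is contractible, since all fibers are then contractible by equivariance, and a Hurewicz fibration with contractible fibers over a paracompact base is a weak equivalence. Here the plan is to apply the Alexander trick: for $f \in \Emb^*(D, D)$ with $f(0) = 0$ and $Df(0) = \on{Id}$, set $f_t(x) := t^{-1} f(t x)$ for $t \in (0, 1]$ and $f_0(x) := x$. This defines a continuous family in $\Emb^*(D, \mathbb{R}^d)$ in the weak $C^1$ topology; continuity at $t = 0$ is the definition of the derivative at the origin. Post-composing with $\psi$ yields the continuous family $\psi \circ f_t$ inside $\Emb^*(D, D)$, connecting $\psi \circ f$ at $t = 1$ to $\psi|_D$ at $t = 0$, and each $\psi \circ f_t$ still has derivative $\on{Id}$ at $0$ because $\psi$ is the identity near the origin. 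An additional isotopy from $f$ to $\psi \circ f$ within the fiber is given by $\psi_s \circ f$, where $\psi_s(x) := ((1-s) + s h(|x|)) \cdot x$ is a radial isotopy in $\Emb^*(D, D)$ between $\id_D$ and $\psi|_D$ (each $\psi_s$ has derivative $\on{Id}$ at $0$ because $h(0) = 1$, and $|\psi_s(x)| < 1$ by convex combination). Concatenating produces a contraction of the fiber to the point $\psi|_D$.

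For the half-disk, all constructions restrict directly: $\psi$ being radial preserves the closed upper half-space and maps it diffeomorphically to $H$, the Alexander rescaling of a boundary-preserving embedding is again boundary-preserving, and the derivative at $0 \in \partial H$ of a boundary-preserving self-embedding of $H$ is block-diagonal, lying in the subgroup $\on{GL}(d-1) \times \mathbb{R}_{>0} \subset \on{GL}(d)$. The projection $\on{GL}(d-1) \times \mathbb{R}_{>0} \to \on{GL}(d-1)$ is a trivial Hurewicz fibration with contractible fibers, so to prove the statement about the map to $\on{GL}(d-1)$ it is enough to prove its analogue for the full derivative map into $\on{GL}(d-1) \times \mathbb{R}_{>0}$. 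The argument of the previous two paragraphs, with $G = \on{GL}(d-1) \times \mathbb{R}_{>0}$ acting by $\psi$-conjugation, handles this. The main technical obstacle throughout is keeping every auxiliary construction — the conjugated $G$-action, the Alexander rescaling, and the interpolation to the identity — inside $\Emb^*(D, D)$ rather than drifting into the larger space $\Emb^*(D, \mathbb{R}^d)$, where the whole proof would be immediate; this is resolved once and for all by the single careful choice of $\psi$, which is the identity near the origin (so it leaves derivatives there untouched) while globally compressing $\mathbb{R}^d$ into $D$.
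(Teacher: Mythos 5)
Your proof is correct and takes the same approach as the paper: Cerf's lemma \ref{locally trivial fibration} for the Hurewicz-fibration part and the Alexander rescaling trick for contractibility of the fibers. The main difference is that you fill in two points the paper leaves implicit — the paper invokes a $\on{GL}(d)$-action on $\Emb^*(D,D)$ without specifying it (a naive linear action does not preserve $D$), and it switches from $D$ to $\mathbb{R}^d$ for the Alexander trick without comment; your single device of the radial compression $\psi$, chosen to be the identity near $0$, handles both of these cleanly, and your explicit treatment of the $\on{GL}(d-1)\times\mathbb{R}_{>0}$ factor in the half-disk case makes precise what the paper dismisses with ``the proof for $H$ is similar.''
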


\begin{proof}
Let us first show that the derivative map
\[\Emb^*(D,D)\to\on{GL}(d)\]
is a Hurewicz fibration. 

The group $\on{GL}(d)$ acts on the source and the target and the derivative map commutes with this action. We use lemma \ref{locally trivial fibration}, it suffices to show that for any $u\in \on{GL}(d)$, we can define a section of the multiplication by $u$ map
\[\on{GL}(d)\to\on{GL}(d)\]
which is trivial.

Now we show that the fibers are contractible. Let $u\in\on{GL}(d)$ and let $\Emb^u(D,D)$ be the space of embedding whose derivative at $0$ is $u$, we want to prove that $\Emb^u(D,D)$ is contractible. It is equivalent but more convenient to work with $\mathbb{R}^d$ instead of $D$. Let us consider the following homotopy:

\begin{align*}
\Emb^u(\mathbb{R}^d,\mathbb{R}^d)\times(0,1]&\to\Emb^u(\mathbb{R}^d,\mathbb{R}^d)\\
(f,t)&\mapsto \left(x\mapsto \frac{f(tx)}{t}\right)
\end{align*}

At $t=1$ this is the identity of $\Emb^u(D,D)$. We can extend this homotopy by declaring that its value at $0$ is constant with value the linear map $u$. Therefore, the inclusion $\{u\}\to\Emb^u(D,D)$ is a deformation retract.

The proof for $H$ is similar.
\end{proof}

\begin{prop}
Let $M$ be a manifold (possibly with boundary). The map
\[\Emb(D,M)\to \on{Fr}(TM)\]
is a weak equivalence and a Hurewicz fibrations. Similarly the map
\[\Emb(H,M)\to\on{Fr}(T\partial M)\]
is a weak equivalence and a Hurewicz fibration.
\end{prop}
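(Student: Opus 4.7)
The plan is to extend the argument of the previous proposition from self-embeddings of $D$ (resp.\ $H$) to embeddings into a general manifold $M$, using that proposition as the computation of the fiber in a chart. I will write the argument for $\Emb(D,M)\to\on{Fr}(TM)$; the boundary case is parallel, using a half-disk chart adapted to the framing.

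For the Hurewicz fibration claim I would apply Lemma \ref{locally trivial fibration} with $G=\on{Diff}(M)$ acting on $\Emb(D,M)$ by post-composition and on $\on{Fr}(TM)$ by the differential; the derivative-at-$0$ map is $G$-equivariant. To supply the local section hypothesis near a frame $v\in\on{Fr}(T_mM)$, I would use time-one flows of compactly supported vector fields on $M$ in a chart around $m$: the resulting map from a neighborhood of $0$ in the space of vector fields to $\on{Fr}(TM)$ is a submersion at $0$ onto a neighborhood of $v$, hence admits a continuous local right inverse. Paracompactness of the smooth manifold $\on{Fr}(TM)$ then upgrades the conclusion of Lemma \ref{locally trivial fibration} to a Hurewicz fibration.

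For the weak equivalence, since the map is a fibration it suffices to contract each fiber $F_v=\{f\in\Emb(D,M):f(0)=m,\ df_0=v\}$. Fix a chart $\phi:\mathbb{R}^d\hookrightarrow M$ with $\phi(0)=m$ and $d\phi_0=v$, and set $V=\phi(\mathbb{R}^d)$. Continuity of each $f\in F_v$ at $0$ gives the increasing open exhaustion $F_v=\bigcup_{n\geq 2}U_n$ with $U_n=\{f\in F_v:f(\overline{D_{1/n}})\subset V\}$, so by Lemma \ref{filtered} it is enough to contract each $U_n$. On $U_n$ the scaling homotopy
\[
H_s(f)(x)=\phi\bigl(\phi^{-1}(f(sx))/s\bigr)
\]
from the previous proposition is defined for $s\in(0,1/n]$, stays in $V\cap U_n$, preserves the jet $(m,v)$ at $0$, and extends continuously to $s=0$ with limit the constant embedding $\phi|_D$; this gives a path in $U_n$ from $H_{1/n}(\,\cdot\,)$ to $\phi|_D$. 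Closing up to a genuine deformation retraction requires joining the identity of $U_n$ to $f\mapsto H_{1/n}(f)$ by a path in $U_n$, which I would obtain by a smooth ambient-isotopy interpolation in chart coordinates: $f$ and $H_{1/n}(f)$ both lie in $V$ on $\overline{D_{1/n}}$ and share the same $1$-jet at $0$, so a straight-line combination in the chart, damped by a bump function supported in a slightly larger compact subset of $\overline{D_{2/n}}$ and equal to the identity outside, produces embeddings in $U_n$ for all intermediate times.

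The main obstacle I foresee is the last interpolation step: checking that the bump-function convex combination remains a smooth embedding throughout the homotopy, and that the whole construction depends continuously on $f\in U_n$ in the weak $C^1$-topology. This is a standard but delicate argument; as a fallback one could replace this step by the classical local contractibility of embedding spaces in the sense of Cerf and a finer filtration of $F_v$, which is precisely the setting of \cite{cerftopologie} in the strong topology, transferred to the weak topology via the fact that our domain $D$ is exhausted by the compact subdisks $\overline{D_{1/n}}$ on which the two topologies agree.
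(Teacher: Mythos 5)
Your outline for the Hurewicz fibration claim is essentially sound and aligned with the paper's approach, which also invokes Lemma \ref{locally trivial fibration}. There is a small difference worth noting: the paper takes $G=\on{Diff}(M)\times\on{GL}(d)$ (the second factor acting on $D$ at the source), so the local-section hypothesis reduces to finding a local section of the evaluation map $\on{Diff}(M)\to M$, which is produced by an explicit compactly supported diffeomorphism formula $\phi(x)(u)=f(u)x+u$. You instead let $\on{Diff}(M)$ alone act and need local sections of the orbit map onto $\on{Fr}(TM)$, which you obtain by a submersion-type argument with time-one flows; this is plausible but less elementary to justify rigorously in the infinite-dimensional setting, and in particular ``admits a continuous local right inverse'' needs an explicit construction rather than an appeal to an implicit function theorem.

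The weak equivalence part is where you have a genuine gap, and you correctly sense it. You attempt to contract each fiber $F_v$ of the derivative map directly. The chart-conjugated scaling $H_s(f)(x)=\phi\bigl(\phi^{-1}(f(sx))/s\bigr)$ does preserve the $1$-jet, but on the stratum $U_n$ it is only defined for $s\in(0,1/n]$, so it provides a homotopy from $H_{1/n}$ to the constant $\phi|_D$ -- not a homotopy starting at the identity of $U_n$. The bump-function interpolation you sketch to join $f$ to $H_{1/n}(f)$ does not work as stated: these two embeddings differ everywhere on $D$, not only near $0$, so an interpolation ``equal to the identity outside'' a compact set cannot terminate at $H_{1/n}(f)$; and without the bump you have no control over injectivity of the straight-line combination away from $0$. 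Your fallback to Cerf-style local contractibility of embedding spaces would indeed close the gap, but it is a substantially heavier ingredient.

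The paper avoids the problem entirely by a different decomposition. Since both $\Emb(D,M)\to M$ (evaluation at $0$, a composite of the two fibrations) and $\on{Fr}(TM)\to M$ are Hurewicz fibrations, it suffices to show that the map on fibers $\Emb^m(D,M)\to\on{Fr}(T_mM)$ is a weak equivalence for each $m$. Crucially, the fiber $\Emb^m(D,M)$ fixes only the center point, not the derivative, so the naive scaling $g\mapsto g(t\,\cdot\,)$ for $t\in[1/n,1]$ stays inside the fiber and inside each $U_n$, giving a clean homotopy equivalence $U_1\simeq U_n$; then $U_1\cong\Emb^0(D,D)$ and the previous proposition finishes the job. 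So the lesson is: compare the two fibrations over $M$ and reduce to a statement about $\Emb^m(D,M)$, rather than attacking the fibers of the derivative map itself, which forces you to fix the $1$-jet and breaks the simple rescaling homotopy.
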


\begin{proof}
The fact that these maps are Hurewicz fibrations will follow again from lemma \ref{locally trivial fibration}. We will assume that $M$ has a framing because this will make the proof easier and we will only apply this result with framed manifolds. However the result remains true in general.

Let us do the proof for $D$. The derivative map
\[\Emb(D,M)\to\on{Fr}(TM)\cong M\times\on{GL}(d)\]
is equivariant with respect to the action of the group $\on{Diff}(M)\times\on{GL}(d)$. It suffices to show that for any $x\in \on{Fr}(TM)$, the ``action on $x$'' map
\[\on{Diff}(M)\times\on{GL}(d)\to M\times\on{GL}(d)\]
has a section in a neighborhood of $x$. Clearly it is enough to show that for any $x$ in $M$, the ``action on $x$'' map
\[\on{Diff}(M)\to M\]
has a section in a neighborhood of $x$

We can restrict to neighborhoods $U$ such that $U\subset \bar{U}\subset V\subset M$ in which $U$ and $V$ are diffeomorphic to $\mathbb{R}^d$.

Let us consider the group $\on{Diff}^c(V)$ of diffeomorphisms of $V$ that are the identity outside a compact subset of $V$. Clearly we can prolong one of these diffeomorphism by the identity and there is a well define inclusion of topological groups
\[\on{Diff}^c(V)\to\on{Diff}(M)\]
Now we have made the situation local. It is equivalent to construct a map
\[\phi:D\to\on{Diff}^c(\mathbb{R}^d)\]
with the property that $\phi(x)(0)=x$.

Let $f$ be a smooth function from $\mathbb{R}^d$ to $\mathbb{R}$ which is such that
\begin{itemize}
\item $f(0)=1$
\item $\|\nabla f\|\leq \frac{1}{2}$
\item $f$ is compactly supported
\end{itemize}

We claim that 
\[\phi(x)(u)=f(u)x+u\] 
satisfies the requirement which proves that 
\[\Emb(D,M)\to\on{Fr}(TM)\]
is a Hurewicz fibration. The case of $H$ is similar.

Now let us prove that this derivative maps are weak equivalences. 

We have the following commutative diagram
\[\xymatrix
{\Emb(D,M)\ar[r]\ar[d] & \on{Fr}(TM)\ar[d]\\
M\ar[r]^= & M}
\]

Both vertical maps are Hurewicz fibration, therefore it suffices to check that the induced map on fibers is a weak equivalence. We denote by $\Emb^m(D,M)$ the subspace consisting of those embeddings sending $0$ to $m$. Hence all we have to do is prove that for any point $m\in M$ the derivative map $\Emb^m(D,M)\to\on{Fr}T_mM$ is a weak equivalence. If $M$ is $D$ and $m=0$, this is the previous proposition. In general, we pick an embedding $f:D\to M$ centered at $m$. Let $U_n\subset \Emb^m(D,M)$ be the subspace of embeddings mapping $D_n$ to the image of $f$ (where $D_n\subset D$ is the subspace of points of norm at most $1/n$). Clearly $U_n$ is open in $\Emb^m(D,M)$ and $\bigcup _n U_n=\Emb^m(D,M)$, by \ref{filtered} it suffices to show that the map $U_n\to\on{Fr}(T_mM)$ is a weak equivalence for all $n$.

Clearly the inclusion $U_1\to U_n$ is a deformation retract for all $n$, therefore, it suffices to check that $U_1\to\on{Fr}(T_mM)$ is a weak equivalence. Equivalently, it suffices to prove that $\Emb^0(D,D)\to\on{GL}(d)$ is a weak equivalence and this is exactly the previous proposition.
\end{proof}

This result extends to disjoint union of copies of $H$ and $D$ with a similar proof.

\begin{prop}
The derivative map
\[\Emb(D^{\sqcup p}\sqcup H^{\sqcup q},M)\to\on{Fr}(T\on{Conf}(p,M-\partial M))\times \on{Fr}(T\on{Conf}(q,\partial M))\]
is a weak equivalence and a Hurewicz fibration.
\end{prop}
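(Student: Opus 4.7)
The proof follows the template of the previous proposition, bootstrapping the single-disk case to the multi-disk case using Cerf's lemma \ref{locally trivial fibration} and the filtration lemma \ref{filtered}. First I would establish that the derivative map is a Hurewicz fibration. Set
\[G = \on{Diff}^\partial(M) \times \on{GL}(d)^p \times \on{GL}(d-1)^q,\]
where $\on{Diff}^\partial(M)$ denotes boundary-preserving self-diffeomorphisms of $M$. The group $G$ acts on the source by postcomposition with $\on{Diff}^\partial(M)$ and by precomposition on each component with the standard linear actions of $\on{GL}(d)$ on $D$ and of $\on{GL}(d-1) \subset \on{GL}(d)$ on $H$ (preserving the boundary), and it acts on the target in the evident way; the derivative map is $G$-equivariant. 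To verify the hypothesis of Cerf's lemma, I need to produce, for each target point $x$, a local section of $g \mapsto g\cdot x$ near $x$. Projecting on the $\on{GL}$-factors reduces this to moving a single configuration by a boundary-preserving diffeomorphism, which, as in the previous proposition, is achieved by bump functions supported in pairwise disjoint coordinate neighborhoods around each center.

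Next I would reduce the weak-equivalence claim to a statement about fibers over configuration space. Consider the commutative square
\[\xymatrix{
\Emb(D^{\sqcup p}\sqcup H^{\sqcup q},M)\ar[r]\ar[d] & \on{Fr}(T\on{Conf}(p,M-\partial M))\times\on{Fr}(T\on{Conf}(q,\partial M))\ar[d]\\
\on{Conf}(p,M-\partial M)\times\on{Conf}(q,\partial M)\ar@{=}[r] & \on{Conf}(p,M-\partial M)\times\on{Conf}(q,\partial M)
}\]
in which the left vertical arrow is evaluation at the centers of the disks and half-disks. The right vertical arrow is the projection of a principal bundle, hence a Hurewicz fibration; the left one is also a Hurewicz fibration, by the same Cerf-lemma argument as in the first step (now using only the $\on{Diff}^\partial(M)$-factor of the action). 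It therefore suffices to show that the induced map on the fiber over a fixed configuration $c=(m_1,\ldots,m_p,n_1,\ldots,n_q)$ is a weak equivalence.

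Let $F_c$ be that fiber; the corresponding fiber on the right is $\on{GL}(d)^p \times \on{GL}(d-1)^q$, using the framings of $M$ and $\partial M$ to identify each $\on{Fr}(T_{m_i}M)$ and $\on{Fr}(T_{n_j}\partial M)$ with the corresponding linear group. Choose pairwise disjoint coordinate charts $V_i \cong \mathbb{R}^d$ around $m_i$ and $W_j \cong H$ around $n_j$ (so $W_j \cap \partial M$ corresponds to the boundary of $H$). For each positive integer $k$ let $U_k \subset F_c$ be the open subspace of those embeddings $\phi$ such that $\phi(D_i)$ is contained in the radius-$1/k$ sub-disk of $V_i$ centered at $m_i$ and $\phi(H_j)$ in the analogous radius-$1/k$ sub-half-disk of $W_j$. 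One has $F_c = \bigcup_k U_k$, so by lemma \ref{filtered} it suffices to show each $U_k \to \on{GL}(d)^p \times \on{GL}(d-1)^q$ is a weak equivalence. For $\phi \in U_k$ the constraints force the components to land in pairwise disjoint neighborhoods, so evaluation on each factor yields a homeomorphism
\[U_k \cong \prod_{i=1}^p \Emb^{m_i}(D, V_i^{(k)}) \times \prod_{j=1}^q \Emb^{n_j,\partial}(H, W_j^{(k)}),\]
where $V_i^{(k)}, W_j^{(k)}$ denote the sub-(half-)disks of radius $1/k$ and the superscripts indicate embeddings sending $0$ to the marked point and preserving the boundary structure. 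The derivative map is then a product of the maps $\Emb^{m_i}(D, V_i^{(k)}) \to \on{GL}(d)$ and $\Emb^{n_j,\partial}(H, W_j^{(k)}) \to \on{GL}(d-1)$, each a weak equivalence by the previous proposition.

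The main obstacle, I expect, is the careful verification of the Hurewicz fibration property: producing compactly supported diffeomorphisms realizing a given local variation of configurations, simultaneously around pairwise disjoint centers and respecting the boundary structure. The rest of the argument is a mechanical combination of Cerf's trick and the filtration lemma, generalizing the single-disk proof in the obvious way.
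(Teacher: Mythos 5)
The overall strategy — Cerf's lemma for the fibration property, then a filtration plus the single-disk result for the weak equivalence — is exactly what the paper intends when it remarks that the proof ``extends \ldots with a similar proof'', and the group-action and commutative-square setup are fine. But the filtration step contains a genuine error: you define $U_k$ by requiring that the entire image $\phi(D_i)$ land in the radius-$1/k$ sub-disk $V_i^{(k)}$ of the chart, so the $U_k$ form a \emph{decreasing} chain, and $\bigcup_k U_k$ is not $F_c$ (for any fixed embedding $\phi$, the image $\phi(D_i)$ has a definite size and escapes $V_i^{(k)}$ once $k$ is large). Lemma \ref{filtered} requires an increasing open filtration that exhausts the space, so it cannot be applied here. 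Moreover, the condition $\phi(D_i)\subset V_i^{(k)}$ constrains $\phi$ on all of the non-compact disk, so it is not even open in the weak $C^1$-topology.

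The fix is to shrink the \emph{source} rather than the target, exactly as in the paper's single-disk proof: let $U_k$ be the set of $\phi\in F_c$ with $\phi(D_i^{(k)})\subset V_i$ and $\phi(H_j^{(k)})\subset W_j$, where $D_i^{(k)}$ and $H_j^{(k)}$ are the radius-$1/k$ sub-(half-)disks of the $i$-th and $j$-th source pieces. These conditions involve only the compact sub-(half-)disks, so each $U_k$ is open; the $U_k$ increase, and $\bigcup_k U_k=F_c$ since any $\phi$ is continuous and sends the centers to the $m_i$ and $n_j$. Then $U_1$ \emph{is} the product $\prod_i\Emb^{m_i}(D,V_i)\times\prod_j\Emb^{n_j,\partial}(H,W_j)$ (here disjointness of the charts is what makes the restriction map a homeomorphism), and a radial shrinking homotopy exhibits $U_1\hookrightarrow U_k$ as a deformation retract for every $k$. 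Combining with the single-(half-)disk result on $U_1$ gives the weak equivalence on the fiber, and the rest of your argument goes through.
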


In the case of framed embeddings, we have the following result:

\begin{prop}\label{framed embeddings of disks}
The evaluation at the center of the disks induces a weak equivalence
\[\Emb_f(D^{\sqcup p}\sqcup H^{\sqcup q},M)\to\on{Conf}(p,M-\partial M)\times \on{Conf}(q,\partial M)\]
\end{prop}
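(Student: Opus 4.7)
The plan is to reduce the statement to the analogous statement for unframed embeddings just established, via a comparison of homotopy pullbacks. Write $M' = D^{\sqcup p} \sqcup H^{\sqcup q}$, $A = \on{Conf}(p, M-\partial M)\times \on{Conf}(q, \partial M)$ and $B = \on{Fr}(T\on{Conf}(p, M-\partial M))\times \on{Fr}(T\on{Conf}(q, \partial M))$, so that the derivative map $d \colon \Emb(M', M) \to B$ is a weak equivalence by the previous proposition. The framing of $M$ determines a canonical section $s \colon A \to B$ of the principal bundle $B \to A$.

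First I apply Proposition \ref{hopbW vs hopb} to the defining square of $\Emb_f(M', M)$ from Definition \ref{framed embedding with boundary}, so that $\Emb_f(M', M)$ becomes an ordinary homotopy pullback in $\cat{Top}$. The hypothesis is met because the map $\Map^\partial_{\on{GL}(d)}(\on{Fr}(TM'), \on{Fr}(TM)) \to \Map((M', \partial M'), (M, \partial M))$ is a Hurewicz fibration, being induced by the principal $\on{GL}(d)$-bundle $\on{Fr}(TM) \to M$, and the other structure map is the identity.

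I then construct a weak equivalence, natural over $\Emb(M', M)$,
\[\Emb_f(M', M) \xrightarrow{\simeq} \on{hpb}\bigl(\Emb(M',M) \xrightarrow{d} B \xleftarrow{s} A\bigr).\]
A point of $\Emb_f(M', M)$ unpacks, using the constant-path-over-$\Map$ condition, into a pair $(\phi, p)$ with $\phi$ an embedding and $p$ a path in the fiber $\Map(M', \on{GL}(d))^\partial$ of the Hurewicz fibration $\Map^\partial_{\on{GL}(d)} \to \Map$ over $\phi$, joining $d\phi$ to the constant map at the identity. I send this to $(\phi, \tilde p, \phi(\text{centers}))$, where $\tilde p$ is the reversed path in $B$ obtained by evaluating $p$ at the centers of the components of $M'$. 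Since the components of $M'$ are contractible, evaluation at centers yields a weak equivalence from $\Map(M', \on{GL}(d))^\partial$ onto the fiber of $B \to A$; this identifies, up to weak equivalence, the fiber of $\Emb_f(M', M) \to \Emb(M', M)$ over $\phi$ with the homotopy fiber of $s \colon A \to B$ over $d\phi$, matching the fibers of the comparison map.

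The final step is to invoke the homotopy invariance of the homotopy pullback to obtain the collapse
\[\on{hpb}\bigl(\Emb(M',M) \xrightarrow{d} B \xleftarrow{s} A\bigr) \simeq \on{hpb}\bigl(B \xrightarrow{=} B \xleftarrow{s} A\bigr) \cong A,\]
since $d$ is a weak equivalence. Composing the three weak equivalences gives the asserted one. The main technical obstacle will be verifying that the fiberwise weak equivalence described above upgrades to a global weak equivalence of total spaces; this requires both projections to $\Emb(M', M)$ to be fibrations, which can be arranged by passing to a fibrant replacement in $\cat{Top}_{\Emb(M', M)}$ if needed.
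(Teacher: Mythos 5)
Your proof is correct and follows essentially the same strategy as the paper: after applying Proposition \ref{hopbW vs hopb} to express $\Emb_f$ as an ordinary homotopy pullback, both arguments replace the cospan by a weakly equivalent one built from the configuration space, its ``frame bundle,'' and the derivative weak equivalence established in the preceding unframed proposition. The paper performs the cospan replacement in one terse step (simply asserting the new diagram), whereas you split it into two stages with an explicit comparison map; the extra care you flag about fibrancy over $\Emb(M', M)$ is already supplied by Remark \ref{fibration}, so no fibrant replacement is needed.
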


\begin{proof}
To simplify notations, we restrict to studying $\Emb_f(H,M)$, the general case is similar. By definition \ref{framed embedding with boundary} and proposition \ref{hopbW vs hopb}, we need to study the following homotopy pullback:
\[\xymatrix
{ & \Map((H,\partial H),(M,\partial M))\ar[d]\\
\Emb(H,M)\ar[r] & \Map^\partial_{\on{GL}(d-1)}(\on{Fr}(TH),\on{Fr}(TM))}
\]

This diagram is weakly equivalent to
\[\xymatrix
{ & \partial M \ar[d]\\
\on{Fr}(T(\partial M))\ar[r] & \on{Fr}(T(\partial M))}
\]
where the bottom map is the identity. Therefore, $\Emb_f(H,M)\simeq \partial M$.
\end{proof}

\begin{prop}
Let $M$ be a $d$-manifold with compact boundary and let $S$ be a compact $(d-1)$-manifold without boundary. The ``restriction to the boundary'' map
\[\Emb(S\times[0,1),M)\to \Emb(S,\partial M)\]
is a Hurewicz fibration and a weak equivalence.
\end{prop}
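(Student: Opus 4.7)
The plan is to verify the two properties separately, starting with the Hurewicz fibration claim. I would apply Cerf's lemma \ref{locally trivial fibration} to the action of the topological group $G = \on{Diff}(M, \partial M)$ of diffeomorphisms preserving $\partial M$, acting by post-composition on both the source and the target of the restriction map $r$. Since $r$ is $G$-equivariant, it suffices to produce a local section of the orbit map $G \to \Emb(S, \partial M)$, $\phi \mapsto \phi \circ g$, near each $g$. This is a consequence of the isotopy extension theorem applied to the compact submanifold $g(S) \subset \partial M$: neighboring embeddings in $\Emb(S, \partial M)$ correspond to small sections of the normal bundle of $g(S)$, which extend, via a collar of $\partial M$ in $M$ together with a bump function, to compactly supported vector fields on $M$ whose time-one flows yield the required local section. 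Since the embedding spaces are metrizable, hence paracompact, Cerf's lemma then produces the Hurewicz fibration structure.

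By the fibration property, the weak equivalence claim reduces to showing that each fiber $F_g = r^{-1}(g)$ is contractible. Fix a collar $c: \partial M \times [0, 1) \hookrightarrow M$ with open image $U$, and let $e(g)(x, t) = c(g(x), t) \in F_g$ denote the canonical collar extension. I plan to contract $F_g$ to $\{e(g)\}$ in two stages. First, the family $f_\lambda(x, t) = f(x, \lambda t)$, $\lambda \in (0, 1]$, lies in $F_g$ for any $f \in F_g$: it is an embedding as a composition of embeddings, restricts to $g$ on $S \times \{0\}$, and its $t$-derivative at the boundary is a positive multiple of $\partial_t f(x, 0)$, hence still inward-pointing. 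Using a bump function $u: M \to [0,1]$ supported in $U$ and equal to $1$ near $\partial M$, one constructs by an elementary compactness argument a continuous function $\lambda_0 : F_g \to (0, 1]$ for which $f_{\lambda_0(f)}$ has image in $U$, providing a deformation retraction of $F_g$ onto the subspace $V_g$ of embeddings with image in $U$. Second, via the collar $c$, $V_g$ identifies with the space of boundary-preserving embeddings $F: S \times [0, 1) \to \partial M \times [0, 1)$ with $F(\cdot, 0) = (g, 0)$; this is contracted onto the standard embedding $(x, t) \mapsto (g(x), t)$ by first straightening the $[0, 1)$-component via linear interpolation and then contracting the $\partial M$-component to $g$ within a tubular neighborhood of the compact submanifold $g(S) \subset \partial M$.

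The main technical obstacle lies in the last contraction of the $\partial M$-component: one must remain through embeddings throughout and maintain continuity in the weak $C^1$ topology on the non-compact domain $S \times [0, 1)$. A cleaner alternative would be to identify $F_g$, via the exponential map of a Riemannian metric on $M$ together with a shrinking argument in the spirit of lemma \ref{filtered}, with the space of $1$-jets along $S \times \{0\}$ of extensions of $g$; the latter is the space of sections of the subbundle of $TM|_{g(S)}$ consisting of inward-pointing vectors, which is tautologically contractible as a space of sections of a bundle with convex fibers.
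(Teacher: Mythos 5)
Your fibration argument takes a genuinely different route from the paper's, and your contractibility argument has a real gap at the point you yourself flag.

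For the Hurewicz fibration, you act on both spaces by post-composition with $\on{Diff}(M,\partial M)$ and invoke isotopy extension to build local sections. The paper instead first reduces to the case where $S$ is connected and diffeomorphic to $\partial M$ (otherwise both spaces are empty), and then acts by \emph{pre-composition} with $\on{Diff}(S)$. In that setting the orbit map $\on{Diff}(S)\to\Emb(S,\partial M)$, $\sigma\mapsto f\circ\sigma$, is already a homeomorphism, so the section is just its inverse and no isotopy extension is needed. Your route is workable but heavier, and it carries an extra subtlety you do not address: elements of $\Emb(S\times[0,1),M)$ must preserve the boundary splitting $T(\partial M)\oplus\mathbb{R}\cong TM|_{\partial M}$, so the group $G$ must consist of splitting-preserving diffeomorphisms, and the isotopy extension step must produce such. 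The paper's choice of $\on{Diff}(S)$, where $S$ has no boundary structure, sidesteps this entirely.

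For contractibility, your Stage 1 (rescaling $f_\lambda(x,t)=f(x,\lambda t)$ to push the image into a collar) matches the spirit of the paper's filtration $U_n=\{f: f(S\times[0,1/n])\subset C\}$ together with Lemma \ref{filtered}; both are fine. But Stage 2 does not work as written: linear interpolation of the $[0,1)$-component need not yield an embedding at intermediate times once the $\partial M$-component depends on $t$, and after the reduction one has $g(S)=\partial M$, so there is no nontrivial tubular neighborhood in which to ``contract the $\partial M$-component.'' The step you are missing is the paper's conjugation homotopy: after factoring through the collar to reduce to $M=S\times\mathbb{R}_{\geq 0}$ with $\alpha=\id$, one applies $(t,f)\mapsto h_{1/t}\circ f\circ h_t$ with $h_t(s,u)=(s,tu)$; as $t\to 0$ this retracts onto the subspace of embeddings of the form $(s,u)\mapsto(s,a(s)u)$ with $a:S\to\mathbb{R}_{>0}$, which is manifestly contractible. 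This is exactly the rigorous form of your vaguely stated ``identify $F_g$ with the space of $1$-jets'' alternative (note that, because of the splitting-preserving condition, the $1$-jet over $g$ is a single positive scalar at each point of $S$, not a whole inward-pointing cone); without the conjugation trick, the shrinking argument you gesture at is not an argument.
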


\begin{proof}
Note that an embedding between compact connected manifold without boundary is necessarily a diffeomorphism. Therefore the two spaces in the proposition are empty unless $S$ is diffeomorphic to a disjoint union of connected components of $\partial M$. 

Let us assume that $S$ and $\partial M$ are connected and diffeomorphic. The general case follows easily from this particular case. 

We first prove that this map is a Hurewicz fibration. We use the criterion \ref{locally trivial fibration}. The map
\[\Emb(S\times[0,1),M)\to \Emb(S,\partial M)\]
is obviously equivariant with respect to the obvious right action of $\on{Diff}(S)$ on both sides. Therefore, for any $f\in\Emb(S,\partial M)$, we need to define a section of the ``action on $f$'' map
\[\on{Diff}(S)\to\Emb(S,\partial M)\]
but this map is by hypothesis a diffeomorphism.

Now let us prove that each fiber is contractible. Let $\alpha$ be a diffeomorphism $S\to \partial M$. We need to prove that the space $\Emb^\alpha(S\times[0,1),M)$ consisting of embeddings whose restriction to the boundary is $\alpha$ is contractible.

Let us choose one of these embeddings $\phi:S\times[0,1)\to M$ and let's denote its image by $C$. For $n>0$, let $U_n$ be the subset of $\Emb^\alpha(S\times[0,1),M)$ consisting of embeddings $f$ with the property that $f(S\times[0,\frac{1}{n}])\subset C$. By definition of the weak $C^1$-topology, $U_n$ is open in $\Emb^\alpha(S\times[0,1),M)$, moreover $\Emb^\alpha(S\times[0,1),M)=\bigcup_n U_n$, therefore by \ref{filtered}, it is enough to prove that $U_n$ is contractible for all $n$.

Let us consider the following homotopy:
\[
H:\left[0,1-\frac{1}{n}\right]\times U_n\to U_n
\]
\[
(t,f)\mapsto ((s,u)\mapsto f(s,(1-t)u))
\]

It is a homotopy between the identity of $U_n$ and the inclusion $U_1\subset U_n$. Therefore $U_1$ is a deformation retract of each of the $U_n$ and all we have to prove is that $U_1$ is contractible. But each element of $U_1$ factors through $C=\on{Im}\phi$, hence all we need to do is prove the lemma when $M=S\times[0,1)$ and $\alpha=\id$. It is equivalent and notationally simpler to do it for $S\times\mathbb{R}_{\geq 0}$\footnote{The following was suggested to us by S{\o}ren Galatius}.

For $t\in(0,1]$, let $h_t:S\times\mathbb{R}_{\geq 0}\to S\times\mathbb{R}_{\geq 0}$ be the diffeomorphism sending $(s,u)$ to $(s,tu)$

Let us consider the following homotopy
\[
(0,1]\times\Emb^\id(S\times\mathbb{R}_{\geq 0},S\times\mathbb{R}_{\geq 0})\to\Emb^\id(S\times\mathbb{R}_{\geq 0},S\times\mathbb{R}_{\geq 0})
\]
\[
(t,f)\mapsto h_{1/t}\circ f\circ h_t\]

At time $1$, this is the identity of $\Emb^\id(S\times[0,+\infty),S\times[0,+\infty))$. At time $0$ it has as limit the map
\[(s,u)\mapsto \left(s,u\frac{\partial f}{\partial u}(s,0)\right)\]
that lies in the subspace of $\Emb^\id(S\times[0,+\infty),S\times[0,+\infty))$ consisting of element which are of the form $(s,u)\mapsto(s,a(s)u)$ for some smooth function $a:S\to\mathbb{R}_{>0}$. This space is obviously contractible and we have shown that it is deformation retract of $\Emb^\id(S\times[0,+\infty),S\times[0,+\infty))$.
\end{proof}

\begin{prop}\label{restriction to the boundary}
Let $M$ be a framed $d$-manifold with compact boundary. The ``restriction to the boundary'' map
\[\Emb_f(S\times[0,1),M)\to \Emb_f(S,\partial M)\]
is a weak equivalence. 
\end{prop}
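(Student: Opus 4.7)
The plan is to compare the two homotopy pullback squares that define the framed embedding spaces (Definitions \ref{framed embedding} and \ref{framed embedding with boundary}). By Proposition \ref{hopbW vs hopb}, I would first replace both homotopy pullbacks over a mapping space by ordinary homotopy pullbacks in $\cat{Top}$, since up to weak equivalence this gives the same result. Restriction to the boundary then provides a morphism of cospans, and it suffices to verify that each of the three structural arrows between the two cospans is a weak equivalence; invariance of homotopy pullbacks under weak equivalence will then conclude the proof.

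The first arrow is $\Emb(S\times[0,1),M)\to\Emb(S,\partial M)$, which is a weak equivalence by the preceding (unframed) proposition. The second is the restriction map $\Map((S\times[0,1),S\times\{0\}),(M,\partial M))\to\Map(S,\partial M)$; I would exhibit the boundary inclusion $S\times\{0\}\hookrightarrow S\times[0,1)$ as a deformation retract via the straight-line homotopy $f_t(s,u):=f(s,tu)$, which is compatible with the constraint that $S\times\{0\}$ be sent to $\partial M$, and this yields a deformation retraction on the mapping space. The third is $\Map^\partial_{\on{GL}(d)}(\on{Fr}(T(S\times[0,1))),\on{Fr}(TM))\to\Map_{\on{GL}(d-1)}(\on{Fr}(TS),\on{Fr}(T\partial M))$, to be handled by a similar collaring argument applied to frame bundles.

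The main subtlety is this third arrow, where I must track the boundary-compatibility condition on equivariant maps between frame bundles. The key input is the splitting $TN_{|\partial N}\cong T(\partial N)\oplus\mathbb{R}$ built into our definition of a manifold with boundary, together with the stipulation that a framing is compatible with the boundary when the normal factor is sent to a positive multiple of the inward-pointing vector. With this in hand, a boundary-compatible $\on{GL}(d)$-equivariant map $\on{Fr}(T(S\times[0,1)))\to\on{Fr}(TM)$ restricts on $\partial=S\times\{0\}$ to a $\on{GL}(d-1)$-equivariant map $\on{Fr}(TS)\to\on{Fr}(T\partial M)$ together with a positive scalar on the normal coordinate; the collar direction and the positive-scalar fiber each contribute only contractible choices, and the straight-line retraction $(s,u)\mapsto(s,tu)$ lifts to frame bundles to furnish the required deformation retraction. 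Combining these three equivalences then gives the result.
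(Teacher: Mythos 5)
Your proof is correct and follows exactly the same strategy as the paper's: compare the two homotopy pullback squares defining the framed embedding spaces via the restriction map, check that all three structural arrows of the cospans are weak equivalences (the unframed one by the previous proposition), and conclude by invariance of homotopy pullbacks. The paper's proof is just a terser version of yours, leaving the deformation-retraction arguments for the mapping-space and frame-bundle legs unstated.
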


\begin{proof}
There is a restriction map comparing the pullback diagram defining $\Emb_f(S\times[0,1),M)$ to the pullback diagram defining $\Emb_f(S,\partial M)$. Each of the three maps is a weak equivalence (one of them because of the previous proposition) therefore, the homotopy pullbacks are equivalent.
\end{proof}

We are now ready to define the operads $\oper{E}_d$, $\oper{E}_d^{\partial}$.

\begin{defi}
The operad $\oper{E}_d$ of \emph{little $d$-disks} is the simplicial operad whose $n$-th space is $\Emb_f(D^{\sqcup n},D)$.
\end{defi}

Note that there is an inclusion of operads
\[\oper{D}_d\to\oper{E}_d\]

\begin{prop}\label{framed vs linear}
This map is a weak equivalence of operads.
\end{prop}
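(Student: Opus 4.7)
The plan is to check the weak equivalence color-by-color (both operads are single-colored, so the color-bijection condition is trivial) by comparing both $\oper{D}_d(n)$ and $\oper{E}_d(n)$ against the common target $\on{Conf}(n,D)$.

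First, I need to describe the map of operads $\oper{D}_d\to\oper{E}_d$. A rectilinear embedding $\phi:D^{\sqcup n}\to D$ is a smooth embedding whose derivative at every point is a positive real multiple of the identity matrix. Since the space of positive scalar matrices is a contractible sub-semigroup of $\on{GL}(d)$ with a canonical straight-line path to the identity, $\phi$ acquires a canonical path of trivializations between the standard framing of $D^{\sqcup n}$ and the pullback framing, and this datum upgrades $\phi$ to a framed embedding in the sense of Definition \ref{framed embedding}. The construction is continuous in $\phi$, equivariant under the $\Sigma_n$-relabeling, and compatible with composition, hence defines a map of simplicial operads.

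Second, I set up the commuting triangle
\[
\xymatrix{
\oper{D}_d(n)\ar[rr]\ar[dr] & & \oper{E}_d(n)\ar[dl]\\
& \on{Conf}(n,D) &
}
\]
where both diagonal maps are evaluation at the centers of the source disks. That this triangle commutes is immediate: the framed-embedding datum built in Step~1 does not alter the underlying embedding, so the centers are sent to the same configuration on the nose.

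Finally, I invoke two already-established comparison results. The left diagonal is a weak equivalence by the proposition at the end of Section~3, which exhibits it as a Hurewicz fibration with contractible fibers. The right diagonal is a weak equivalence by Proposition~\ref{framed embeddings of disks} applied with $p=n$ and $q=0$ to $M=D$. By two-out-of-three, the horizontal map $\oper{D}_d(n)\to\oper{E}_d(n)$ is a weak equivalence for every $n$, which is exactly what is required.

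The only subtlety is the construction of the operad map in Step~1: one must check that the canonical contracting path used to produce the framing datum is genuinely compatible with operadic composition, i.e. that the path attached to $\phi\circ(\psi_1,\ldots,\psi_k)$ agrees with the concatenated path coming from the components. Because the derivatives of rectilinear maps are positive scalar matrices and path concatenation inside the contractible semigroup of positive scalars is canonically homotopic to the straight-line path, this compatibility is routine; everything else is formal.
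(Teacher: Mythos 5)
Your argument is correct and follows the same route as the paper: check the claim arity-by-arity by mapping both $\oper{D}_d(n)$ and $\oper{E}_d(n)$ to $\on{Conf}(n,D)$ via evaluation at centers, then apply two-out-of-three using the equivalence for linear disks from Section~3 and Proposition~\ref{framed embeddings of disks}. The paper merely asserts the existence of the operad map $\oper{D}_d\to\oper{E}_d$ as an inclusion, whereas you spell out the canonical contraction of positive scalar matrices that produces the framing datum; that is a welcome clarification, not a departure.
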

\begin{proof}
It is enough to check it degreewise. The map
\[\oper{D}_d\to\on{Conf}(n,D)\]
is a weak equivalence which factors through $\oper{E}_d(n)$ by \ref{framed embeddings of disks}, the map $\oper{E}_d(n)\to\on{Conf}(n,D)$ is a weak equivalence.
\end{proof}

\begin{defi}
The  operad $\oper{E}_d^{\partial}$ is a colored operad with two colors $z$ and $h$ and with
\begin{align*}
\oper{E}_d^\partial(z^{\boxplus n};z)&=\oper{E}_d(n)\\
\oper{E}_d^\partial(z^{\boxplus n}\boxplus h^{\boxplus m};h)&=\Emb_f(D^{\sqcup n}\sqcup H^{\sqcup m},H)
\end{align*}
\end{defi}

\begin{prop}
The obvious inclusion of operads
\[\oper{D}_d^\partial\to\oper{E}_d^\partial\]
is a weak equivalence of operads.
\end{prop}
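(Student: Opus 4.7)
The plan is to check the weak equivalence colorwise on operation spaces, and in each case reduce to a two-out-of-three argument using evaluation at the centers of the disks (and half-disks) as a common target.

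Since $\oper{E}_d^\partial$ and $\oper{D}_d^\partial$ have the same set of colors $\{z,h\}$, the inclusion is tautologically a bijection on colors. For operations with target $z$, we have $\oper{E}_d^\partial(z^{\boxplus n};z)=\oper{E}_d(n)$ and $\oper{D}_d^\partial(z^{\boxplus n};z)=\oper{D}_d(n)$, and the inclusion is the one already shown to be a weak equivalence in Proposition \ref{framed vs linear}. So the only remaining case is the operation space with target $h$, namely
\[
\oper{D}_d^\partial(z^{\boxplus n}\boxplus h^{\boxplus m};h)=\Emb_{lin}^\partial(D^{\sqcup n}\sqcup H^{\sqcup m},H)
\longrightarrow
\Emb_f(D^{\sqcup n}\sqcup H^{\sqcup m},H)=\oper{E}_d^\partial(z^{\boxplus n}\boxplus h^{\boxplus m};h).
\]

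For this, I would consider the commutative triangle in which both sides map to the configuration space $\on{Conf}(n,H-\partial H)\times\on{Conf}(m,\partial H)$ via evaluation at the centers of the disks and half-disks. The left diagonal is a weak equivalence by the second proposition of Section 3 (the analogue for $\oper{D}_d^\partial$ of the fact that $\oper{D}_d(n)\to\on{Conf}(n,D)$ is a weak Hurewicz fibration). The right diagonal is a weak equivalence by Proposition \ref{framed embeddings of disks}, taking $M=H$ with its canonical framing so that $\partial M=\partial H$ is naturally identified with the open $(d-1)$-disk. The triangle commutes since the evaluation at the centers is the common underlying geometric data. By two-out-of-three, the horizontal map is a weak equivalence.

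The main thing to verify is honestly just that everything matches up: that the framing on $H$ used in the definition of $\Emb_f$ restricts, on each disk or half-disk in the source, to the standard framing so that rectilinear embeddings lift canonically to framed embeddings (i.e., that the comparison map is well-defined), and that after this lift the evaluation at the origins of the components agrees with the evaluation map used on $\oper{D}_d^\partial$. Both are immediate from the definitions since a rectilinear embedding is framed by construction (translations and positive dilations act trivially on the framing up to a canonical path). Once this compatibility is in hand there is no further content, and the argument is a literal copy of the proof of Proposition \ref{framed vs linear}, carried out in the two-colored setting.
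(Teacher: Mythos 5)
Your proof is correct and takes essentially the same approach as the paper. The paper's proof is simply ``Similar to \ref{framed vs linear},'' i.e., check degreewise that both sides map by a weak equivalence to $\on{Conf}(n,H-\partial H)\times\on{Conf}(m,\partial H)$ and conclude by two-out-of-three; you have spelled out exactly this argument, including the routine verification that the comparison triangle commutes.
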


\begin{proof}
Similar to \ref{framed vs linear}.
\end{proof}

\section{Factorization homology}

In this section, we define factorization homology of $\oper{E}_d$-algebras and $\oper{E}_d^\partial$-algebras. The paper \cite{ayalastructured} defines factorization homology of manifolds with various kind of singularities. The only originality of this section is the language of model categories as opposed to $\infty$-categories.

Let $\mathfrak{M}$ be the set of framed $d$ manifolds whose underlying manifold is a submanifold of $\mathbb{R}^{\infty}$. Note that $\mathfrak{M}$ contains at least one element of each diffeomorphism class of framed $d$-manifold.

\begin{defi}
We denote by $f\oMfld_d$ an operad whose set of colors is $\mathfrak{M}$ and with mapping objects:
\[f\oMfld_d(\{M_1,\ldots,M_n\},M)=\Emb_f(M_1\sqcup\ldots\sqcup M_n,M)\]

As usual, we denote by $f\Mfld_d$ the free symmetric monoidal category on the operad $f\oMfld_d$.

We can see $D\subset\mathbb{R}^d\subset\mathbb{R}^{\infty}$ as an element of $\mathfrak{M}$. The operad $\oper{E}_d$ is the full suboperad of $f\oMfld_d$ on the color $D$.  The category $\cat{E}_d$ is the full subcategory of $f\Mfld_d$ on objects of the form $D^{\sqcup n}$ with $n$ a nonnegative integer.
\end{defi}

Similarly, we define $\mathfrak{M}^\partial$ to be the set of submanifold of $\mathbb{R}^{\infty}$ possibly with boundary. $\mathfrak{M}^\partial$ contains at least one element of each diffeomorphism class of framed $d$-manifold with boundary.

\begin{defi}
We denote by $f\oMfld^{\partial}_d$ the operad whose set of colors is $\mathfrak{M}^\partial$ and with mapping objects:
\[f\oMfld^{\partial}_d(\{M_1,\ldots,M_n\},M)=\Emb_f^\partial(M_1\sqcup\ldots\sqcup M_n,M)\]

We denote by $f\Mfld_d^\partial$ the free symmetric monoidal category on the operad $f\oMfld^{\partial}_d$. 

The suboperad $\oper{E}_d^\partial$ is the full suboperad of $f\oMfld_d^\partial$ on the colors $D$ and $H$.
\end{defi}

From now on, we assume that $\cat{C}$ is a cofibrantly generated symmetric monoidal simplicial model category with a good theory of algebras over $\Sigma$-cofibrant operads.

\begin{defi}\label{factorization}
Let $A$ be an object of $\cat{C}[\oper{E}_d]$. We define \emph{factorization homology with coefficients in $A$} to be the derived operadic left Kan extension of $A$ along the map of operads $\oper{E}_d\to f\oMfld_d$. 

We denote by $\int_MA$ the value at the manifold $M$ of factorization homology. By definition, $M\mapsto \int_M A$ is a symmetric monoidal functor.
\end{defi}

We have $\int_MA=\Emb_f(-,M)\otimes_{\cat{E}_d}QA$ where $QA\to A$ is a cofibrant replacement in the category $\cat{C}[\oper{E}_d]$. We use the fact that the operad $\oper{E}_d$ is $\Sigma$-cofibrant and that the right module $\Emb_f(-,M)$ is $\Sigma$-cofibrant.

We can define factorization homology of an object of $ f\Mfld_d^\partial$ with coefficients in an algebra over $\oper{E}_d^\partial$.

\begin{defi}
Let $(B,A)$ be an algebra over $\oper{E}_d^\partial$ in $\cat{C}$. \emph{Factorization homology with coefficients in $(B,A)$} is the derived operadic left Kan extension of $(B,A)$ along the obvious inclusion of operads $\oper{E}_d^\partial\to f\oMfld_d^\partial$. We write $\int_M(B,A)$ to denote the value at $M\in f\Mfld_d^\partial$ of the induced functor.
\end{defi}

Again, we have $\int_M(B,A)=\Emb_f^\partial(-,M)\otimes_{\cat{E}_d^\partial}Q(B,A)$ where $Q(B,A)\to(B,A)$ is a cofibrant replacement in the category $\cat{C}[\oper{E}_d^\partial]$. We use the fact that $\oper{E}_d^\partial$ is $\Sigma$-cofibrant and that $\Emb_f^{\partial}(-,M)$ is $\Sigma$-cofibrant as a right module over $\oper{E}_d^\partial$.

\subsection*{Factorization homology as a homotopy colimit}

In this section, we show that factorization homology can be expressed as the homotopy colimit of a certain functor on the poset of open sets of $M$ that are diffeomorphic to a disjoint union of disks. Note that this result in the case of manifolds without boundary is proved in \cite{luriehigher}. We assume that $\cat{C}$ is a symmetric monoidal simplicial cofibrantly generated model category with a good theory of algebras over $\Sigma$-cofibrant operads and satisfying proposition \ref{Operadic vs categorical}. As we have shown, proposition \ref{Operadic vs categorical} is satisfied if $\cat{C}$ has a cofibrant unit or if $\cat{C}$ is $L_Z\Mod_E$.

We will rely heavily on the following theorem:

\begin{theo}\label{lurie}
Let $X$ be a topological space and $\cat{U}(X)$ be the poset of open subsets of $X$. Let $\chi:\cat{A}\to\cat{U}(X)$ be a functor from a small discrete category $\cat{A}$. For a point $x\in X$, denote by $\cat{A}_x$ the full subcategory of $A$ whose objects are those that are mapped by $\chi$ to open sets containing $x$. Assume that for all $x$, the nerve of $\cat{A}_x$ is contractible. Then the obvious map:
\[\on{hocolim}\chi\to X\]
is a weak equivalence. 
\end{theo}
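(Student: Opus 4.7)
The plan is to model the homotopy colimit concretely as the realization of a bar-like simplicial space and analyze the canonical augmentation to $X$ fiberwise. Explicitly, I would write $\on{hocolim}\chi = |B_\bullet|$ where $B_n = \bigsqcup_{a_0 \to \cdots \to a_n} \chi(a_0)$, the coproduct ranging over $n$-tuples of composable arrows in $\cat{A}$, with augmentation $p : |B_\bullet| \to X$ given summandwise by the open inclusion $\chi(a_0) \hookrightarrow X$. The goal is to show that $p$ is a weak equivalence.

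The first step is to compute the strict fibers of $p$. A point of $B_n$ sits over $x \in X$ iff it has the form $(a_0 \to \cdots \to a_n,\, x)$ with $x \in \chi(a_0)$; and since $\chi$ sends morphisms of $\cat{A}$ to inclusions of open sets, $x \in \chi(a_0)$ automatically forces $x \in \chi(a_i)$ for each $i$, i.e.\ the entire chain lies in $\cat{A}_x$. Consequently $p^{-1}(x)$ is the nerve of $\cat{A}_x$, which is contractible by hypothesis. Surjectivity of $p$ on $\pi_0$ is then immediate from the fact that $\cat{A}_x \neq \varnothing$ for every $x$, so that the $\chi(a)$ do form an open cover of $X$.

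The main obstacle is upgrading this pointwise fiber contractibility to a global weak equivalence, i.e.\ showing that the homotopy fibers of $p$ coincide with its strict fibers. The approach I would take is to verify that $p$ is a quasifibration and then apply Dold's criterion. This rests on the key observation that every face map of $B_\bullet$ is a disjoint union of open inclusions of subspaces of $X$, so that for any open $V \subseteq X$ one has $p^{-1}(V) \cong |B_\bullet \times_X V|$ (realization commutes with such pullbacks because the simplicial space is proper in the appropriate sense). One can then cover $X$ by open neighborhoods $V$ of a given $x$, exhausting $X$ in a suitable way, so that the full subcategory $\cat{A}_V = \{a : V \subseteq \chi(a)\} \subseteq \cat{A}_x$ is cofinal in $\cat{A}_x$, and a deformation retraction of $V$ onto $x$ lifts summandwise to a deformation retraction of $p^{-1}(V)$ onto $p^{-1}(x)$. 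Once quasifibration is in place, the long exact sequence of homotopy groups together with the contractibility of strict fibers gives the desired weak equivalence.

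An alternative, perhaps cleaner route is to reinterpret the hypothesis: the condition that $N(\cat{A}_x)$ is contractible for every $x$ says precisely that $B_\bullet \to X$ is a hypercover of $X$ in the sense of Dugger-Isaksen, and one may invoke their theorem that any hypercover realizes to a weak equivalence. Either path reduces the proof to the fiber computation, which is the easy part; all the difficulty lies in the homotopical analysis of the augmentation $p$.
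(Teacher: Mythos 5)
The paper does not actually prove this statement: its ``proof'' is the single line ``See \cite{luriehigher} Theorem A.3.1.\ p.\ 971.'' There is therefore no argument in the paper to compare yours against; I will assess your proposal on its own terms.

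Your fiber computation is correct. Because every morphism of $\cat{U}(X)$ is an inclusion, a chain $a_0\to\cdots\to a_n$ with $x\in\chi(a_0)$ lies entirely in $\cat{A}_x$, so $p^{-1}(x)$ is indeed (weakly equivalent to) $|N\cat{A}_x|$, and contractibility follows from the hypothesis. As you say yourself, all of the difficulty is in globalizing this, and both of your proposed routes have genuine gaps there.

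The quasifibration route breaks down because the theorem is stated for an \emph{arbitrary} topological space $X$, and an arbitrary space need not have any neighborhood of $x$ that deformation retracts onto $x$ (it need not even be semi-locally contractible). The step ``a deformation retraction of $V$ onto $x$ lifts summandwise to a deformation retraction of $p^{-1}(V)$ onto $p^{-1}(x)$'' thus simply does not exist in general. Even if one imposed local contractibility on $X$, the claim that $\cat{A}_V=\{a : V\subseteq\chi(a)\}$ is cofinal in $\cat{A}_x$ for all sufficiently small $V$ is not a consequence of the stated hypotheses and would need its own argument; and the summands of $(B_\bullet\times_X V)_n$ are of the form $\chi(a_0)\cap V$, not all equal to $V$, so the proposed ``summandwise'' lift is not well defined without further analysis.

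The Dugger--Isaksen route is also not correct as stated: $B_\bullet\to X$ is in general \emph{not} a hypercover in their sense. Contractibility of each $N\cat{A}_x$ does not imply that the matching maps $B_n\to(\on{cosk}_{n-1}B)_n$ are open covers. Already at $n=1$, one has $(\on{cosk}_0 B)_1\cong\bigsqcup_{(a,b)}\chi(a)\cap\chi(b)$, while the image of $B_1=\bigsqcup_{a\to b}\chi(a)$ only meets those summands $(a,b)$ for which $\cat{A}$ has a morphism $a\to b$. It is easy to arrange a diagram in which every $\cat{A}_x$ has a terminal object (hence contractible nerve) while $\cat{A}$ contains non-comparable $a,b$ with $\chi(a)\cap\chi(b)\neq\varnothing$; then the $n=1$ cover condition fails. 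What Dugger--Isaksen's realization theorem does treat is the \v Cech nerve of an open cover, but $B_\bullet$ is a bar construction, not a \v Cech nerve.

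For reference, Lurie's own proof of A.3.1 uses neither of these strategies: it works directly with singular simplices, using compactness of $\Delta^n$ to subdivide any singular simplex of $X$ until each piece lands in some $\chi(a)$, and then uses contractibility of the $\cat{A}_x$ to assemble the pieces coherently. That argument requires no local niceness of $X$. If you want a self-contained proof rather than a citation, that is the approach to emulate; as it stands your proposal proves at best a weaker statement (under local contractibility hypotheses on $X$), and even that version has the unjustified cofinality and patching steps noted above.
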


\begin{proof}
See \cite{luriehigher} Theorem A.3.1. p. 971.
\end{proof}

Let $M$ be an object of $ f\Mfld_d$. Let $\cat{D}(M)$ the poset of subset of $M$ that are diffeomorphic to a disjoint union of disks. Let us choose for each object $V$ of $\cat{D}(M)$ a framed diffeomorphism $V\cong D^{\sqcup n}$ for some uniquely determined $n$. Each inclusion $V\subset V'$ in $\cat{D}(M)$ induces a morphism $D^{\sqcup n}\to D^{\sqcup n'}$ in $\cat{E}_d$ by composing with the chosen parametrization. Therefore each choice of parametrization induces a functor $\cat{D}(M)\to\cat{E}_d$. Up to homotopy this choice is unique since the space of automorphisms of $D$ in $ \cat{E}_d$ is contractible. 

In the following we assume that we have one of these functors $\delta:\cat{D}(M)\to\cat{E}_d$. We fix a cofibrant algebra $A: \cat{E}_d\to\cat{C}$.

\begin{lemm}\label{hocolim of functors}
The obvious map:
\[\on{hocolim}_{V\in \cat{D}(M)}\Emb_f(-,V)\to\Emb_f(-,M)\]
is a weak equivalence in $\on{Fun}( \cat{E}_d,\S)$.
\end{lemm}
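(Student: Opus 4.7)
The strategy is to verify the equivalence objectwise in the projective model structure on $\on{Fun}(\cat{E}_d,\S)$: since homotopy colimits preserve levelwise weak equivalences, it suffices to prove that for each $n\geq 0$ the induced map
\[\on{hocolim}_{V\in\cat{D}(M)}\Emb_f(D^{\sqcup n},V)\longrightarrow\Emb_f(D^{\sqcup n},M)\]
is a weak equivalence of simplicial sets. By Proposition \ref{framed embeddings of disks}, evaluation at the centers of the disks gives a weak equivalence $\Emb_f(D^{\sqcup n},-)\simeq\on{Conf}(n,-)$, and this comparison is natural in $V$ along open inclusions $V\subseteq V'$. Thus the problem reduces to showing that
\[\on{hocolim}_{V\in\cat{D}(M)}\on{Conf}(n,V)\longrightarrow\on{Conf}(n,M)\]
is a weak equivalence of spaces.

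For this I plan a direct application of Theorem \ref{lurie} with $X=\on{Conf}(n,M)$ and the functor $\chi\colon\cat{D}(M)\to\cat{U}(\on{Conf}(n,M))$ sending $V$ to the open subset $\on{Conf}(n,V)\subset\on{Conf}(n,M)$. To verify the hypothesis, fix a configuration $x=(x_1,\ldots,x_n)\in\on{Conf}(n,M)$ and consider the full subposet $\cat{D}(M)_x\subset\cat{D}(M)$ consisting of those $V$ that contain $\{x_1,\ldots,x_n\}$. I would check that $\cat{D}(M)_x$ is cofiltered: given finitely many $V_1,\ldots,V_k\in\cat{D}(M)_x$, the intersection $\bigcap_j V_j$ is an open neighborhood of $\{x_1,\ldots,x_n\}$ in $M$, so one can choose pairwise disjoint open disks $U_i\ni x_i$ contained in $\bigcap_j V_j$; then $\bigsqcup_i U_i$ lies in $\cat{D}(M)_x$ and is a common lower bound for $V_1,\ldots,V_k$. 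Since cofiltered posets have contractible nerves, Theorem \ref{lurie} yields the desired weak equivalence.

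The main subtlety I expect concerns the naturality in $V$ of the comparison with configuration spaces together with the fact that the functor $\delta\colon\cat{D}(M)\to\cat{E}_d$ depends on a choice of framed parametrization $V\cong D^{\sqcup n_V}$ for each $V$. Since the space of framed self-embeddings of $D^{\sqcup k}$ is contractible (Proposition \ref{framed embeddings of disks}), any two such choices of $\delta$ are related by a natural zigzag of weak equivalences of functors $\cat{D}(M)\to\cat{E}_d$, so they produce weakly equivalent hocolims; the argument therefore goes through independently of the parametrizations and the statement of the lemma is unambiguous up to weak equivalence.
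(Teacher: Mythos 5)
Your proof is correct and follows the same basic strategy as the paper: reduce to a levelwise statement and apply Theorem \ref{lurie}, then observe that the relevant poset of open sets is (co)filtered and hence has contractible nerve. The one genuine difference is that you interpose a reduction to configuration spaces via Proposition \ref{framed embeddings of disks} before invoking Theorem \ref{lurie}, whereas the paper applies Theorem \ref{lurie} directly to the functor $V\mapsto \Emb_f(D^{\sqcup n},V)$ viewed as an open subset of $\Emb_f(D^{\sqcup n},M)$, and checks that the poset of $V$ containing $\on{im}(\phi)$ is contractible (the paper says ``filtered''; your ``cofiltered'' is the direction that is actually visible, but both imply the nerve is contractible). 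Your detour buys you one thing: the openness of $\on{Conf}(n,V)\subset\on{Conf}(n,M)$ is immediate, whereas the openness of $\Emb_f(D^{\sqcup n},V)\subset\Emb_f(D^{\sqcup n},M)$ in the weak $C^1$-topology, which Theorem \ref{lurie} requires and which the paper takes for granted, is somewhat less transparent since $D^{\sqcup n}$ is noncompact. On the other hand it costs you the naturality check for the evaluation-at-centers comparison, which you correctly flag. Your closing paragraph about the choice of parametrizations $\delta$ is a bit beside the point for this lemma as stated: $\Emb_f(-,V)$ is well-defined as a right $\cat{E}_d$-module without choosing $\delta$, because $V\subset M$ inherits a framing; $\delta$ only enters in Corollary \ref{left} and its successors.
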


\begin{proof}
It suffices to prove that for each $n$, there is a weak equivalence in spaces:
\[\on{hocolim}_{V\in \cat{D}(M)}\Emb_f(D^{\sqcup n},V)\simeq\Emb_f(D^{\sqcup n},M)\]

We can apply theorem \ref{lurie} to the functor:
\[\cat{D}(M)\to\cat{U}(\Emb_f(D^{\sqcup n},M))\]
sending $V$ to $\Emb_f(D^{\sqcup n},V)\subset\Emb_f(D^{\sqcup n},M)$. For a given point $\phi$ in $\Emb_f(D^{\sqcup n},M)$, we have to show that the poset of open sets $V\in \cat{D}(M)$ such that $\on{im}(\phi)\subset V$ is contractible. But this poset is filtered, thus its nerve is contractible. 
\end{proof}

\begin{coro}\label{left}
We have:
\[\int_M A\simeq\on{hocolim}_{V\in\cat{D}(M)}\int_{\delta(V)} A\]
\end{coro}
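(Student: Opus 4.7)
The plan is to combine the three previous results: Proposition \ref{Operadic vs categorical} (which lets us model factorization homology as a coend with a cofibrant replacement), Proposition \ref{invariance of operadic coend} (which gives the Quillen/homotopy invariance properties of the coend $-\otimes_{\cat{E}_d} QA$), and Lemma \ref{hocolim of functors} (which identifies the right module $\Emb_f(-,M)$ with a homotopy colimit indexed by $\cat{D}(M)$).

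First, I would fix a cofibrant replacement $QA\to A$ in $\cat{C}[\oper{E}_d]$, so that by definition and by \ref{Operadic vs categorical} we have $\int_M A \simeq \Emb_f(-,M)\otimes_{\cat{E}_d}QA$, and likewise $\int_{\delta(V)}A \simeq \Emb_f(-,\delta(V))\otimes_{\cat{E}_d}QA$ for every $V\in\cat{D}(M)$ (using the chosen parametrization to identify $\delta(V)$ with some $D^{\sqcup n}\in\cat{E}_d$, noting that any other choice would give an equivalent answer since the space of automorphisms of $D$ in $\cat{E}_d$ is contractible).

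Next I would apply Lemma \ref{hocolim of functors}: the canonical map
\[\on{hocolim}_{V\in\cat{D}(M)}\Emb_f(-,V)\;\longrightarrow\;\Emb_f(-,M)\]
is a weak equivalence of right $\oper{E}_d$-modules. By part (3) of Proposition \ref{invariance of operadic coend}, the functor $-\otimes_{\cat{E}_d}QA$ preserves all weak equivalences between right modules, so applying it yields
\[\bigl(\on{hocolim}_{V\in\cat{D}(M)}\Emb_f(-,V)\bigr)\otimes_{\cat{E}_d}QA \;\simeq\; \Emb_f(-,M)\otimes_{\cat{E}_d}QA \;\simeq\; \int_M A.\]

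Finally I would commute the homotopy colimit past the coend. By part (2) of Proposition \ref{invariance of operadic coend}, the functor $-\otimes_{\cat{E}_d}QA$ from right modules over $\oper{E}_d$ to $\cat{C}$ (equipped with the absolute model structure) is left Quillen, hence commutes with homotopy colimits. Combined with the identification above, this gives
\[\int_M A \;\simeq\; \on{hocolim}_{V\in\cat{D}(M)} \bigl(\Emb_f(-,V)\otimes_{\cat{E}_d}QA\bigr) \;\simeq\; \on{hocolim}_{V\in\cat{D}(M)} \int_{\delta(V)} A,\]
which is the claim.

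The main obstacle I expect is purely bookkeeping: making sure the homotopy colimit on the right module side is genuinely modeled in a way that $-\otimes_{\cat{E}_d}QA$ can be seen to preserve (using its Quillen left adjoint nature rather than an \emph{ad hoc} construction), and that the identification of $\Emb_f(-,V)\otimes_{\cat{E}_d}QA$ with $\int_{\delta(V)}A$ is genuinely functorial in $V\in\cat{D}(M)$, which follows from the definition of $\delta$ as a functor $\cat{D}(M)\to\cat{E}_d$ and the fact that ambient inclusions of disk unions are sent to the corresponding embeddings.
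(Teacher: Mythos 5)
Your proof is correct and follows the same strategy as the paper's: both reduce to Lemma \ref{hocolim of functors}, which identifies $\Emb_f(-,M)$ as a homotopy colimit over $\cat{D}(M)$, and then push this equivalence through the derived coend using Propositions \ref{Operadic vs categorical} and \ref{invariance of operadic coend}. The only cosmetic difference is that you invoke the abstract fact that a left Quillen functor commutes with homotopy colimits, whereas the paper makes the identical point by writing the derived coend explicitly as a double bar construction $\on{B}(*,\cat{D}(M),\on{B}(\Emb_f(-,-),\cat{E}_d,A))$ and appealing to the Fubini property of bisimplicial realizations.
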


\begin{proof}
\sloppy 
By \ref{Operadic vs categorical}, we know that $\int_M A$ is weakly equivalent to the Bar construction $\on{B}(\Emb_f(-,M), \cat{E}_d,A)$. Therefore we have:
\[\int_MA\simeq \on{B}(*,\cat{D}(M),\on{B}(\Emb_f(-,-),\cat{E}_d,A))\]
The right hand side is the realization of a bisimplicial object and its value is independant of the order in which we do the realization.
\end{proof}

\begin{coro}\label{nb}
There is a weak equivalence:
\[\int_MA\simeq\on{hocolim}_{V\in\cat{D}(M)} A(\delta (V))\]
\end{coro}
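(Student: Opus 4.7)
The plan is to reduce to Corollary~\ref{left} by showing that whenever $N$ is an object in the image of the inclusion $\cat{E}_d\hookrightarrow f\Mfld_d$, one has a natural weak equivalence $\int_N A\simeq A(N)$. Since every $\delta(V)$ is of the form $D^{\sqcup n}$, this is precisely what is needed in order to replace $\int_{\delta(V)} A$ by $A(\delta(V))$ inside the homotopy colimit appearing in Corollary~\ref{left}.

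First I would unfold the definition: by Definition~\ref{factorization} (together with Proposition~\ref{Operadic vs categorical}) one has
\[\int_N A\;=\;\Emb_f(-,N)\otimes_{\cat{E}_d}QA,\]
where $QA\to A$ is a cofibrant replacement in $\cat{C}[\oper{E}_d]$. When $N\in\cat{E}_d$, the right $\cat{E}_d$-module $\Emb_f(-,N)$ is by definition the representable $\cat{E}_d(-,N)$, since $\cat{E}_d$ sits as a full (symmetric monoidal) subcategory of $f\Mfld_d$. The co-Yoneda lemma then gives a canonical isomorphism
\[\cat{E}_d(-,N)\otimes_{\cat{E}_d}QA\;\cong\;QA(N).\]
Because $A$ is assumed cofibrant as an $\oper{E}_d$-algebra, it is colorwise cofibrant, and $QA\to A$ is a colorwise weak equivalence, so $QA(N)\simeq A(N)$.

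Next, I would check that this identification is natural in $N\in\cat{E}_d$, so that precomposing with $\delta:\cat{D}(M)\to\cat{E}_d$ produces a natural weak equivalence between the two functors $V\mapsto\int_{\delta(V)}A$ and $V\mapsto A(\delta(V))$ from $\cat{D}(M)$ to $\cat{C}$. Naturality is automatic: co-Yoneda is a natural isomorphism, and the weak equivalence $QA\to A$ is a morphism in $\on{Fun}(\cat{E}_d,\cat{C})$. Taking homotopy colimits and combining with Corollary~\ref{left} then yields
\[\int_M A\;\simeq\;\on{hocolim}_{V\in\cat{D}(M)}\int_{\delta(V)}A\;\simeq\;\on{hocolim}_{V\in\cat{D}(M)}A(\delta(V)).\]

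The only non-formal point is ensuring that the coend $\Emb_f(-,N)\otimes_{\cat{E}_d}QA$ really computes the derived functor; this is guaranteed by $\Sigma$-cofibrancy of $\oper{E}_d$ and of $\Emb_f(-,N)$ (invoked already in the definition of factorization homology) together with Proposition~\ref{invariance of operadic coend}, so no additional cofibrant replacement of the module side is required. Everything else is a direct consequence of the Yoneda lemma and the previously established corollary.
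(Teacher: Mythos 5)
Your proof is correct and takes essentially the same route as the paper: reduce to Corollary~\ref{left} and then apply Yoneda's lemma (together with the running assumption that $A$ is a cofibrant $\oper{E}_d$-algebra, which makes the cofibrant replacement $QA$ redundant) to identify $\int_{\delta(V)}A$ with $A(\delta(V))$ naturally in $V$. The paper's proof is just a more terse version of your argument.
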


\begin{proof}
By \ref{left} the left-hand side is weakly equivalent to:
\[\on{hocolim}_{V\in\cat{D}(M)}\int_{\delta(V)}A\]
Let $U$ be an object of $\cat{E}_d$. The object $\int_U A$ is the coend :
\[\Emb_f(-,U)\otimes_{\cat{E}_d}A\]

Yoneda's lemma implies that this coend is isomorphic to $A(U)$. Moreover, this isomorphism is functorial in $U$. Therefore we have the desired identity.
\end{proof}

We want to use a similar approach for manifolds with boundaries. Let $M$ be an object of $f\Mfld_{d-1}$ and let $M\times[0,1)$ be the object of $ f\Mfld_d^\partial$ whose framing is the direct sum of the framing of $M$ and the obvious framing of $[0,1)$. We identify $\cat{D}(M)$ with the poset of open sets of $M\times[0,1)$ of the form $V\times[0,1)$ with $V$ an open set of $M$ that is diffeomorphic to a disjoint union of disks. As before we can pick a functor $\delta:\cat{D}(M)\to\cat{E}_d^\partial$.

\begin{lemm}
The obvious map: 
\[\on{hocolim}_{V\in\cat{D}(M)}\Emb_f(-,V\times[0,1))\to\Emb_f(-,M\times[0,1))\] 
is a weak equivalence in $\on{Fun}(( \cat{E}_d^\partial)\op,\S)$.
\end{lemm}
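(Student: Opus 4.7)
The plan is to mimic the proof of Lemma~\ref{hocolim of functors}, with one crucial modification. As in that lemma, it suffices to check the equivalence pointwise in $\on{Fun}((\cat{E}_d^\partial)\op,\S)$; that is, for each pair of colors $(z^{\boxplus n},h^{\boxplus m})$, I would show that
\[\on{hocolim}_{V\in\cat{D}(M)}\Emb_f(D^{\sqcup n}\sqcup H^{\sqcup m}, V\times[0,1))\to\Emb_f(D^{\sqcup n}\sqcup H^{\sqcup m}, M\times[0,1))\]
is a weak equivalence of simplicial sets.

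The naive attempt is to apply Theorem~\ref{lurie} directly to the functor $V\mapsto\Emb_f(D^{\sqcup n}\sqcup H^{\sqcup m}, V\times[0,1))$, which would translate into a contractibility statement for the subposet of $V\in\cat{D}(M)$ with $\pi_M(\on{im}(\phi))\subseteq V$ for each fixed $\phi$. This fails in general: a half-disk component $\phi_H^j\colon H\to M\times[0,1)$ can wrap non-trivially through $M\times[0,1)$, so that $\pi_M(\phi_H^j(H))$ is not contained in any disjoint union of disks in $M$, and the subposet can be empty.

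To bypass this obstruction I would first apply Proposition~\ref{framed embeddings of disks}, which gives, via evaluation at the origins of the disks and half-disks, natural weak equivalences
\[\Emb_f(D^{\sqcup n}\sqcup H^{\sqcup m}, N)\to\on{Conf}(n, N-\partial N)\times\on{Conf}(m,\partial N).\]
These are compatible with open inclusions $V\times[0,1)\hookrightarrow M\times[0,1)$, so the problem reduces to the statement
\[\on{hocolim}_{V\in\cat{D}(M)}\on{Conf}(n,V\times(0,1))\times\on{Conf}(m,V)\to\on{Conf}(n,M\times(0,1))\times\on{Conf}(m,M).\]
Now I apply Theorem~\ref{lurie} with $X=\on{Conf}(n,M\times(0,1))\times\on{Conf}(m,M)$ and the functor $\chi\colon\cat{D}(M)\to\cat{U}(X)$ sending $V$ to $\on{Conf}(n,V\times(0,1))\times\on{Conf}(m,V)$. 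For a point $(p_1,\ldots,p_n,q_1,\ldots,q_m)\in X$, the Lurie subposet $\cat{D}(M)_{(p,q)}$ consists of $V\in\cat{D}(M)$ with $K\subseteq V$, where $K=\{\pi_M(p_1),\ldots,\pi_M(p_n),q_1,\ldots,q_m\}$ is a \emph{finite} subset of $M$. Given any two $V_1,V_2$ in this subposet, choosing small pairwise disjoint open disks centered at the points of $K$ inside the open set $V_1\cap V_2$ produces $V_3\in\cat{D}(M)$ with $K\subseteq V_3\subseteq V_1\cap V_2$. Hence $\cat{D}(M)_{(p,q)}$ is cofiltered, and its nerve is contractible.

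The main obstacle is noticing that the direct Lurie argument breaks down in the presence of boundary and that one must first pass to configurations via evaluation at the origin. Once this reduction is made, the analogue of the final step of Lemma~\ref{hocolim of functors} goes through, with the finite set $K\subset M$ playing the role that $\on{im}(\phi)\in\cat{D}(M)$ played in the disk case.
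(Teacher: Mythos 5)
Your proof is correct and takes a different route from the paper's. The paper applies Theorem~\ref{lurie} directly to the covering of $\Emb_f(D^{\sqcup p}\sqcup H^{\sqcup q},M\times[0,1))$ by the open subspaces $\Emb_f(D^{\sqcup p}\sqcup H^{\sqcup q},V\times[0,1))$, asserting that the poset $\cat{D}(M)_\phi$ is filtered for every $\phi$. As you observe, this can fail: the hypothesis of Theorem~\ref{lurie} forces each $\cat{D}(M)_\phi$ to be nonempty, yet a half-disk component of $\phi$ can spiral so that $\pi_M(\on{im}\,\phi)$ is not contained in any disjoint union of $(d-1)$-disks (for instance, an embedding $H\hookrightarrow S^1\times[0,1)$ whose projection to $S^1$ is all of $S^1$), and then $\cat{D}(M)_\phi$ is empty. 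What rescues the closed-disk Lemma~\ref{hocolim of functors} is that there $\on{im}\,\phi$ is itself an object of $\cat{D}(M)$, hence an initial object of $\cat{D}(M)_\phi$; that property is exactly what is lost for half-disks.

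Your reduction to configuration spaces via Proposition~\ref{framed embeddings of disks} repairs this cleanly: a configuration records only a finite set $K\subset M$, and any finite subset of an open set $V_1\cap V_2$ admits pairwise disjoint disk neighborhoods inside $V_1\cap V_2$, so the Lurie subposets $\cat{D}(M)_{(p,q)}$ are nonempty and cofiltered, hence contractible. Since the evaluation maps are natural in $V$ and homotopy colimits preserve objectwise weak equivalences, the resulting equivalence of configuration spaces transports back to the embedding spaces. In short, where the paper's written proof skips a step (and, as stated, has a gap), your argument supplies the missing reduction: the cost is one extra appeal to Proposition~\ref{framed embeddings of disks}, the gain is that Theorem~\ref{lurie} genuinely applies.
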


\begin{proof}
It suffices to prove that for each $p,q$, there is a weak equivalence in spaces:
\[\on{hocolim}_{V\in \cat{D}(M)}\Emb_f(D^{\sqcup p}\sqcup H^{\sqcup q},V\times[0,1))\simeq\Emb_f(D^{\sqcup p}\sqcup H^{\sqcup q},M\times[0,1))\]

It suffices to show, by \ref{lurie}, that for any $\phi\in\Emb(D^{\sqcup p}\sqcup H^{\sqcup q},M\times[0,1))$, the poset $\cat{D}(M)_\phi$ (which is the subposet of $\cat{D}(M)$ on open sets $V$ that are such that $V\times[0,1)\subset M\times[0,1)$ contains the image of $\phi$) is contractible. But it is easy to see that $\cat{D}(M)_\phi$ is filtered. Thus it is contractible.
\end{proof}

\begin{prop}
Let $(B,A): \cat{E}_d^\partial\to\cat{C}$ be a cofibrant $\oper{E}_d^\partial$-algebra, then we have:
\[\int_{M\times[0,1)}(B,A)\simeq \on{hocolim}_{V\in\cat{D}(M)}(B,A)(\delta(V))\]
\end{prop}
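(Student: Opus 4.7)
My plan is to imitate the two-step argument of Corollaries \ref{left} and \ref{nb} in the boundary setting. First, I would apply Proposition \ref{Operadic vs categorical}: since $(B,A)$ is a cofibrant $\oper{E}_d^\partial$-algebra, the operadic left Kan extension along $\oper{E}_d^\partial \to f\oMfld_d^\partial$ is computed by the bar construction
\[\int_{M\times[0,1)}(B,A) \simeq \on{B}(\Emb_f^\partial(-, M\times[0,1)), \cat{E}_d^\partial, (B,A)).\]

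Next, I would feed in the preceding lemma, which expresses the right $\oper{E}_d^\partial$-module $\Emb_f^\partial(-, M\times[0,1))$ as the homotopy colimit of the right modules $\Emb_f^\partial(-, V\times[0,1))$ for $V \in \cat{D}(M)$; equivalently as the realization of the simplicial right module $\on{B}(*, \cat{D}(M), \Emb_f^\partial(-, -\times[0,1)))$. Substituting this into the bar construction above and interchanging the order of the two resulting geometric realizations (the same manipulation of a bisimplicial object used in the proof of Corollary \ref{left}) yields
\[\int_{M\times[0,1)}(B,A) \simeq \on{hocolim}_{V\in\cat{D}(M)} \on{B}(\Emb_f^\partial(-, V\times[0,1)), \cat{E}_d^\partial, (B,A)) \simeq \on{hocolim}_{V\in\cat{D}(M)} \int_{V\times[0,1)}(B,A).\]

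For each $V \in \cat{D}(M)$ the chosen framed diffeomorphism $V \cong D^{\sqcup n}$ extends, using the standard framing on $[0,1)$, to a framed diffeomorphism $V\times[0,1) \cong H^{\sqcup n} = \delta(V)$ in $f\Mfld_d^\partial$, so the inner term becomes $\int_{\delta(V)}(B,A)$. A final appeal to Yoneda's lemma (as in the closing lines of Corollary \ref{nb}) identifies $\int_U(B,A)$ with $(B,A)(U)$ for any object $U$ of $\cat{E}_d^\partial$, and assembling these identifications gives the claimed equivalence.

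The only step requiring any genuine care is the interchange of the two realizations, but this is formal: the tensor product of $\cat{C}$ preserves colimits in each variable, so each level of the bar construction is colimit-preserving in the module variable, and hence so is the bar construction itself (being a geometric realization of such levels). Everything else is a verbatim transcription, to the boundaryful setting, of the two corollaries just cited, using the preceding lemma in place of Lemma \ref{hocolim of functors} and $\oper{E}_d^\partial$ in place of $\oper{E}_d$.
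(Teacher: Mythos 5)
Your proof is correct and is exactly the argument the paper has in mind: the paper's proof is just the one-liner ``The proof is a straightforward modification of \ref{nb},'' and you have carried out that modification verbatim, using the boundaryful homotopy-colimit lemma in place of Lemma \ref{hocolim of functors}, the same bisimplicial-realization interchange as in \ref{left}, and the same Yoneda identification as in \ref{nb}.
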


\begin{proof}
The proof is a straightforward modification of \ref{nb}.
\end{proof}

There is a morphism of operad $\oper{E}_{d-1}\to\oper{E}_d^\partial$ sending the unique color of $\oper{E}_{d-1}$ to $H$. Indeed $H$ is diffeomorphic to the product of the $(d-1)$-dimensional disk with $[0,1)$. Hence, for $(B,A)$ an algebra over $\oper{E}_d^\partial$, $A$ has an induced $\oper{E}_{d-1}$-strucure.

\begin{prop}\label{coro}
Let $(B,A)$ be an $\oper{E}_d^\partial$-algebra, then we have a weak equivalence:
\[\int_{M\times[0,1)}(B,A)\simeq \int_MA\]
\end{prop}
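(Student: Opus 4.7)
The plan is to reduce both sides of the desired equivalence to the same homotopy colimit indexed by $\cat{D}(M)$, using the two hocolim presentations already established. First, I would apply the previous proposition to a cofibrant replacement of $(B,A)$ to obtain
$$\int_{M \times [0,1)}(B,A) \simeq \on{hocolim}_{V \in \cat{D}(M)}(B,A)(\delta(V)),$$
where $\delta: \cat{D}(M) \to \cat{E}_d^\partial$ assigns to each $V$ a chosen parametrization of $V \times [0,1)$ as a disjoint union $H^{\boxplus n_V}$ of half-disks. Second, applying corollary \ref{nb} to $M$ viewed as a $(d-1)$-manifold, and with $A$ viewed as an $\oper{E}_{d-1}$-algebra via the operad map $\oper{E}_{d-1} \to \oper{E}_d^\partial$, yields
$$\int_M A \simeq \on{hocolim}_{V \in \cat{D}(M)} A(\delta'(V)),$$
where $\delta': \cat{D}(M) \to \cat{E}_{d-1}$ is the analogous parametrizing functor.

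The heart of the argument is to identify the two diagrams $\cat{D}(M) \to \cat{C}$. The operad map $\oper{E}_{d-1} \to \oper{E}_d^\partial$ is built from the diffeomorphism $H \cong D_{d-1} \times [0,1)$: on mapping spaces it sends a framed embedding $\phi: D_{d-1}^{\sqcup n} \to D_{d-1}^{\sqcup n'}$ to $\phi \times \id_{[0,1)}: H^{\sqcup n} \to H^{\sqcup n'}$. With compatible parametrizations, the functor $\delta$ coincides with the composition of $\delta'$ and the induced functor $\cat{E}_{d-1} \to \cat{E}_d^\partial$: an inclusion $V \subset V'$ in $\cat{D}(M)$ gives the embedding $V \times [0,1) \subset V' \times [0,1)$, which is precisely the product of $V \subset V'$ with $\id_{[0,1)}$. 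Since the $\oper{E}_{d-1}$-algebra structure on $A$ is, by definition, the restriction of $(B,A)$ along this operad map, we get $(B,A) \circ \delta = A \circ \delta'$ as diagrams $\cat{D}(M) \to \cat{C}$, and the two hocolims therefore agree.

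The only subtlety is to choose the parametrizations $\delta$ and $\delta'$ compatibly. This poses no real difficulty since the space of automorphisms of the corresponding objects in $\cat{E}_{d-1}$ and $\cat{E}_d^\partial$ is contractible, so any two choices yield weakly equivalent diagrams; concretely, I would fix $\delta'$ first and then define $\delta$ by the crossing-with-$[0,1)$ construction. No other obstacle is anticipated.
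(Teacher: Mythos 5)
Your proof is correct and follows the same route as the paper: apply the two hocolim presentations over $\cat{D}(M)$, then identify the diagrams by choosing $\delta$ to be $\delta'$ followed by the operad map $\cat{E}_{d-1}\to\cat{E}_d^\partial$, so that $(B,A)\circ\delta = A\circ\delta'$. Your discussion of compatible parametrizations makes explicit the convention the paper leaves implicit, but the argument is the same.
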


\begin{proof}
Let $\delta':\cat{D}(M)\to \cat{E}_{d-1}$ be defined as before. Then $\delta$ can be take to be the composite of $\delta'$ and the map $\cat{E}_{d-1}\to\cat{E}_d^{\partial}$.  

Now we prove the proposition. Because of the previous proposition, the left hand side is weakly equivalent to
$\on{hocolim}_{V\in\cat{D}(M)}(B,A)(\delta (V))$. But $(B,A)(\delta(V))$ is $A(\delta'(V))$. Therefore, by \ref{nb} $\on{hocolim}_{V\in\cat{D}(M)}(B,A)(\delta (V))$ is weakly equivalent to $\int_M A$
\end{proof}

\section{$\oper{KS}$ and its higher versions.}

In this section, we recall the definition of the operad $\oper{KS}$ defined in \cite{kontsevichnotes}. We construct an equivalent version of that operad as well as higher dimensional analogues of it.

\begin{defi}
Let $D$ be the $2$-dimensional disk. An injective continuous map $D\to S^1\times(0,1)$ is said to be \emph{rectilinear} if it can be factored as
\[D\goto{l}\mathbb{R}\times(0,1)\to\mathbb{R}\times(0,1)/\mathbb{Z}=S^1\times(0,1)\]
where the map $l$ is rectilinear and the second map is the quotient by the $\mathbb{Z}$-action.

We say that an embedding $S^1\times [0,1)\to S^1\times[0,1)$ is rectilinear if it is of the form $(z,t)\mapsto (z+z_0,at)$ for some fixed $z_0\in S^1$ and $a\in (0,1]$.

We denote by $\Emb_{lin}^{\partial}(S^1\times[0,1)\sqcup D^{\sqcup n},S^1\times[0,1)$ the topological space of injective maps whose restriction to each disk and to $S^1\times[0,1)$ is rectilinear.
\end{defi}

\begin{defi}
The Kontsevich-Soibelman's operad $\oper{KS}$ has two colors $a$ and $m$ and its spaces of operations are as follows
\begin{align*}
\oper{KS}(a^{\boxplus n};a)&=\oper{D}_2(n)\\
\oper{KS}(a^{\boxplus n}\boxplus m;m)&=\Emb_{lin}^\partial(S^1\times[0,1)\sqcup D^{\sqcup n},S^1\times[0,1))
\end{align*}
Any other space of operations is empty.
\end{defi}

Now we define generalizations of $\oper{KS}$. 

\begin{defi}
Let $S$ be a $(d-1)$-manifold with framing $\tau$. We define $S_\tau^{\circlearrowright}\oMod$ to be the operad with two colors $a$ and $m$ and spaces of operations are as follows
\begin{align*}
S_\tau^{\circlearrowright}\oMod(a^{\boxplus n};a)&=\oper{E}_d(n)\\
S_\tau^{\circlearrowright}\oMod(a^{\boxplus n}\boxplus m;m)&=\Emb_f^\partial(S\times[0,1)\sqcup D^{\sqcup n},S\times[0,1))
\end{align*}

\end{defi}

The category $S_\tau^{\circlearrowright}\Mod$ is the category whose objects are disjoint unions of copies of $S\times[0,1)$ and $D$. 

\begin{prop}\label{restriction fibration}
Let $S$ be a compact connected $(d-1)$-manifold. Let $N$ be a manifold with a boundary diffeomorphic to $S$ and let $M$ be an object of $S_\tau^{\circlearrowright}\Mod$ which can be expressed as a disjoint union
\[M=P\sqcup Q\]
in which one of the first factor is of the form $S\times[0,1)\sqcup D^{\sqcup n}$ and the other is a disjoint union of disks. Then the restriction maps
\[\Emb_f(M,N)\to\Emb_f(P,N)\]
is a fibration.
\end{prop}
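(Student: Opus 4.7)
The plan is to apply Cerf's criterion (Lemma \ref{locally trivial fibration}). Let $G = \on{Diff}(N, \partial N)$ denote the topological group of diffeomorphisms of $N$ fixing the boundary pointwise. For $X = M$ or $X = P$, post-composition by $G$ acts compatibly on the three spaces $\Emb(X, N)$, $\Map^\partial_{\on{GL}(d)}(\on{Fr}(TX), \on{Fr}(TN))$, and $\Map((X, \partial X), (N, \partial N))$ entering the homotopy pullback of Definition \ref{framed embedding with boundary}, hence induces an action on $\Emb_f(X, N)$ that is natural in $X$. The restriction $\Emb_f(M, N) \to \Emb_f(P, N)$ is therefore $G$-equivariant.

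It remains to produce, for each $\phi = (\phi_0, p) \in \Emb_f(P, N)$ with underlying smooth embedding $\phi_0 \colon P \to N$, a continuous local section $\sigma$ of the orbit map $G \to \Emb_f(P, N)$, $g \mapsto g \cdot \phi$. Choose a tubular neighborhood $T$ of $\phi_0(P)$ in $N$ adapted to the collar along $\partial N$. For $\phi' = (\phi_0', p')$ in a sufficiently small neighborhood of $\phi$ in the weak $C^1$-topology, the underlying embedding $\phi_0'$ lies in $T$ and differs from $\phi_0$ by a compactly supported small isotopy. Parametrized isotopy extension (see \cite{cerftopologie}, in the spirit of the proofs earlier in this section, e.g.\ that $\Emb(S \times [0,1), M) \to \Emb(S, \partial M)$ is a Hurewicz fibration) yields a continuous family of compactly supported ambient diffeomorphisms $\sigma(\phi') \in G$ with $\sigma(\phi') \circ \phi_0 = \phi_0'$. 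The induced action of $\sigma(\phi')$ on the path-of-frames data of $\phi$ gives $\sigma(\phi') \cdot \phi$, whose underlying embedding is $\phi_0'$; its path data may differ from $p'$, but the discrepancy lies in a contractible space of paths-of-frames and can be corrected continuously in $\phi'$ after further shrinking the neighborhood.

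The main obstacle is controlling the parametrized ambient isotopy extension in the presence of the non-compact factor $S \times [0,1) \subset P$ (when $P$ contains it) while preserving the boundary framing compatibility. The non-compactness is tamed by the weak $C^1$-topology, in which $\phi_0'$ close to $\phi_0$ agrees with $\phi_0$ outside a fixed compact set, so the ambient extension can be taken compactly supported; and the boundary condition is arranged by forcing $\sigma(\phi')$ to be the identity near $\partial N$ via a cutoff function in the collar direction. Paracompactness of $\Emb_f(P, N)$, inherited from the metrizability of the weak $C^1$-topology on the underlying smooth embedding space, then upgrades the local triviality produced by Cerf's lemma to a Hurewicz fibration in $\cat{Top}$.
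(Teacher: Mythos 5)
Your proof takes a genuinely different route from the paper's. The paper does not invoke isotopy extension at all: it embeds $S_\tau^{\circlearrowright}\Mod$ into the presheaf category $\on{Fun}(S_\tau^{\circlearrowright}\Mod\op,\S)$ with the Day convolution monoidal structure and the projective model structure, observes that representables are cofibrant, uses $\Emb_f(-,M)\cong\Emb_f(-,P)\otimes\Emb_f(-,Q)$ to conclude that $\Emb_f(-,P)\to\Emb_f(-,M)$ is a cofibration, and then reads off the fibration statement from the $\S$-enriched SM7 axiom. Your argument is instead in the spirit of the Cerf-style isotopy-extension arguments used earlier in the paper.

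Unfortunately, there is a real gap, and it is precisely the difficulty that the paper's categorical argument is designed to sidestep. You assert that ``in the weak $C^1$-topology, $\phi_0'$ close to $\phi_0$ agrees with $\phi_0$ outside a fixed compact set.'' This is backwards: that is a feature of the \emph{strong} (Whitney) topology. A basic weak-$C^1$ neighborhood of $\phi_0$ is determined by a single compact $K\subset P$ and a single $\varepsilon>0$; it constrains $\phi_0'$ only on $K$ and says nothing about $\phi_0'$ outside $K$. Since the components of $P$ are noncompact (open disks, and $S\times[0,1)$ when it occurs in $P$), a weak-close $\phi_0'$ may behave arbitrarily near the ends. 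In particular $\phi_0'(P)$ can be a much larger open subset of $N$ than $\phi_0(P)$ — e.g.\ for $P=D$ one can make $\phi_0'(D)$ exhaust most of the interior of $N$ while $\phi_0(D)$ is a tiny ball — and then there is no $g\in\on{Diff}(N,\partial N)$ with $g\circ\phi_0=\phi_0'$. So the orbit map $\on{Diff}(N,\partial N)\to\Emb_f(P,N)$, $g\mapsto g\cdot\phi$, is not even locally surjective near such $\phi$, and Cerf's criterion cannot be applied with $B=\Emb_f(P,N)$. Note that the paper's own uses of Lemma \ref{locally trivial fibration} always take as $B$ either a finite-dimensional space ($\on{Fr}(TM)$, $M\times\on{GL}(d)$) or an embedding space of a \emph{compact} source ($\Emb(S,\partial M)$ with $S$ compact), never the embedding space of a noncompact manifold in the weak topology. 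And switching to the strong topology — where your isotopy-extension step would be correct — is not an option here, since the paper commits to the weak topology so that composition of embeddings remains continuous for noncompact sources.
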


\begin{proof}
The category $S_\tau^{\circlearrowright}\Mod$ is a symmetric monoidal category. One can consider the category $\on{Fun}(S_\tau^{\circlearrowright}\Mod\op,\S)$. It is a symmetric monoidal category for the convolution tensor product. The Yoneda's embedding:
\[S_\tau^{\circlearrowright}\Mod\to\on{Fun}(S_\tau^{\circlearrowright}\Mod\op,\S)\]
is a symmetric monoidal functor. By the enriched Yoneda's lemma, the space $\Emb_f(M,N)$ can be identified with the space of natural transformations
\[\Map_{\on{Fun}(S_\tau^{\circlearrowright}\Mod\op,\S)}(\Emb_f(-,M),\Emb_f(-,N))\]
and similarly for $\Emb_f(P,N)$ and $\Emb_f(Q,N)$. The category $\on{Fun}(S_\tau^{\circlearrowright}\Mod\op,\S)$ is a symmetric monoidal model category in which fibrations and weak equivalences are objectwise.

In fact, more generally, if $\cat{A}$ is a small simplicial symmetric monoidal category, the category of simplicial functors to simplicial sets $\on{Fun}(\cat{A},\S)$ with the projective model structure and the Day tensor product is a symmetric monoidal model category (this is proved in \cite[Proposition 2.2.15]{isaacsoncubical}). It is easy to check that in this model structure, a representable functor is automatically cofibrant (this comes from the characterization in terms of lifting against trivial fibrations together with the fact that trivial fibration in $\S$ are epimorphisms).  Moreover, we have the identity
\[\Emb_f(-,M)\cong\Emb_f(-,P)\otimes\Emb_f(-,Q)\]
This immediatly implies that 
\[\Emb_f(-,P)\to\Emb_f(-,M)\]
is a cofibration in  $\on{Fun}(S_\tau^{\circlearrowright}\Mod\op,\S)$.
But the category  $\on{Fun}(S_\tau^{\circlearrowright}\Mod\op,\S)$ is also a model category enriched in $\S$, therefore, the induced map
\[\Map_{\on{Fun}(S_\tau^{\circlearrowright}\Mod\op,\S)}(\Emb_f(-,M),\Emb_f(-,N))\]
\[\to\Map_{\on{Fun}(S_\tau^{\circlearrowright}\Mod\op,\S)}(\Emb_f(-,P),\Emb_f(-,N))\]
is a fibration by the pushout-product property.
\end{proof}

Note that a linear embedding preserves the framing on the nose. Therefore, there is a well defined inclusion
\[\oper{KS}\to (S^1)^{\circlearrowright}_\tau\oMod\]

\begin{prop}
This map is a weak equivalence.
\end{prop}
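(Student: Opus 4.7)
My plan is to exploit that the inclusion $\oper{KS}\to(S^1)^\circlearrowright_\tau\oMod$ is already the identity on the set of colors, so it suffices to show it is a weak equivalence on each space of operations. The only nonempty spaces of operations in $\oper{KS}$ are of the form $(a^{\boxplus n};a)$ and $(a^{\boxplus n}\boxplus m;m)$; on the first family the map in question is the inclusion $\oper{D}_2(n)\to\oper{E}_2(n)$, which is the weak equivalence of Proposition \ref{framed vs linear}. So everything reduces to showing that
\[
\iota_n:\Emb_{lin}^\partial(S^1\times[0,1)\sqcup D^{\sqcup n},S^1\times[0,1))\to\Emb_f^\partial(S^1\times[0,1)\sqcup D^{\sqcup n},S^1\times[0,1))
\]
is a weak equivalence for every $n$.

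The plan is to fit $\iota_n$ into a commutative square whose vertical arrows are the ``restrict to the $S^1\times[0,1)$ slot'' maps, and then identify bases and fibers separately. The restriction on the framed side is a fibration by Proposition \ref{restriction fibration}, and the restriction on the linear side is a fiber bundle --- this is easily seen by applying Lemma \ref{locally trivial fibration} to the transitive action of rotations and positive dilations on the base of rectilinear self-embeddings of the tube. Once the square is set up, it suffices to check that both the map on bases and the map on each fiber are weak equivalences; the conclusion for total spaces is then the standard comparison of fibrations.

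For the bases: on the linear side $\Emb_{lin}^\partial(S^1\times[0,1),S^1\times[0,1))$ is homeomorphic to $S^1\times(0,1]$ via the parametrization $(z_0,a)\mapsto[(z,t)\mapsto(z+z_0,at)]$ and so is homotopy equivalent to $S^1$; on the framed side, Proposition \ref{restriction to the boundary} gives a weak equivalence to $\Emb_f(S^1,S^1)$, which a direct unwinding of Definition \ref{framed embedding} (using the canonical trivialization of $\on{Fr}(TS^1)$) identifies with $\on{Diff}^+(S^1)\simeq S^1$, since the orientation-reversing component is killed by the absence of paths in framings while the orientation-preserving component contributes its usual homotopy type because $\Map(S^1,\mathbb{R}_{>0})$ is contractible. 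The bottom map of the square becomes the inclusion of rotations into $\on{Diff}^+(S^1)$ after forgetting the contractible scaling factor, hence is a weak equivalence. For the fibers over $(z_0,a)$, the complement of the image of the linear self-embedding is $S^1\times(a,1)$, so both fibers reduce to $\Emb_?(D^{\sqcup n},S^1\times(a,1))$; the evaluation-at-centers maps give Hurewicz fibrations to $\on{Conf}(n,S^1\times(a,1))$ with contractible fibers on each side, by the exact argument of Proposition \ref{framed vs linear}, and this forces the map on fibers to be a weak equivalence.

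The main obstacle will be the computation of the framed base, i.e.\ the identification $\Emb_f(S^1,S^1)\simeq S^1$: since this is a homotopy pullback of spaces with several components, one has to be careful in checking that only the orientation-preserving diffeomorphisms survive and that they contribute exactly $S^1$. Everything else is a direct application of tools already developed in the paper (Propositions \ref{framed vs linear}, \ref{restriction to the boundary}, \ref{restriction fibration} and Lemma \ref{locally trivial fibration}).
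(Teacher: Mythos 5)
Your decomposition is genuinely dual to the paper's: you restrict to the tube factor $S^1\times[0,1)$, whereas the paper restricts to the disk factor $D^{\sqcup n}$, so your bases (resp.\ fibers) play the role of the paper's fibers (resp.\ bases). This is a legitimate alternative, and your computation of $\Emb_f(S^1,S^1)\simeq S^1$ from the homotopy-pullback definition, with only the orientation-preserving component surviving the compatibility of paths with the constant framing, is correct.

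There is, however, a real gap on the linear side. You assert that the restriction map
\[
\Emb_{lin}^\partial(S^1\times[0,1)\sqcup D^{\sqcup n},S^1\times[0,1))\longrightarrow\Emb_{lin}^\partial(S^1\times[0,1),S^1\times[0,1))\cong S^1\times(0,1]
\]
is a fiber bundle, invoking Lemma \ref{locally trivial fibration} for ``the transitive action of rotations and positive dilations.'' But $S^1\times\mathbb{R}_{>0}$ does not act on $S^1\times(0,1]$ at all (a dilation by a factor $>1$ leaves the parameter interval, and no topological group can act transitively on a manifold with nonempty boundary), so the hypotheses of Lemma \ref{locally trivial fibration} are not met. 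Worse, for $n\ge 1$ the restriction map is not even a Hurewicz fibration over the full base: the fiber over any $(z_0,1)$ is empty (the rotation is surjective, leaving no room for the disks), while fibers over $(z_0,a)$ with $a<1$ are nonempty; since $S^1\times(0,1]$ is connected and the total space is nonempty, surjectivity would be forced if the map were a fibration. So ``the standard comparison of fibrations'' does not apply as stated. The argument can be repaired by first cutting both total spaces down to the preimage of the open, deformation-retract subspace $\{a<1\}\subset S^1\times(0,1]$ and checking local triviality there, but this needs to be said. For comparison, the paper's split avoids this entirely: restricting to the disks makes the base the configuration-type space $\Emb_f(D^{\sqcup n},S^1\times(0,1))$, where the linear/framed comparison is already supplied by Proposition \ref{framed vs linear}, and the identification $S^1\simeq\Emb_f(S^1,S^1)$ is needed only fiberwise (via Proposition \ref{restriction to the boundary}). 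Your route demands both new identifications plus this additional care about the degenerate stratum of the base.
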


\begin{proof}
There is a restriction map
\[S_\tau^{\circlearrowright}\oMod(a^{\boxplus n}\boxplus m;m)\to\Emb_f(D^{\sqcup n},S\times[0,1))\]
This map is a fibration by \ref{restriction fibration}. Its fiber over a particular configuration of disks is the space of embeddings of $S\times[0,1)$ into the complement of that configuration. By \ref{restriction to the boundary}, this space is weakly equivalent to $\Emb_f(S,S)$.

We have a diagram
\[
\xymatrix{
\Emb_{lin}^\partial(S^1\times[0,1)\sqcup D^n,S^1\times[0,1)\ar[d]\ar[r]&\Emb_f^\partial(S^1\times[0,1)\sqcup D^{\sqcup n},S^1\times[0,1))\ar[d]\\
\Emb_{lin}(D^{\sqcup n},S^1\times(0,1))\ar[r]&\Emb_f(D^{\sqcup n},S^1\times(0,1))
}
\]
Both vertical maps are fibrations. The bottom map is a weak equivalence since both sides are weakly equivalent to $\on{Conf}(n,S^1\times(0,1))$. To prove that the upper horizontal map is a weak equivalence, it suffices to check that it induces an equivalence on each fiber. The map induced on the fibers is weakly equivalent to the inclusion
\[S^1\to\Emb_f(S^1,S^1)\]
It is well-kown that this map is a weak equivalence.
\end{proof}

\section{Action of the higher version of $\oper{KS}$}

We are now ready to state and prove the main theorem of this paper

\begin{theo}\label{main}
Let $(B,A)$ be an algebra over the operad $\oper{E}_d^\partial$ in the category $\cat{C}$. Let $M$ be a framed $(d-1)$-manifold and $\tau$ be the product framing on $TM\oplus\mathbb{R}$. The pair $(B,\int_M A)$ is weakly equivalent to an algebra over the operad $M_\tau^{\circlearrowright}\oMod$.
\end{theo}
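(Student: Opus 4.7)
The plan is to realize $M_\tau^{\circlearrowright}\oMod$ as a sub-operad of $f\oMfld_d^\partial$ and transport the algebra structure provided by factorization homology. First, I define a morphism of operads
\[
\iota: M_\tau^{\circlearrowright}\oMod \longrightarrow f\oMfld_d^\partial
\]
sending the color $a$ to $D$ and the color $m$ to $M\times[0,1)\in\mathfrak{M}^\partial$ (with its product framing $\tau$). On operation spaces, $\iota$ is the tautological inclusion: the defining spaces $\oper{E}_d(n)=\Emb_f(D^{\sqcup n},D)$ and $\Emb_f^\partial(M\times[0,1)\sqcup D^{\sqcup n},M\times[0,1))$ of $M_\tau^{\circlearrowright}\oMod$ are literally the corresponding embedding spaces of $f\oMfld_d^\partial$, and composition on both sides is composition of embeddings.

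Second, by Definition \ref{factorization} and its analogue for manifolds with boundary, factorization homology provides a symmetric monoidal functor $\int_?(B,A):f\Mfld_d^\partial\to\cat{C}$, i.e.\ an $f\oMfld_d^\partial$-algebra in $\cat{C}$. Pulling it back along $\iota$ produces an $M_\tau^{\circlearrowright}\oMod$-algebra $(B',H')$ with $B'=\int_D(B,A)$ and $H'=\int_{M\times[0,1)}(B,A)$. I then identify these values with $B$ and $\int_M A$. Since $\oper{E}_d^\partial\hookrightarrow f\oMfld_d^\partial$ is fully faithful on the colors $z=D$ and $h=H$, the unit of the operadic Kan extension adjunction (applied after a cofibrant replacement of $(B,A)$, which is colorwise cofibrant by the results of Section 2) shows $B'\simeq B$. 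The other value is identified by Proposition \ref{coro}: $H'\simeq \int_M A$.

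Finally, because $M_\tau^{\circlearrowright}\oMod$ is $\Sigma$-cofibrant (its operation spaces are smooth manifolds on which the symmetric groups permuting disk labels act freely), the category of its algebras in $\cat{C}$ carries a model structure with componentwise weak equivalences, so the two componentwise equivalences above display $(B,\int_M A)$ as weakly equivalent to $(B',H')$. The main technical obstacle is confirming that factorization homology really behaves as a homotopy-invariant symmetric monoidal functor and that the derived operadic left Kan extension at $D$ recovers $B$ up to weak equivalence. These rely on Proposition \ref{Operadic vs categorical} comparing operadic and categorical left Kan extensions together with the $\Sigma$-cofibrancy of $\oper{E}_d^\partial$; both are in place from the earlier sections, so the argument reduces to the essentially formal pullback-and-transport above.
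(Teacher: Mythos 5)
Your argument is essentially the paper's own: factorization homology on $f\Mfld_d^\partial$ is a symmetric monoidal functor, $M_\tau^{\circlearrowright}\oMod$ maps to the endomorphism operad of the pair $(D, M\times[0,1))$ there, and the resulting algebra has components $\int_D(B,A)\simeq B$ (by Yoneda on the coend with the cofibrant replacement) and $\int_{M\times[0,1)}(B,A)\simeq\int_M A$ (Proposition~\ref{coro}). The closing remarks about $\Sigma$-cofibrancy of $M_\tau^{\circlearrowright}\oMod$ are not needed for the stated form of the theorem, but everything else matches the paper's proof.
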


\begin{proof}
The construction $\int_{-}(B,A)$ is a simplicial functor $ f\Mfld_d^\partial\to\cat{C}$. Hence, $\int_{-}(B,A)$ is a functor from the full subcategory of $ f\Mfld_d^\partial$ spanned by disjoint unions of copies of $D$ and $M\times[0,1)$ to $\cat{C}$. Moreover this functor is symmetric monoidal. The operad $M_\tau^{\circlearrowright}\oMod$ has a map to the endomorphism operad of the pair $(D,M\times[0,1))$ in the symmetric monoidal category $f\Mfld_d^\partial$, therefore $(\int_D(B,A),\int_{M\times[0,1)}(B,A))$ is an algebra over $M_\tau^{\circlearrowright}$. To conclude, we use the fact that $\int_D(B,A)\cong B$ by Yoneda's lemma and $\int_{M\times[0,1)}(B,A)\simeq \int_M A$ by \ref{coro}.
\end{proof}

This theorem is mainly interesting because of the following theorem due to Thomas (see \cite{thomaskontsevich}):

\begin{theo}
Let $A$ be an $\oper{E}_{d-1}$-algebra in $\cat{C}$, then there is an algebra $(B',A')$ over $\oper{E}_d^\partial$ such that $B'$ is weakly equivalent to $\on{HH}_{\oper{E}_{d-1}}(A)$ and $A'$ is weakly equivalent to $A$.
\end{theo}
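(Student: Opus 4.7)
The plan is to construct the $\oper{E}_d^\partial$-algebra structure on the pair $(\on{HH}_{\oper{E}_{d-1}}(A),A)$ by exploiting the universal property that characterizes higher Hochschild cohomology as the $\oper{E}_{d-1}$-centralizer (or derived center) of $A$. The guiding principle is that $\on{HH}_{\oper{E}_{d-1}}(A)$ should be the \emph{universal} $\oper{E}_d$-algebra that acts on $A$ compatibly with its $\oper{E}_{d-1}$-structure, so that an $\oper{E}_d^\partial$-algebra $(B',A')$ with $A'\simeq A$ corresponds to an $\oper{E}_d$-map $B'\to \on{HH}_{\oper{E}_{d-1}}(A)$; setting $B'=\on{HH}_{\oper{E}_{d-1}}(A)$ itself and taking the identity map then produces the required structure.

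First I would recall a model of $\on{HH}_{\oper{E}_{d-1}}(A)$ as the derived endomorphism object $\R\hom_{A}(A,A)$, where $A$ is viewed as a module over itself in the category of $\oper{E}_{d-1}$-$A$-modules. This carries an obvious $\oper{E}_{d-1}$-algebra action on $A$ by evaluation, so half of the swiss-cheese data is automatic.

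Second I would produce the $\oper{E}_d$-structure on $\on{HH}_{\oper{E}_{d-1}}(A)$. The cleanest route is via the additivity theorem of Dunn--Lurie together with the fact that the category of $\oper{E}_{d-1}$-$A$-modules is itself an $\oper{E}_1$-monoidal category under the relative tensor product $\otimes_A$; endomorphisms of the unit $A$ in an $\oper{E}_1$-monoidal $\oper{E}_{d-1}$-category are an $\oper{E}_d$-algebra. Equivalently, one can use the geometric model developed in the paper: factorization homology of $A$ over a small half-disk neighborhood of a point on the boundary of $\mathbb{H}^d$ recovers $A$, and the operations coming from arranging such half-disks together with interior disks assemble into the $\oper{E}_d^\partial$-structure on the pair.

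Third—and this is the main technical step—I would verify that the two structures above are compatible in the sense of the swiss-cheese operad, i.e.\ that the diagrams mixing $\oper{E}_d$-multiplications in the bulk with $\oper{E}_{d-1}$-module actions on the boundary commute coherently. The most natural way to check this is to use the factorization-homology description of $\oper{E}_d^\partial$-algebras over the half-disk $H$ developed in the previous section: the pair $(\int_D(-), \int_{H}(-))$ gives a symmetric monoidal functor from the category generated by $D$ and $H$ to $\cat{C}$, and the endomorphism operad of the pair $(D,H)$ in $f\cat{Man}_d^\partial$ receives a map from $\oper{E}_d^\partial$ tautologically. The compatibility we need is then equivalent to identifying $\int_D$ with $\on{HH}_{\oper{E}_{d-1}}$, which can be done because both sides are characterized by the same universal property with respect to $\oper{E}_{d-1}$-modules on $H$.

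The hard part will be making the universal property precise in a model-categorical setting: one needs to check that the $\oper{E}_d$-algebra structure on centralizers produced via the additivity theorem agrees, up to coherent homotopy, with the one obtained from the swiss-cheese operations, so that no signs or associativity obstructions slip in. Fressé--Willwacher-style obstruction theory or direct operadic arguments (as carried out by Thomas in \cite{thomaskontsevich}) are needed here; the $\Sigma$-cofibrancy results of Section~2 and the homotopy invariance of the operadic Kan extension (proposition \ref{invariance of operadic coend}) are what make such an identification rigorous inside our model-categorical framework.
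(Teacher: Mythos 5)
The paper does not actually prove this theorem: it is stated as an external result of Thomas with a citation to \cite{thomaskontsevich} in place of a proof, and the present paper uses it as a black box. Your sketch instead outlines the centralizer/additivity strategy (the $\infty$-categorical route, in the spirit of Lurie's treatment of centralizers in Higher Algebra), which is genuinely different from Thomas's concrete construction of an explicit cofibrant resolution of the swiss-cheese operad and a direct action on the Hochschild complex. Both strategies are legitimate, and it is worth being aware that they are not the same.

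That said, your sketch has a real gap where the theorem's actual content lies. You correctly isolate two pieces of structure: an $\oper{E}_d$-algebra structure on $\on{HH}_{\oper{E}_{d-1}}(A)$ via Dunn--Lurie additivity, and an evaluation action of $\on{HH}_{\oper{E}_{d-1}}(A)$ on $A$. But the theorem asserts a single coherent $\oper{E}_d^\partial$-algebra, and the swiss-cheese compatibilities that interleave bulk $\oper{E}_d$ operations with boundary $\oper{E}_{d-1}$ operations and the module action do \emph{not} follow formally from the two structures existing separately --- producing exactly these coherences is the point of the swiss-cheese Deligne conjecture. Your third step tries to reduce the verification to ``identifying $\int_D$ with $\on{HH}_{\oper{E}_{d-1}}$,'' but that identification presupposes you already have an $\oper{E}_d^\partial$-algebra $(B',A')$ to feed into factorization homology over $D$ and $H$; it is the conclusion you are after, not an available input. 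And your final paragraph explicitly defers the core verification to ``Fress\'e--Willwacher-style obstruction theory or direct operadic arguments,'' which is an acknowledgement that the argument is not complete. In the centralizer approach the correct move is to build the centralizer \emph{together with} its action as a single universal object among $\oper{E}_d^\partial$-algebras with prescribed boundary part $A$ (equivalently, as the terminal object of the relevant overcategory), from which the $\oper{E}_d$-structure and the evaluation action both fall out; constructing them separately and then trying to glue leaves precisely the hard step undone.
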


Combining these two results we have the following

\begin{coro}\label{mainbis}
We keep the notations of \ref{main}. The pair $(\on{HH}_{\oper{E}_{d-1}}(A),\int_M A)$ is weakly equivalent to an algebra over the operad $M_\tau^{\circlearrowright}\oMod$.
\end{coro}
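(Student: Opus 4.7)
The plan is to chain together Thomas's theorem and Theorem \ref{main}, and then check that the weak equivalences survive factorization homology.

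First I would apply Thomas's theorem to the $\oper{E}_{d-1}$-algebra $A$, obtaining an $\oper{E}_d^\partial$-algebra $(B',A')$ together with weak equivalences $B'\simeq \on{HH}_{\oper{E}_{d-1}}(A)$ and $A'\simeq A$, the latter taking place in the category of $\oper{E}_{d-1}$-algebras (recall that the ``boundary'' color of any $\oper{E}_d^\partial$-algebra acquires a canonical $\oper{E}_{d-1}$-structure via the operad map $\oper{E}_{d-1}\to\oper{E}_d^\partial$ noted before Proposition \ref{coro}).

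Next I would apply Theorem \ref{main} to $(B',A')$, producing an $M_\tau^{\circlearrowright}\oMod$-algebra weakly equivalent to $(B',\int_M A')$; the two coordinates arise from $\int_D(B',A')\cong B'$ by Yoneda and $\int_{M\times[0,1)}(B',A')\simeq\int_M A'$ by Proposition \ref{coro}. It then remains to replace this pair by $(\on{HH}_{\oper{E}_{d-1}}(A),\int_M A)$ up to weak equivalence. The first coordinate is immediate from Thomas's theorem; for the second, it suffices to show $\int_M A'\simeq \int_M A$, which is the homotopy invariance of factorization homology furnished by part (1) of Proposition \ref{invariance of operadic coend} applied to the right module $\Emb_f(-,M)$ over $\oper{E}_{d-1}$ after taking cofibrant replacements.

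The only subtle point, and hence the main obstacle I would guard against, is that the individual weak equivalences of pairs must fit together into a zig-zag compatible with the $M_\tau^{\circlearrowright}\oMod$-action. This is however formal: Theorem \ref{main} constructs that action by restricting the symmetric monoidal functor $\int_{-}(B',A')$ on $f\Mfld_d^\partial$ to the full subcategory spanned by disjoint unions of copies of $D$ and $M\times[0,1)$, and this construction is itself homotopy invariant in the algebra variable (indeed, Proposition \ref{invariance of operadic coend} is precisely such an invariance statement). Concretely, a cofibrant replacement in $\cat{C}[\oper{E}_d^\partial]$ of an equivalence $(B',A')\to(\on{HH}_{\oper{E}_{d-1}}(A), A)$ of (pairs of appropriately interpreted) algebras is sent by $\int_{-}$ to a levelwise equivalence of symmetric monoidal functors, which pulls back to an equivalence of $M_\tau^{\circlearrowright}\oMod$-algebras via the operad map $M_\tau^{\circlearrowright}\oMod\to\oEnd(D,M\times[0,1))$.
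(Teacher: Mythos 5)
Your proposal is correct and fills in the details that the paper leaves implicit (the paper's entire ``proof'' of the corollary is the single phrase ``Combining these two results we have the following''). Your identification of the two steps -- applying Thomas's theorem to get $(B',A')$, then Theorem \ref{main} to $(B',A')$, and finally using homotopy invariance of factorization homology (Proposition \ref{invariance of operadic coend}~(1)) to pass from $\int_M A'$ to $\int_M A$ -- is exactly the intended argument. One small remark: the worry in your last paragraph about the zig-zag being compatible with the $M_\tau^{\circlearrowright}\oMod$-action is unnecessary for the statement as formulated, since ``the pair $(X,Y)$ is weakly equivalent to an algebra over $\oper{O}$'' in this paper means only that there exists an $\oper{O}$-algebra whose two underlying objects are coordinate-wise weakly equivalent to $X$ and $Y$ in $\cat{C}$; it does not assert an equivalence of $\oper{O}$-algebras. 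Your extra care does no harm, but it is not required by the claim.
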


The previous theorem has the following interesting corollary:

\begin{theo}
Let $(M,\tau)$ be a framed $(d-1)$-dimensional and $N$ be a $(d-1)$-connected manifold. The pair $(\Map(S^{d-1},N)^{-TN},\Sigma^{\infty}_+\Map(M,N))$ is weakly equivalent to an algebra over $M_\tau^{\circlearrowright}\oMod$.
\end{theo}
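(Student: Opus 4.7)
The plan is to realize the claimed pair as the output of Corollary \ref{mainbis} applied to a specific $\oper{E}_{d-1}$-algebra, and then to identify the two resulting spectra via known factorization-homology computations.

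First, I set $A = \Sigma^{\infty}_+ \Omega^{d-1} N$. Since $N$ is $(d-1)$-connected, the iterated loop space $\Omega^{d-1} N$ is path-connected, hence grouplike, so $A$ is a (grouplike) $\oper{E}_{d-1}$-algebra in the category of modules over the sphere spectrum. Corollary \ref{mainbis} applied to this algebra immediately yields an algebra structure on the pair $(\HH{}_{\oper{E}_{d-1}}(A),\ \int_M A)$ over $M_\tau^{\circlearrowright}\oMod$. It therefore remains to produce weak equivalences
\[
\int_M A\ \simeq\ \Sigma^{\infty}_+\Map(M,N)\qquad\text{and}\qquad \HH{}_{\oper{E}_{d-1}}(A)\ \simeq\ \Map(S^{d-1},N)^{-TN}.
\]

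For the first equivalence, I would invoke non-abelian Poincaré duality (as in \cite{luriehigher}, or equivalently in the Ayala–Francis setup used in \cite{ayalastructured}): for a framed $n$-manifold $M$ and an $n$-connected pointed space $N$, factorization homology of $\Sigma^{\infty}_+\Omega^{n} N$ over $M$ computes $\Sigma^{\infty}_+\Map_c(M,N)$. The grouplikeness of $\Omega^{d-1}N$, which is guaranteed by the connectivity hypothesis on $N$, is exactly the hypothesis needed; applied with $n=d-1$ this identifies $\int_M A$ with $\Sigma^{\infty}_+\Map(M,N)$ (and agrees with compactly-supported maps when $M$ is compact, which is the intended case).

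For the second equivalence, the idea is that $\HH{}_{\oper{E}_{d-1}}(A)$ is a twisted form of factorization homology over the $(d-1)$-sphere. More precisely, one uses the identification of the $\oper{E}_n$-center of an $\oper{E}_n$-algebra $A$ with the derived endomorphisms of $A$ viewed as an $\oper{E}_n$-module over itself, which can be realized as sections of a bundle of factorization homologies over $S^{n}$ with fiber $A$ twisted by the tangential structure. For $A=\Sigma^{\infty}_+\Omega^{d-1}N$ this computation, combined with non-abelian Poincaré duality over $S^{d-1}$ (with a shift coming from the normal bundle of a point in $S^{d-1}$ accounting for the $-TN$ twist), yields the Thom spectrum $\Map(S^{d-1},N)^{-TN}$. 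This identification is essentially due to Ayala–Francis; it is also the content of Francis's thesis and reappears in Ginot–Tradler–Zeinalian.

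The genuinely substantial step is the last identification of $\HH{}_{\oper{E}_{d-1}}(A)$ with the Thom spectrum $\Map(S^{d-1},N)^{-TN}$: the other two steps are a direct application of the main theorem and a citation of non-abelian Poincaré duality, whereas the Hochschild-cohomology side requires the Koszul-duality/Poincaré-duality argument for $\oper{E}_n$-centers, which is the technical heart of the corollary.
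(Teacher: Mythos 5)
Your proposal is correct and matches the paper's argument: take the $\oper{E}_{d-1}$-algebra $A=\Sigma^\infty_+\Omega^{d-1}N$, apply Corollary~\ref{mainbis} to get the $M_\tau^{\circlearrowright}\oMod$-action on $(\on{HH}_{\oper{E}_{d-1}}(A),\int_M A)$, and then identify the two sides using non-abelian Poincar\'e duality (for which the paper cites \cite{francisfactorization}) and the identification of the higher Hochschild cohomology with the Thom spectrum of the free sphere-mapping space (for which the paper cites \cite{kleinfiber} rather than sketching the Koszul/Poincar\'e-duality argument as you do). The only substantive difference is cosmetic: the paper's own proof writes $R=\Sigma^\infty_+\Omega^{d}N$ and $\on{HH}_{\oper{E}_d}(R)\simeq\Map(S^d,N)^{-TN}$, but for the dimensions to agree with the statement (and with Corollary~\ref{mainbis}, which concerns $\oper{E}_{d-1}$-algebras and $(d-1)$-manifolds) one must take $\Omega^{d-1}N$ and $S^{d-1}$ exactly as you did, so you have in effect silently corrected an off-by-one typo in the printed proof.
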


\begin{proof}
Let $R=\Sigma^{\infty}_+\Omega^dN$. $R$ is an $\oper{E}_d$-algebra in $\Spec$. It is proved in \cite{kleinfiber} that 
\[\HC{\oper{E}_d}(R)\simeq \Map(S^{d},N)^{-TN}\]
Similarly, it is proved in \cite{francisfactorization} that \[\int_MR\simeq\Sigma^{\infty}_+\Map(M,N)\]
The result is then a direct corollary of \ref{mainbis}.
\end{proof}

\begin{rem}
This result remains true if $N$ is a Poincaré duality space.
\end{rem}

\subsection*{Remark about the case of chain complexes}

It is desirable to have a version of our theorem when $\cat{C}$ is the category of unbounded chain complexes. If $R$ is a $\mathbb{Q}$-algebra, then the category $\on{Ch}_*(R)$ is a symmetric monoidal model category enriched over itself (but not a simplicial model category). Let us denote by $C_*(-;R)$ a lax symmetric monoidal functor from the category of topological spaces to the category of chain complexes over $R$ such that the homology of $C_*(X;R)$ is naturally isomorphic to the homology of $X$ with coefficients in $R$ (for instance one can take the singular chains over $R$. 

For $\oper{O}$ any operad in topological spaces, $C_*(\oper{O})$ is an operad in $\on{Ch}_*(R)$ and it has been shown by Hinich in \cite{hinichrectification} that the category of $C_*(\oper{O})$-algebra in $\on{Ch}_*(R)$ admits a transferred model structure.

For a $C_*(\oper{E}_d)$-algebra $A$ in $\on{Ch}_*(R)$, one can define factorization homology as the enriched coend
\[\int_MA:=C_*(\Emb^f(-,M))\otimes^{\L}_{C_*(\cat{E}_d)}A\]
and similarly in the case of manifolds with boundary. In the end one proves the following theorem exactly as \ref{main}.

\begin{theo}
Let $(B,A)$ be an algebra over the operad $C_*(\oper{E}_d^\partial)$ in the category $\on{Ch}_*(R)$. Let $M$ be a framed $(d-1)$-manifold and $\tau$ be the product framing on $TM\oplus\mathbb{R}$. The pair $(B,\int_M A)$ is weakly equivalent to an algebra over the operad $C_*(M_\tau^{\circlearrowright}\oMod)$.
\end{theo}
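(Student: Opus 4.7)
The plan is to mimic the proof of Theorem \ref{main} with the simplicial enrichment replaced by a chain-complex enrichment via the lax symmetric monoidal functor $C_*(-;R)$. First, apply $C_*$ to the simplicially enriched symmetric monoidal category $f\Mfld_d^\partial$. Because $C_*$ is lax symmetric monoidal, it converts simplicial operads into operads in $\on{Ch}_*(R)$, converts simplicially enriched symmetric monoidal categories into $\on{Ch}_*(R)$-enriched ones, and preserves the underlying symmetric monoidal structure. In particular $C_*(f\Mfld_d^\partial)$ is an $\on{Ch}_*(R)$-enriched symmetric monoidal category containing $C_*(\cat{E}_d^\partial)$ as a full symmetric monoidal subcategory.

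Next, starting from the $C_*(\oper{E}_d^\partial)$-algebra $(B,A)$, define factorization homology by the derived enriched coend $\int_N(B,A) := C_*(\Emb_f^\partial(-,N)) \otimes^{\L}_{C_*(\cat{E}_d^\partial)} Q(B,A)$, with $Q(B,A)$ a cofibrant replacement in the transferred model structure of Hinich. As in Definition \ref{factorization} and the discussion following Corollary \ref{nb}, this assembles into a symmetric monoidal functor $C_*(f\Mfld_d^\partial) \to \on{Ch}_*(R)$; the key inputs, namely the monoidality of the coend (Proposition \ref{prop-monoidality of coend}) and the comparison of operadic and categorical left Kan extensions (Proposition \ref{Operadic vs categorical}), are formal consequences of the enriched Yoneda lemma and the bar construction, both of which make sense and behave correctly in any closed symmetric monoidal model category with good bar resolutions, hence in $\on{Ch}_*(R)$.

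Now follow the proof of Theorem \ref{main} verbatim. The inclusion of the sub-operad $M_\tau^{\circlearrowright}\oMod \hookrightarrow f\oMfld_d^\partial$ on the colors $\{D, M\times[0,1)\}$ yields, after applying $C_*$, a map $C_*(M_\tau^{\circlearrowright}\oMod) \to C_*(f\oMfld_d^\partial)$ landing in the endomorphism operad of $(D, M\times[0,1))$ inside $C_*(f\Mfld_d^\partial)$. Composing with the symmetric monoidal functor $\int_{-}(B,A)$, the pair $\bigl(\int_D(B,A), \int_{M\times[0,1)}(B,A)\bigr)$ becomes an algebra over $C_*(M_\tau^{\circlearrowright}\oMod)$. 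Finally, one identifies $\int_D(B,A) \cong B$ by the enriched Yoneda lemma and $\int_{M\times[0,1)}(B,A) \simeq \int_M A$ using the chain-complex analogue of Proposition \ref{coro}.

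The main obstacle, and the place where some care is needed, is verifying that the structural results proved earlier in the simplicial setting -- in particular Propositions \ref{invariance of operadic coend}, \ref{Operadic vs categorical}, and \ref{coro} together with Theorem \ref{lurie} for expressing factorization homology as a homotopy colimit -- really do carry over to $\on{Ch}_*(R)$ with the Hinich model structure. The formal arguments (bar construction, reflexive coequalizer manipulations, Yoneda, Day convolution) are insensitive to the change of enrichment; the genuinely new input is that cofibrant $C_*(\oper{E}_d^\partial)$-algebras are colorwise cofibrant in $\on{Ch}_*(R)$, which is guaranteed by Hinich \cite{hinichrectification} over a $\mathbb{Q}$-algebra $R$ since every operad in $\on{Ch}_*(R)$ is then admissible. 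Once this is granted, the whole chain of equivalences used in the proof of Theorem \ref{main} goes through unchanged.
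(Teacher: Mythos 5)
Your proposal is correct and takes essentially the same route as the paper, which at this point is deliberately terse (it literally says ``one proves the following theorem exactly as \ref{main}'' after setting up the enriched coend). You have usefully made explicit which structural inputs --- monoidality of the coend, the operadic-versus-categorical Kan extension comparison, the hocolim description over $\cat{D}(M)$, and colorwise cofibrancy of cofibrant $C_*(\oper{E}_d^\partial)$-algebras via Hinich over a $\mathbb{Q}$-algebra --- must be transported from the simplicial to the $\on{Ch}_*(R)$-enriched setting, which is exactly the content the paper leaves implicit.
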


\begin{rem}
If $R$ is not a $\mathbb{Q}$-algebra, then the category of $C_*(\oper{O})$-algebras cannot necessarily be given the transferred model structure. It has been shown by Fresse in \cite{fressemodules} that there is still a left model structure. We are confident that up to minor modifications, our result remains true in this situation as well.
\end{rem}

\bibliographystyle{alpha}
\bibliography{biblio}

\begin{thebibliography}{HKR09}

\bibitem[AFT12]{ayalastructured}
D.~Ayala, J.~Francis, and H.~Tanaka.
\newblock Structured singular manifolds and factorization homology.
\newblock {\em arXiv preprint arXiv:1206.5164}, 2012.

\bibitem[And10]{andrademanifolds}
R.~Andrade.
\newblock {\em From manifolds to invariants of {$E_n$}-algebras}.
\newblock PhD thesis, Massachusetts Institute of Technology, 2010.

\bibitem[BM05]{bergerresolution}
C.~Berger and I.~Moerdijk.
\newblock Resolution of coloured operads and rectification of homotopy
  algebras.
\newblock {\em arXiv preprint math/0512576}, 2005.

\bibitem[Cal13]{calaquearound}
D.~Calaque.
\newblock Around hochschild (co)homology.
\newblock {\em Habilitation thesis}, 2013.

\bibitem[Cer61]{cerftopologie}
J.~Cerf.
\newblock Topologie de certains espaces de plongements.
\newblock {\em Bulletin de la S.M.F.}, 89:227--380, 1961.

\bibitem[Cer62]{cerftheoremes}
J.~Cerf.
\newblock Th{\'e}or{\`e}mes de fibration des espaces de plongements.
  {A}pplications.
\newblock {\em S{\'e}minaire Henri Cartan}, 15:1--13, 1962.

\bibitem[CM11]{cisinskidendroidal}
D.-C. Cisinski and I.~Moerdijk.
\newblock Dendroidal sets and simplicial operads.
\newblock {\em arXiv preprint arXiv:1109.1004}, 2011.

\bibitem[Fra12]{francisfactorization}
J.~Francis.
\newblock Factorization homology of topological manifolds.
\newblock {\em arXiv preprint arXiv:1206.5522}, 2012.

\bibitem[Fre09]{fressemodules}
B.~Fresse.
\newblock {\em Modules over operads and functors}, volume 1967.
\newblock Springer, 2009.

\bibitem[Gin13]{ginotnotes}
G.~Ginot.
\newblock Notes on factorization algebras, factorization homology and
  applications.
\newblock 2013.

\bibitem[Hin13]{hinichrectification}
V.~Hinich.
\newblock Rectification of algebras and modules.
\newblock {\em arXiv preprint arXiv:1311.4130}, 2013.

\bibitem[Hir76]{hirschdifferential}
M.~Hirsch.
\newblock {\em Differential {T}opology}.
\newblock Graduate Texts in Mathematics. Springer, 1976.

\bibitem[HKR09]{hochschilddifferential}
G.~Hochschild, B.~Kostant, and A.~Rosenberg.
\newblock Differential forms on regular affine algebras.
\newblock {\em Collected Papers}, pages 265--290, 2009.

\bibitem[Isa09]{isaacsoncubical}
S.~Isaacson.
\newblock {\em Cubical homotopy theory and monoidal model categories}.
\newblock PhD thesis, Harvard University Cambridge, Massachusetts, 2009.

\bibitem[Isa11]{isaacsonsymmetric}
S.~Isaacson.
\newblock Symmetric cubical sets.
\newblock {\em Journal of Pure and Applied Algebra}, 215(6):1146--1173, 2011.

\bibitem[Kle06]{kleinfiber}
J.~Klein.
\newblock Fiber products, {P}oincar{\'e} duality and {A}$_\infty$-ring spectra.
\newblock {\em Proceedings of the American Mathematical Society},
  134(6):1825--1833, 2006.

\bibitem[KS09]{kontsevichnotes}
M.~Kontsevich and Y.~Soibelman.
\newblock Notes on {A}$_\infty$-algebras, {A}$_\infty$-categories and
  non-commutative geometry.
\newblock {\em Homological mirror symmetry}, pages 1--67, 2009.

\bibitem[Lur11]{luriehigher}
J.~Lurie.
\newblock Higher algebra.
\newblock {\em preprint}, 2011.

\bibitem[PS14]{pavlovsymmetric}
D.~Pavlov and J.~Scholbach.
\newblock Symmetric operads in abstract symmetric spectra.
\newblock {\em arXiv preprint arXiv:1410.5699}, 2014.

\bibitem[Rob11]{robertsonhomotopy}
M.~Robertson.
\newblock The homotopy theory of simplicially enriched multicategories.
\newblock {\em arXiv preprint arXiv:1111.4146}, 2011.

\bibitem[Sch07]{schwedeuntitled}
S.~Schwede.
\newblock An untitled book project about symmetric spectra.
\newblock {\em preprint}, 2007.

\bibitem[Tho10]{thomaskontsevich}
J.~Thomas.
\newblock Kontsevich's swiss cheese conjecture.
\newblock {\em arXiv preprint arXiv:1011.1635}, 2010.

\bibitem[Vor99]{voronovswiss}
A.~Voronov.
\newblock The {S}wiss-cheese operad.
\newblock {\em Contemporary Mathematics}, 239:365--374, 1999.

\end{thebibliography}

\end{document}